\newtheorem{theorem}{Theorem}
\newtheorem{lemma}[theorem]{Lemma}
\newtheorem{proposition}[theorem]{Proposition}
\theoremstyle{definition}
\newtheorem{definition}[theorem]{Definition}
\theoremstyle{remark}
\newtheorem{remark}[theorem]{Remark}
\newtheorem{corollary}[theorem]{Corollary}
\numberwithin{theorem}{section}
\lstdefinestyle{derivation} {
    breakatwhitespace=false,         
    breaklines=true,                 
    captionpos=b,                    
    keepspaces=true,                 
    numbers=left,                    
    numbersep=5pt,
    escapechar=\&
}
\begin{document}

\title{A note on strong axiomatization of G\"odel Justification Logic}

\author{Nicholas Pischke}
\address{Hoch-Weiseler Str. 46, Butzbach, 35510, Hesse, Germany}
\email{pischkenicholas@gmail.com}

\keywords{justification logic, modal logics, G\"odel logic, many-valued logic}

\begin{abstract}
Justification logics are special kinds of modal logics which provide a framework for reasoning about \emph{epistemic justifications}. For this, they extend classical boolean propositional logic by a family of necessity-style modal operators ``$t:$``, indexed over $t$ by a corresponding set of justification terms, which thus explicitly encode the \emph{justification} for the necessity assertion in the syntax. With these operators, one can therefore not only reason about modal effects on propositions but also about dynamics inside the justifications themselves. We replace this classical boolean base with G\"odel logic, one of the three most prominent fuzzy logics, i.e. special instances of many-valued logics, taking values in the unit interval $[0,1]$, which are intended to model inference under vagueness. We extend the canonical possible-world semantics for justification logic to this fuzzy realm by considering fuzzy accessibility- and evaluation-functions evaluated over the minimum t-norm and establish strong completeness theorems for various fuzzy analogies of prominent extensions for basic justification logic.
\end{abstract}

\maketitle

\section{Introduction}
Epistemology and its core notions like knowledge, belief, truth and justification have, since Hintikka's seminal work \cite{Hin1962}, found natural formalizations in the realm of modal logics and in their fundamental semantical interpretation over Kripke's possible-world models. The formal development of justification logic began with the so called \emph{logic of proofs} which extends basic propositional logic by a family of modal, necessity-style unary operators, introducing formulas of the form $t:\phi$, where the indexing parameter $t$ ranges over a corresponding set of \emph{proof terms}. This logic was devised by Artemov in \cite{Art1995}, \cite{Art2001}, to provide an arithmetic provability semantics for intuitionistic logic and bridge intuitionistic logic, the modal logic $\mathcal{S}4$ and formal arithmetic, a possibility anticipated by G\"odel in \cite{Goe1933}, \cite{Goe1938} where he provided an embedding of intuitionistic logic into $\mathcal{S}4$ and conceptualized the provability interpretation of the modality $\Box$ in $\mathcal{S}4$. However, an explicit embedding of $\mathcal{S}4$ into formal arithmetic was still left out. The missing link was then provided by the logic of proofs $\mathcal{LP}$, for which Artemov provided an embedding into formal arithmetic via the \emph{Arithmetic Completeness Theorem} and an embedding of $\mathcal{S}4$ into $\mathcal{LP}$ via \emph{Artemov's Realization Theorem}, assigning explicit \emph{proof terms} to necessity-statements, which forms an intricate relation between classical modal logics and justification logics.\\

From a modern perspective, $\mathcal{LP}$ is just one of various logical systems in the framework of justification logic, similarly to $\mathcal{S}4$'s position inside the common framework of classical modal logics. Kripke-style possible-world semantics for the logic of proofs was introduced in \cite{Fit2003}, \cite{Fit2005}, with the structures prominently called \emph{Fitting models}, and later naturally extended to the various other representatives of the class of justification logics. As later observed, the Realization Theorem also extends to other representatives of the respective frameworks of classical modal and justification logics. For a comprehensive overview over the framework of justification logics, see e.g. \cite{Art2008}.\\

G\"odel logic on the other hand is a very prominent example of a many-valued logic, with evaluations into the unit interval $[0,1]$, dating back to a paper of G\"odel \cite{Goe1932} where he introduced finite valued versions to provide an infinite family of logics between intuitionistic and classical logic with respect to expressive strength. The version taking values in the unit interval $[0,1]$ was first studied by Dummett in \cite{Dum1959} who also provided the first proof calculus in terms of a simple extension of a common Hilbert calculus for intuitionistic logic. A first-order variant of this infinitely-valued version was followingly studied by Horn \cite{Hor1969}(see also \cite{BPZ2007}). Besides of this intuitionistic access to G\"odel logic, a prominent different approach, and the one followed in this paper, is given via the route of mathematical fuzzy logic, deriving from the concept of fuzzy sets originating in Zadeh's landmark work \cite{Zad1965}, and originating mainly from the seminal monograph of H\'ajek \cite{Haj1998}. Semantically, fuzzy logics are defined over the notion of a t-norm (triangular norm, see e.g. \cite{KMP2000}) as a truth function for conjunction, where G\"odel logic results in the case of considering the minimum t-norm as a particular choice.\\

From an application perspective, it seems very natural to combine justification logics with a notion of vagueness (to model e.g. uncertain epistemic assertions) and thus in a more explicit manner to combine fuzzy logic and particularly G\"odel logic with justification logic in the sense of adding justification modalities to the underlying propositional language and evaluating them in a many-valued context. Similarly as fuzzy logic has proved itself to be one of the right tools to express and analyze vague propositional assertions, we believe that fuzzy justification logic shall be a right tool to model uncertain (explicit) epistemic assertions.

Examples for combinations of justification logic with other systems (or notions) of vagueness (or probability) include Milnikel's logic of uncertain justifications \cite{Mil2014} where Milnikel introduces a graded justification operator $t:_r\phi$ for $r\in\mathbb{Q}\cap [0,1]$ with the intended meaning of $r$ being the least degree of confidence in ``$t$ being a justification for $\phi$``; and recently also the development of probabilistic justification logic, see e.g. \cite{KOS2016}, and possibilistic justification logic, see e.g. \cite{FL2015}.\\

At the time, the only literature on \emph{fuzzy} justification logics is Ghari's work in \cite{Gha2014} and \cite{Gha2016} where he introduced the notion of fuzzy Fitting models (with a crisp accessibility function) for respective t-norms, here later redefined for the case of G\"odel logic, and especially investigated the extension of Pavelka-style fuzzy logic. However, the study of models with graded accessibility functions and the derivation of strong completeness theorems was still left open in any case.\\

In this note, we concretize and expand the work of Ghari in \cite{Gha2014} and thus continue to expand the realm of fuzzy justification logics. We focus on G\"odel logics as an underlying base logic and introduce respective fuzzy Fitting models with \emph{fuzzy} accessibility functions. The main part is occupied with the study of fuzzy analogies of the most prominent justification logics and their semantics as well as their axiomatizations where we establish strong completeness results in every case. To approach this, we translate formulas containing modalities into an augmented basic propositional language and use the strong standard completeness of the underlying propositional systems. In general, we rely on various concepts of standard G\"odel modal logic, i.e. propositional G\"odel logic expanded by a classical necessity and possibility modality $\Box$ and $\Diamond$, originating in \cite{CR2009},\cite{CR2010} and \cite{CR2015}, (studying the necessity and possibility fragments as well as the Bi-modal variant respectively), where especially the structure of our approach to proving strong standard completeness is derived from. For a comprehensive overview over modal fuzzy logic and related advancements to completeness results for various modal fuzzy logics over models with a crisp accessibility function, see also e.g. \cite{Vid2015}. However, we do not recap the main notions from these works as they will be introduced in their adapted form for the case of fuzzy (G\"odel) justification logic during the note as appropriate. In the end, we give some starting points for directions of future work.
\section{Preliminaries}
The basis for our further investigations is propositional $[0,1]$-valued G\"odel logic. Formally, for this we fix a standard propositional language
\[
\mathcal{L}_0(X):\phi::=\bot\mid p\mid(\phi\land\phi)\mid(\phi\rightarrow\phi)
\]
with $p\in X$ for a countably infinite set of variables $X$. We may fix a \emph{standard} set of propositional variables with $Var:=\{p_i\mid i\in\mathbb{N}\}$. As an abbreviation, we then also write $\mathcal{L}_0:=\mathcal{L}_0(Var)$. As usual in propositional logics, we omit the outermost brackets if convenient. Other common connectives are introduced as abbreviations, i.e. we set
\begin{enumerate}
\item $\neg\phi:=(\phi\rightarrow\bot)$,
\item $\phi\leftrightarrow\psi:=(\phi\rightarrow\psi)\land(\psi\rightarrow\phi)$,
\item $\phi\lor\psi:=((\phi\rightarrow\psi)\rightarrow\psi)\land ((\psi\rightarrow\phi)\rightarrow\phi)$,
\item $\top:=(\bot\rightarrow\bot)$.
\end{enumerate}
Semantics for propositional G\"odel logics is defined via truth value assignments in the unit interval $[0,1]$, where conjunction is evaluated over the minimum t-norm $\min\{x,y\}$ for $x,y\in [0,1]$, in the following denoted with $x\odot y$, and implication over its residuum $\Rightarrow$, that is the \emph{unique} function $\Rightarrow:[0,1]\times [0,1]\to [0,1]$ satisfying
\[
x\odot y\leq z\text{ iff }x\leq y\Rightarrow z.
\]
In the case of the minimum t-norm, the residuum has the following description:
\[
x\Rightarrow y=\begin{cases}y,&\text{ if }x>y\\1,&\text{ otherwise}\end{cases}.
\]
\begin{definition}
A \emph{propositional assignment} is a function $e:X\to [0,1]$. This function can be naturally extended to a propositional G\"odel evaluation over $\mathcal{L}_0(X)$ by the following recursive definitions:
\begin{itemize}
\item $e(\bot)=0$,
\item $e(\phi\land\psi)=e(\phi)\odot e(\psi)$,
\item $e(\phi\rightarrow\psi)=e(\phi)\Rightarrow e(\psi)$.
\end{itemize}
\end{definition}
An evaluation $e$ may also be extended to sets of formulas $\Gamma$ via
\[
e(\Gamma):=\inf\{e(\phi)\mid\phi\in\Gamma\},
\]
where the infimum of the empty set is defined to be $1$. For the derived connectives, simplifying the corresponding evaluations yield the following expressions:
\begin{enumerate}
\item $e(\neg\phi)=\begin{cases}1,&\text{ if }e(\phi)=0\\0,&\text{ otherwise}\end{cases}$,
\item $e(\phi\leftrightarrow\psi)=\begin{cases}1,&\text{ if }e(\phi)=e(\psi)\\e(\phi)\odot e(\psi),&\text{ otherwise}\end{cases}$,
\item $e(\phi\lor\psi)=\max\{e(\phi),e(\psi)\}$,
\item $e(\top)=1$.
\end{enumerate}
The resulting truth functions corresponding to $\neg,\leftrightarrow$ and $\lor$ are respectively denoted by $\sim,\Leftrightarrow$ and $\oplus$. With $\mathsf{Ev}(\mathcal{L}_0(X))$, we denote the set of all evaluations of $\mathcal{L}_0(X)$ into $[0,1]$, that is the set of all maps $e:X\to [0,1]$, extended to $\mathcal{L}_0(X)$ by the above definition.\\

Before proceeding with other semantic notions, we want to note some properties of the minimum t-norm $\odot$ and its derived functions.
\begin{lemma}\label{lem:mintnormprop}
Let $x,y,z,x',y'\in [0,1]$:
\begin{enumerate}
\item If $x\leq x'$, $y\leq y'$, then $x\odot y\leq x'\odot y'$.
\item If $y\leq y'$, then $x\Rightarrow y\leq x\Rightarrow y'$.
\item If $x\geq x'$, then $x\Rightarrow y\leq x'\Rightarrow y$.
\item If $x\geq x'$, then $\sim x\leq\sim x'$.
\end{enumerate}
\end{lemma}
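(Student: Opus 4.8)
The plan is to establish (1) directly from the definition of the minimum t-norm and then to derive (2)--(4) uniformly from the residuation (adjunction) property $x\odot y\le z\iff x\le y\Rightarrow z$, rather than grinding through the piecewise formula for $\Rightarrow$ in each case; this keeps the argument short and reusable. An alternative, equally valid route would be a direct case distinction on the two clauses in the explicit description of $\Rightarrow$, which I would fall back on only if the adjunction route proved awkward.

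For (1), note that $x\odot y=\min\{x,y\}\le x\le x'$ and likewise $\min\{x,y\}\le y\le y'$, so $\min\{x,y\}$ is a lower bound for $\{x',y'\}$ and hence $\min\{x,y\}\le\min\{x',y'\}=x'\odot y'$; in particular $\odot$ is monotone and commutative, which I will use below. The one auxiliary fact I would record first is the counit inequality $(x\Rightarrow y)\odot x\le y$, obtained by feeding the reflexivity $x\Rightarrow y\le x\Rightarrow y$ into the adjunction with $u=x\Rightarrow y$, $v=x$, $w=y$.

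For (2), assuming $y\le y'$, I would apply the adjunction in the form $(x\Rightarrow y)\le(x\Rightarrow y')\iff(x\Rightarrow y)\odot x\le y'$; since $(x\Rightarrow y)\odot x\le y\le y'$ by the counit inequality and transitivity, the right-hand side holds and we are done. For (3), assuming $x\ge x'$, the adjunction reduces $(x\Rightarrow y)\le(x'\Rightarrow y)$ to $(x\Rightarrow y)\odot x'\le y$; by monotonicity of $\odot$ from (1) together with $x'\le x$ we get $(x\Rightarrow y)\odot x'\le(x\Rightarrow y)\odot x\le y$, again using the counit inequality. Finally, for (4) I would observe that $\sim x=x\Rightarrow 0$ (this is exactly the evaluation clause for negation), so (4) is the special case $y=0$ of (3).

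There is no substantial obstacle here: every step is a one-line consequence of the adjunction and part (1). The only point requiring a little care is bookkeeping in the adjunction---making sure the arguments are placed in the correct slots of $u\odot v\le w\iff u\le v\Rightarrow w$---and, should one instead use the explicit piecewise formula, correctly handling the boundary cases where the two arguments of $\Rightarrow$ coincide.
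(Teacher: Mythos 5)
Your proof is correct in every step: part (1) is the standard lower-bound argument for $\min$, the counit inequality $(x\Rightarrow y)\odot x\le y$ follows exactly as you say from reflexivity plus the adjunction, and (2), (3), (4) then reduce cleanly to it, with (4) indeed being the instance $y=0$ of (3) since $\sim x = x\Rightarrow 0$. Note that the paper itself gives no proof of this lemma --- it is dismissed as ``very elementary and thus omitted,'' presumably with the direct piecewise verification of $\Rightarrow$ in mind --- so there is nothing to compare against line by line. What your route buys over that implicit one is generality and reusability: your argument never uses the explicit formula for the residuum of $\min$, only the defining adjunction $x\odot y\le z$ iff $x\le y\Rightarrow z$ and monotonicity of $\odot$, so it establishes the lemma verbatim for \emph{any} left-continuous t-norm and its residuum (e.g.\ \L ukasiewicz or Product), not just the G\"odel case. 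The trade-off is that the piecewise check is arguably more self-contained for a reader who has only just seen the definition of $\Rightarrow$ and has not yet internalized the adjunction; but as a proof it is in no way more rigorous than yours. No gaps.
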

The proof is very elementary and thus omitted here.\\

From these definitions regarding semantic evaluations, analogues for the case of G\"odel logic of other common semantical notions can now be derived.
\begin{definition}
Let $\Gamma\cup\{\phi\}\subseteq\mathcal{L}_0(X)$. Then we say that
\begin{enumerate}
\item $\Gamma$ \emph{entails} $\phi$, $\Gamma\models_{\leq}\phi$, iff $\forall e\in\mathsf{Ev}(\mathcal{L}_0(X)):e(\Gamma)\leq e(\phi)$,
\item $\Gamma$ \emph{1-entails} $\phi$, $\Gamma\models\phi$, iff $\forall e\in\mathsf{Ev}(\mathcal{L}_0(X)):e(\psi)=1$ for all $\psi\in\Gamma$ implies $e(\phi)=1$.
\end{enumerate}
\end{definition}
However, as observed by Baaz and Zach, these two notions of semantic inference coincide (for countable sets of premises).
\begin{lemma}[Baaz, Zach \cite{BZ1998}]
For any $\Gamma\cup\{\phi\}\subseteq\mathcal{L}_0(X)$: $\Gamma\models_\leq\phi$ iff $\Gamma\models\phi$.
\end{lemma}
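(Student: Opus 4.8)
The plan is to prove the two implications separately, with the forward direction $\Gamma\models_{\leq}\phi\Rightarrow\Gamma\models\phi$ being essentially immediate and the converse carrying all the work. For the forward direction I would simply observe that if $e(\psi)=1$ for every $\psi\in\Gamma$, then $e(\Gamma)=\inf\{e(\psi)\mid\psi\in\Gamma\}=1$, so $\Gamma\models_{\leq}\phi$ forces $1=e(\Gamma)\leq e(\phi)$ and hence $e(\phi)=1$; as $e$ was arbitrary this gives $\Gamma\models\phi$.

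For the converse I would argue contrapositively: assuming $\Gamma\not\models_{\leq}\phi$, fix $e\in\mathsf{Ev}(\mathcal{L}_0(X))$ with $s:=e(\Gamma)>e(\phi)=:r$ (note $s>0$ since $r\geq 0$), and manufacture a new evaluation $e'$ witnessing $\Gamma\not\models\phi$, i.e. one with $e'(\psi)=1$ for all $\psi\in\Gamma$ yet $e'(\phi)<1$. The key idea is that all Gödel truth functions ($\odot$, $\Rightarrow$, and their derived companions) are determined purely by the linear order on $[0,1]$ together with the endpoints $0$ and $1$; consequently, postcomposing an evaluation with a suitably chosen monotone self-map of $[0,1]$ should again yield an evaluation. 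Concretely I would take the threshold map $h\colon[0,1]\to[0,1]$ sending $x\mapsto x/s$ for $x<s$ and $x\mapsto 1$ for $x\geq s$, and let $e'$ be the propositional assignment $p\mapsto h(e(p))$ extended to $\mathcal{L}_0(X)$.

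The technical heart is to show $e'(\chi)=h(e(\chi))$ for every $\chi\in\mathcal{L}_0(X)$, proved by induction on the structure of $\chi$. The cases $\bot$ and $\land$ are routine, since $h(0)=0$ and $h$ non-decreasing gives $h(\min\{a,b\})=\min\{h(a),h(b)\}$. The delicate case — and the step I expect to be the main obstacle — is implication, where I must verify $h(a\Rightarrow b)=h(a)\Rightarrow h(b)$ for all $a,b\in[0,1]$. This is exactly where the shape of $h$ is essential: for $a\leq b$ it holds because $a\Rightarrow b=1$ and $h(1)=1$ while $h(a)\leq h(b)$ forces $h(a)\Rightarrow h(b)=1$; for $a>b$ one uses that $h$ is \emph{strictly} increasing on $[0,s)$ and constantly $1$ on $[s,1]$, so that $h(a)>h(b)$ whenever $b<s$ and $h(a)=h(b)=1$ whenever $b\geq s$, and a short case distinction then yields $h(a)\Rightarrow h(b)=h(b)=h(a\Rightarrow b)$. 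Granting this induction, every $\psi\in\Gamma$ satisfies $e(\psi)\geq s$, whence $e'(\psi)=h(e(\psi))=1$, whereas $e'(\phi)=h(r)=r/s<1$ as $r<s$, contradicting $\Gamma\models\phi$. Finally I would remark that the countability caveat is automatic in this setting, since $X$, and therefore $\mathcal{L}_0(X)$ and any $\Gamma\subseteq\mathcal{L}_0(X)$, is countable, so no extra hypothesis is needed.
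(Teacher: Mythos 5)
Your proof is correct. Note that the paper does not prove this lemma at all --- it is imported verbatim from Baaz--Zach \cite{BZ1998} --- so there is no internal proof to compare against; your argument is a sound, self-contained reconstruction, and it follows what is essentially the standard technique for this result: postcomposing a counter-evaluation with a truth-value ``squashing'' map that is the identity structure below the threshold $s=e(\Gamma)$ and collapses $[s,1]$ to $1$. The two nontrivial points both check out: the map $h$ really is an endomorphism of the standard G\"odel algebra (your case analysis for $h(a\Rightarrow b)=h(a)\Rightarrow h(b)$ is complete, the crucial facts being that $h$ is strictly increasing on $[0,s)$ and constant $1$ on $[s,1]$), and the induction transferring $e'(\chi)=h(e(\chi))$ from variables to all formulas is routine given that. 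Your closing remark is also right: since $X$ is countably infinite, $\mathcal{L}_0(X)$ and hence every $\Gamma$ is countable, so the countability proviso attached to the citation is vacuous here --- indeed, your construction never invokes countability, which is consistent with the fact that the caveat in Baaz--Zach pertains to their more general setting rather than to standard $[0,1]$-valued G\"odel logic.
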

Through the approach to G\"odel logics via the framework of fuzzy logics, we consider an extension of H\'ajek's proof calculus for basic fuzzy logic $\mathbf{BL}$ \cite{Haj1998} by the idempotency axiom for conjunction as the corresponding proof calculus for axiomatizing the above defined semantic consequence relation of basic propositional G\"odel logic.
\begin{definition}
Let $\mathcal{G}$ be the Hilbert-style calculus given by the following axiom schemes\footnote{The numbering follows H\'ajek's original presentation in \cite{Haj1998}.} and rules:
\begin{description}
\item [($A1$)] $(\phi\rightarrow\psi)\rightarrow((\psi\rightarrow\chi)\rightarrow(\phi\rightarrow\chi))$
\item [($A2$)] $(\phi\land\psi)\rightarrow\phi$
\item [($A3$)] $(\phi\land\psi)\rightarrow(\psi\land\phi)$
\item [($A5a$)] $(\phi\rightarrow(\psi\rightarrow\chi))\rightarrow((\phi\land\psi)\rightarrow\chi)$
\item [($A5b$)] $((\phi\land\psi)\rightarrow\chi)\rightarrow(\phi\rightarrow(\psi\rightarrow\chi))$
\item [($A6$)] $((\phi\rightarrow\psi)\rightarrow\chi)\rightarrow(((\psi\rightarrow\phi)\rightarrow\chi)\rightarrow\chi)$
\item [($A7$)] $\bot\rightarrow\phi$
\item [($G4$)] $\phi\rightarrow(\phi\land\phi)$
\item [($MP$)] From $\phi$ and $\phi\rightarrow\psi$, infer $\psi$.
\end{description}
We denote a deduction of $\phi\in\mathcal{L}_0(X)$ in $\mathcal{G}$, or from a set of premises $\Gamma\subseteq\nobreak\mathcal{L}_0(X)$, by $\vdash_\mathcal{G}\phi$ and $\Gamma\vdash_\mathcal{G}\phi$ respectively.
\end{definition}
\begin{lemma}[H\'ajek \cite{Haj1998}]\label{lem:goedellogicthms}
$\mathcal{G}$ proves the following formulas:
\begin{enumerate}
\item $\phi\rightarrow (\psi\rightarrow\phi)$,
\item $\phi\rightarrow\phi$,
\item $\phi\rightarrow(\psi\rightarrow\chi)\rightarrow ((\phi\rightarrow\psi)\rightarrow(\phi\rightarrow\chi))$.
\end{enumerate}
\end{lemma}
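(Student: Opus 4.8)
The plan is to derive all three formulas purely Hilbert-style, using as the main workhorse the derived rule of \emph{transitivity} (suffixing): from $\vdash_{\mathcal{G}}\alpha\rightarrow\beta$ and $\vdash_{\mathcal{G}}\beta\rightarrow\gamma$ one obtains $\vdash_{\mathcal{G}}\alpha\rightarrow\gamma$ by instantiating ($A1$) and applying ($MP$) twice. With this in hand, (2) and (1) are immediate. For (2), I would instantiate ($A1$) at $\phi,\phi\land\phi,\phi$ and discharge its two antecedents by ($G4$), i.e. $\phi\rightarrow(\phi\land\phi)$, and by ($A2$), i.e. $(\phi\land\phi)\rightarrow\phi$, via ($MP$); equivalently, $\phi\rightarrow\phi$ is just transitivity applied to ($G4$) and ($A2$). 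For (1), I would instantiate ($A5b$) with $\chi:=\phi$ to obtain $((\phi\land\psi)\rightarrow\phi)\rightarrow(\phi\rightarrow(\psi\rightarrow\phi))$ and then discharge the antecedent $(\phi\land\psi)\rightarrow\phi$ by ($A2$) and ($MP$).

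The real work is (3), which is self-distribution (the combinator $\mathbf{S}$) and, as is well known, cannot be obtained from weakening, permutation and prefixing alone: it genuinely requires a \emph{contraction} principle. Here contraction is exactly what the idempotency axiom ($G4$) supplies, which is the point at which the argument leaves the weaker fuzzy logics behind. Concretely, I would first assemble three auxiliary schemata. \emph{Permutation}, $(\alpha\rightarrow(\beta\rightarrow\gamma))\rightarrow(\beta\rightarrow(\alpha\rightarrow\gamma))$, follows by currying via ($A5a$), rewriting $(\alpha\land\beta)$ to $(\beta\land\alpha)$ using ($A3$) together with transitivity, and uncurrying via ($A5b$). \emph{Prefixing} as a rule, namely from $\vdash_{\mathcal{G}}\beta\rightarrow\gamma$ infer $\vdash_{\mathcal{G}}(\alpha\rightarrow\beta)\rightarrow(\alpha\rightarrow\gamma)$, follows by applying permutation to ($A1$) so as to swap its two antecedents into the prefixing form $(\beta\rightarrow\gamma)\rightarrow((\alpha\rightarrow\beta)\rightarrow(\alpha\rightarrow\gamma))$ and then using ($MP$). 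Finally \emph{contraction}, $(\phi\rightarrow(\phi\rightarrow\psi))\rightarrow(\phi\rightarrow\psi)$, follows by currying via ($A5a$) to reach $(\phi\land\phi)\rightarrow\psi$ and composing, through transitivity, with ($G4$); this is the only step that genuinely uses ($G4$).

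With these tools, (3) is assembled by a short transitivity chain. Applying the prefixing rule to the prefixing-form instance $(\psi\rightarrow\chi)\rightarrow((\phi\rightarrow\psi)\rightarrow(\phi\rightarrow\chi))$ yields $(\phi\rightarrow(\psi\rightarrow\chi))\rightarrow(\phi\rightarrow((\phi\rightarrow\psi)\rightarrow(\phi\rightarrow\chi)))$; permutation then rewrites its consequent into $(\phi\rightarrow\psi)\rightarrow(\phi\rightarrow(\phi\rightarrow\chi))$; and prefixing applied to contraction collapses the inner part $\phi\rightarrow(\phi\rightarrow\chi)$ down to $\phi\rightarrow\chi$, giving the consequent $(\phi\rightarrow\psi)\rightarrow(\phi\rightarrow\chi)$. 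Composing these three implications by transitivity delivers exactly (3). I expect the bookkeeping in this final chain — tracking which subformula is being rewritten at each step and invoking permutation or prefixing at the correct depth — to be the main obstacle, whereas the individual auxiliary schemata are routine once contraction is in place; the conceptual crux is recognizing that ($G4$) is precisely the contraction principle that makes $\mathbf{S}$ derivable.
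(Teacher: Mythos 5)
Your proposal is correct: every step instantiates exactly as you describe from the axiom list of $\mathcal{G}$, and the full chain checks out — transitivity from ($A1$) and ($MP$); item (2) from ($G4$) and ($A2$); item (1) from ($A5b$) and ($A2$); permutation by currying through ($A5a$), commuting via ($A3$) with ($A1$), and uncurrying via ($A5b$); prefixing by permuting ($A1$); contraction from ($A5a$) composed with ($G4$); and the final three-link transitivity chain assembling item (3). Note, however, that the paper does not prove this lemma at all: it imports it from H\'ajek \cite{Haj1998}, remarking only that (1) and (2) are already theorems of $\mathbf{BL}$ whereas (3) is particular to G\"odel logic. Your derivation is therefore a self-contained replacement for that citation, and its chief merit is that it makes the paper's prose remark precise: in your proof of (3), ($G4$) enters only through the contraction schema $(\phi\rightarrow(\phi\rightarrow\psi))\rightarrow(\phi\rightarrow\psi)$, so idempotency of $\land$ is exactly the contraction principle that makes the $\mathbf{S}$-schema derivable, while permutation and prefixing are available already in $\mathbf{BL}$. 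One small mismatch with the paper's remark (though not with the lemma as stated): your proof of (2) also routes through ($G4$), whereas $\phi\rightarrow\phi$ is provable in $\mathbf{BL}$ without it — e.g. derive $(\psi\land\phi)\rightarrow\phi$ from ($A3$), ($A2$) and transitivity, export by ($A5b$) to get $\psi\rightarrow(\phi\rightarrow\phi)$, then instantiate $\psi$ with any axiom and apply ($MP$) — so if you also want your derivations to witness that (1) and (2) require no idempotency, that single step should be replaced.
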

While item (1) and (2) are even theorems for H\'ajek's basic logic $\mathbf{BL}$, item (3) is a particular feature of G\"odel logic, distinguishing it from the other prominent t-norm based logics. This lemma is also the reason why the usual proof of the classical deduction theorem works in G\"odel logic.
\begin{theorem}[Strong Standard Completeness, H\'ajek \cite{Haj1998}]\label{thm:gssc}
For any\\ $\Gamma\cup\{\phi\}\subseteq\mathcal{L}_0(X)$:
\[
\Gamma\vdash_\mathcal{G}\phi\text{ iff }\Gamma\models\phi.
\]
\end{theorem}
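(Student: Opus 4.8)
The plan is to prove both directions of the biconditional $\Gamma\vdash_\mathcal{G}\phi \Leftrightarrow \Gamma\models\phi$ separately, with soundness being the easy direction and completeness being the main work.

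\emph{Soundness} ($\Gamma\vdash_\mathcal{G}\phi \Rightarrow \Gamma\models\phi$). First I would verify that every axiom $(A1)$ through $(G4)$ evaluates to $1$ under every assignment $e\in\mathsf{Ev}(\mathcal{L}_0(X))$. Each axiom is an implication $\psi\rightarrow\chi$, so by the residuum description it suffices to check $e(\psi)\leq e(\chi)$ in all cases, which reduces to the elementary monotonicity and residuation properties collected in Lemma~\ref{lem:mintnormprop}. Next I would check that modus ponens preserves the property of being evaluated to $1$: if $e(\phi)=1$ and $e(\phi\rightarrow\psi)=1$, then $e(\phi)=1\leq e(\psi)$ forces $e(\psi)=1$ by the residuum clause. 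An induction on the length of a deduction then shows that if $e(\psi)=1$ for all $\psi\in\Gamma$, the same holds for every formula derivable from $\Gamma$, giving $\Gamma\models\phi$.

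\emph{Completeness} ($\Gamma\models\phi \Rightarrow \Gamma\vdash_\mathcal{G}\phi$). This is the substantial direction and the expected main obstacle. The standard route is a Lindenbaum--Henkin construction relative to $\mathcal{G}$. Assuming $\Gamma\not\vdash_\mathcal{G}\phi$, I would first establish the deduction theorem for $\mathcal{G}$ — whose usual proof goes through precisely because of Lemma~\ref{lem:goedellogicthms}(1)--(3), as the excerpt itself notes — and then extend $\Gamma$ to a suitably saturated (linear, prime) theory $T$ with $\phi\notin T$, using axiom $(A6)$ to secure primeness/linearity (this axiom encodes the prelinearity schema characteristic of G\"odel logic). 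The Lindenbaum algebra of $T$ is then a linearly ordered $\mathcal{G}$-algebra, and the key algebraic fact is that every countable such algebra embeds into the standard G\"odel algebra on $[0,1]$ with the minimum t-norm and its residuum; composing the quotient map with this embedding yields an assignment $e$ satisfying $e(\psi)=1$ for all $\psi\in\Gamma$ yet $e(\phi)<1$, contradicting $\Gamma\models\phi$.

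The hard part will be the embedding of the countable linearly ordered Lindenbaum algebra into $[0,1]$: one must produce an order-preserving map that respects both $\odot$ and $\Rightarrow$, exploiting that in a G\"odel algebra the monoid operation is idempotent (forced by $(G4)$) so that $\odot$ collapses to the lattice meet and the residuum is determined order-theoretically. Since this is exactly the standard-completeness argument of H\'ajek for the extension $\mathbf{BL}+(G4)$, I would cite \cite{Haj1998} for the algebraic embedding rather than reproduce it, and restrict attention to verifying that the countability of $Var$ (hence of the term algebra over $\mathcal{L}_0(X)$) ensures the relevant linearly ordered quotient is countable so that the embedding theorem applies.
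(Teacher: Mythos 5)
The paper gives no proof of this theorem at all: it is quoted directly from H\'ajek's monograph \cite{Haj1998}, so there is no internal argument to compare yours against. Your outline is correct and is essentially the cited standard argument---soundness by induction on derivations, then a Lindenbaum--Henkin extension of $\Gamma$ to a prime (linear) theory using the prelinearity axiom ($A6$) together with the deduction theorem, and finally the order-embedding of the resulting countable linearly ordered G\"odel algebra into $[0,1]$, which works precisely because idempotency ($G4$) collapses $\odot$ to the lattice meet and makes the residuum order-definable---so deferring that embedding lemma to \cite{Haj1998} is a reasonable division of labor rather than a circularity, since it is a distinct algebraic statement from the completeness theorem itself.
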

\section{G\"odel-Fitting models}
\begin{definition}
The \emph{language of G\"odel justification logic} $\mathcal{L}_J$ is defined by the BNF
\[
\mathcal{L}_J:\phi::= \bot\mid p\mid(\phi\land\phi)\mid(\phi\rightarrow\phi)\mid t:\phi
\]
with $p\in Var:=\{p_i\mid i\in\mathbb{N}\}$ as before and $t\in Jt$ where
\[
Jt:t::=x\mid c\mid [t\cdot t]\mid [t+t]\mid\; !t\mid\; ?t
\]
with $x\in V:=\{x_i\mid i\in\mathbb{N}\}$ \emph{variable symbols} and $c\in C:=\{c_i\mid i\in\mathbb{N}\}$ \emph{constant symbols}.
\end{definition}
The same rules for simplification of bracketing formulas as well as definitions for derived connectives as presented in the preliminaries apply here.

In practice, there are many variants for possible sets of justification terms, with some extensions and reductions of the set $Jt$ as defined above present. In general, a set of justification terms is expected to at least contain a countable set of variables and constants as well as to be closed under the $\cdot$  and $+$ operations. The $!$-operator, originating from the initial justification logic $\mathcal{LP}$, and the $?$-operator, relating to positive and negative introspection in explicit modal logics respectively, are of greater importance for extensions investigated later. There is, however, no disadvantage in allowing them right away.
\begin{definition}
A \emph{G\"odel justification (or G\"odel-Fitting) model} (over the language $\mathcal{L}_J$) is a structure $\mathfrak{M}=\langle W,R,\mathcal{E},e\rangle$ with
\begin{enumerate}
\item $W$ being a non-empty set, the domain $\mathcal{D}(\mathfrak{M})$ (of $\mathfrak{M}$), 
\item $R:W\times W\to [0,1]$,
\item $\mathcal{E}:W\times Jt\times\mathcal{L}_J\to [0,1]$,
\item $e:W\times Var\to [0,1]$,
\end{enumerate}
where $\mathcal{E}$ satisfies the closure conditions\footnote{These conditions represent natural generalizations of the classical conditions on boolean Fitting models, i.e. restricting $\mathcal{E}$ to $\{0,1\}$ returns them in a translated form.}
\begin{enumerate}[(i)]
\item $\mathcal{E}(w,t,\phi\rightarrow\psi)\odot\mathcal{E}(w,s,\phi)\leq\mathcal{E}(w,t\cdot s,\psi)$,
\item $\mathcal{E}(w,t,\phi)\oplus\mathcal{E}(w,s,\phi)\leq\mathcal{E}(w,t+s,\phi)$,
\end{enumerate}
for all $t,s\in Jt,\phi,\psi\in\mathcal{L}_J$ and $w\in W$.
\end{definition}
Note that, to simplify notation, we omitted the outer square brackets of justification terms inside of the evidence function $\mathcal{E}$ in the previous definition of a G\"odel justification model and continue to do so if the context is clear. The class of all G\"odel justification models is denoted by $\mathsf{GJ}$. We say that a G\"odel justification model is (simply) finite if its domain is finite.

These models are inspired by G\"odel-Kripke models, originally introduced in \cite{CR2009}, \cite{CR2010}, which form a similar fuzzy possible-world semantics for standard G\"odel modal logics.\footnote{The concept of many-valued Kripke models in the context of modal logics, especially with many-valued accessibility functions, was initiated by the work of Fitting in \cite{Fit1991}, \cite{Fit1992} where he studied a variant taking values in a finite lattice.}

We extend the evaluation function $e$ of a $\mathsf{GJ}$-model from $Var$ to the whole language $\mathcal{L}_J$ via the following inductive rules, for each world $w\in W$:
\begin{itemize}
\item $e(w,\bot)=0$,
\item $e(w,\phi\land\psi)=e(w,\phi)\odot e(w,\psi)$,
\item $e(w,\phi\rightarrow\psi)=e(w,\phi)\Rightarrow e(w,\psi)$,
\item $e(w,t:\phi)=\mathcal{E}(w,t,\phi)\odot\inf_{v\in W}\{R(w,v)\Rightarrow e(v,\phi)\}$.
\end{itemize}
\begin{remark}\label{rem:boxabuse}
As an abuse of notation, we write
\[
e(w,\Box\phi):=\inf_{v\in W}\{R(w,v)\Rightarrow e(v,\phi)\}
\]
in connection to standard G\"odel modal logic \cite{CR2010} although of course $\Box\phi$, that is the classical necessity-style operator $\Box$, in general, is not part of the underlying language. Following to this, we may rephrase the definition of the semantic evaluation of $t:\phi$ with $e(w,t:\phi)=\mathcal{E}(w,t,\phi)\odot e(w,\Box\phi)$.
\end{remark}
At a world $w$ in a $\mathsf{GJ}$-model $\mathfrak{M}=\langle W,R,\mathcal{E},e\rangle$, we may also extend $e(w,\cdot)$ to sets of formulas $\Gamma\subseteq\mathcal{L}_J$ with setting
\[
e(w,\Gamma):=\inf_{\psi\in\Gamma}\{e(w,\psi)\}.
\]
A $\mathsf{GJ}$-model $\mathfrak{M}=\langle W,R,\mathcal{E},e\rangle$ is called \emph{accessibility crisp} if $R$ is crisp, i.e. if $R(w,v)\in\{0,1\}$ for all $w,v\in W$. For a class of $\mathsf{GJ}$-models $\mathsf{C}$, we denote the subclass of all accessibility crisp models in $\mathsf{C}$ by $\mathsf{Cc}$. Similarly, as in standard G\"odel modal logics, we may now define the usual semantical notion of (\emph{local}) satisfiability in a model.
\begin{definition}
Let $\mathfrak{M}=\langle W,R,\mathcal{E},e\rangle$ be a $\mathsf{GJ}$-model, $\Gamma\cup\{\phi\}\subseteq\mathcal{L}_J$ and $w\in W$. We say
\begin{enumerate}[(i)]
\item $\mathfrak{M}$ \emph{satisfies} $\phi$ \emph{in} $w$, written $(\mathfrak{M},w)\models\phi$, iff $e(w,\phi)=1$,
\item $\phi$ \emph{is valid in} $\mathfrak{M}$, written $\mathfrak{M}\models\phi$, iff $\forall v\in W: e(v,\phi)=1$,
\end{enumerate}
and similarly for sets $\Gamma$
\begin{enumerate}[(i)]
\setcounter{enumi}{2}
\item $\mathfrak{M}$ \emph{satisfies} $\Gamma$ \emph{in} $w$, written $(\mathfrak{M},w)\models\Gamma$, iff $\forall\psi\in\Gamma:(\mathfrak{M},w)\models\psi$,
\item $\Gamma$ \emph{is valid in} $\mathfrak{M}$, written $\mathfrak{M}\models\Gamma$, iff $\forall\psi\in\Gamma:\mathfrak{M}\models\psi$.
\end{enumerate}
\end{definition}
This yields, similarly to the non-modal propositional case, two analogues for \emph{local} semantic inference in fuzzy Fitting models.
\begin{definition}
Let $\Gamma\cup\{\phi\}\subseteq\mathcal{L}_J$ and $\mathsf{C}$ a class of $\mathsf{GJ}$-models. We say that
\begin{enumerate}
\item $\Gamma$ \emph{entails} $\phi$ \emph{in} $\mathsf{C}$, written $\Gamma\models_{\mathsf{C}\leq}\phi$, if $\forall\mathfrak{M}=\langle W,R,\mathcal{E},e\rangle\in\mathsf{C}:\forall w\in W:e(w,\Gamma)\leq e(w,\phi)$,
\item $\Gamma$ \emph{$1$-entails} $\phi$ \emph{in} $\mathsf{C}$, written $\Gamma\models_\mathsf{C}\phi$, if $\forall\mathfrak{M}=\langle W,R,\mathcal{E},e\rangle\in\mathsf{C}:\forall w\in W:(\mathfrak{M},w)\models\Gamma$ implies $(\mathfrak{M},w)\models\phi$.
\end{enumerate}
\end{definition}
A formula $\phi$ is called \emph{$\mathsf{C}$-valid}, for a class of $\mathsf{GJ}$-models $\mathsf{C}$, if $\varnothing\models_\mathsf{C}\phi$. In this case, we also just write $\models_\mathsf{C}\phi$.
\begin{lemma}\label{lem:consequenceclassimpl}
For any class of $\mathsf{GJ}$-models $\mathsf{C}$ and any $\Gamma\cup\{\phi\}\subseteq\mathcal{L}_J$: $\Gamma\models_{\mathsf{C}\leq}\phi$ implies $\Gamma\models_\mathsf{C}\phi$.
\end{lemma}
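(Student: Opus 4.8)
The plan is to unwind both semantic consequence relations at a fixed world and to observe that $1$-entailment is simply the special case of $\leq$-entailment in which all premises are pinned to the top value $1$. Concretely, I would assume $\Gamma\models_{\mathsf{C}\leq}\phi$ and fix an arbitrary model $\mathfrak{M}=\langle W,R,\mathcal{E},e\rangle\in\mathsf{C}$ together with a world $w\in W$; the goal is then to show that $(\mathfrak{M},w)\models\Gamma$ forces $(\mathfrak{M},w)\models\phi$.

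So suppose $(\mathfrak{M},w)\models\Gamma$, i.e. $e(w,\psi)=1$ for every $\psi\in\Gamma$. The key step is to compute the aggregated value $e(w,\Gamma)=\inf_{\psi\in\Gamma}\{e(w,\psi)\}$: since every member of the set $\{e(w,\psi)\mid\psi\in\Gamma\}$ equals $1$, its infimum is $1$ as well. In the degenerate case $\Gamma=\varnothing$ the infimum over the empty set in $[0,1]$ is the top element $1$, which matches the reading of $\models_{\mathsf{C}}\phi$ as plain $\mathsf{C}$-validity, so this case causes no trouble. Hence $e(w,\Gamma)=1$ in either situation.

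Now the hypothesis $\Gamma\models_{\mathsf{C}\leq}\phi$, applied to this particular $\mathfrak{M}$ and $w$, yields $e(w,\Gamma)\leq e(w,\phi)$, whence $1\leq e(w,\phi)$. As all evaluations take values in $[0,1]$, this forces $e(w,\phi)=1$, i.e. $(\mathfrak{M},w)\models\phi$. Since $\mathfrak{M}\in\mathsf{C}$ and $w\in W$ were arbitrary, $\Gamma\models_{\mathsf{C}}\phi$ follows, as required.

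I do not expect any real obstacle: this is the ``easy'' direction of the correspondence between the two inference notions, mirroring exactly the trivial implication in the propositional Baaz--Zach lemma recalled earlier. The genuinely nontrivial content lies in the converse direction, which would require transferring a strict violation $e(w,\Gamma)>e(w,\phi)$ into a model separating the two premises and conclusion precisely at the value $1$; but since that converse is not asserted in the present statement, the short monotonicity-of-$\inf$ computation above is all that is needed.
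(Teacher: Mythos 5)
Your proof is correct and follows essentially the same route as the paper's: fix a model and world where all premises evaluate to $1$, observe that the infimum $e(w,\Gamma)$ then equals $1$, and apply the $\leq$-entailment hypothesis to force $e(w,\phi)=1$. Your extra remark about the empty-premise case ($\inf\varnothing=1$ in $[0,1]$) is a harmless refinement the paper leaves implicit.
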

\begin{proof}
Let $\mathsf{C}$ be a class of $\mathsf{GJ}$-models and assume $\Gamma\models_{\mathsf{C}\leq}\phi$. Thus,
\[
\forall\mathfrak{M}\in\mathsf{C}:\forall w\in\mathcal{D}(\mathfrak{M}):\inf_{\psi\in\Gamma}\{e(w,\psi)\}\leq e(w,\phi).
\]
Now, let $w\in\mathcal{D}(\mathfrak{M})$ for some $\mathfrak{M}\in\mathsf{C}$ and suppose $(\mathfrak{M},w)\models\Gamma$, i.e. $\forall\psi\in\nobreak\Gamma:e(w,\psi)=1$. Thus $\inf_{\psi\in\Gamma}\{e(w,\psi)\}=1$. By the above, we have $1=\inf_{\psi\in\Gamma}\{e(w,\psi)\}\leq e(w,\phi)\leq 1$, i.e. $e(w,\phi)=1$ and thus $(\mathfrak{M},w)\models\phi$. Thus $\Gamma\models_{\mathsf{C}}\phi$.
\end{proof}
Similarly, as in standard G\"odel modal logics, the converse of this statement will later follow from a strong completeness theorem for various model classes $\mathsf{C}$.\\

The following lemma and its proof are analogies to a similar statement in standard G\"odel modal logic \cite{CR2010}, where the authors proved it for the before mentioned G\"odel-Kripke models over a different language. For the notation here, however, we remind on Rem. \ref{rem:boxabuse}.
\begin{lemma}\label{lem:kaxiomvalid}
For any $\mathsf{GJ}$-model $\mathfrak{M}$, any $w\in\mathcal{D}(\mathfrak{M})$ and any $\phi,\psi\in\mathcal{L}_J$: $e(w,\Box(\phi\rightarrow\psi))\odot e(w,\Box\phi)\leq e(w,\Box\psi)$.
\end{lemma}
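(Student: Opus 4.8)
The plan is to unfold the definition of $\Box$ and reduce the claimed inequality, via the residuation property defining $\Rightarrow$, to a single pointwise statement at each accessible world, where it follows from the inductive clause $e(v,\phi\rightarrow\psi)=e(v,\phi)\Rightarrow e(v,\psi)$ together with idempotency of the minimum t-norm.

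First I would abbreviate $a=e(w,\Box(\phi\rightarrow\psi))$, $b=e(w,\Box\phi)$ and, for a fixed $v\in\mathcal{D}(\mathfrak{M})$, $r=R(w,v)$. Since $e(w,\Box\psi)=\inf_{v}\{R(w,v)\Rightarrow e(v,\psi)\}$ is an infimum, it suffices to prove $a\odot b\leq r\Rightarrow e(v,\psi)$ for every single $v$; the asserted bound then follows by passing to the infimum over $v$. By the residuation property this is in turn equivalent to $a\odot b\odot r\leq e(v,\psi)$, so the whole statement reduces to this one pointwise inequality.

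To establish it, I would exploit that $a$ and $b$ are themselves infima and hence bounded by their $v$-th terms, giving $a\leq r\Rightarrow e(v,\phi\rightarrow\psi)$ and $b\leq r\Rightarrow e(v,\phi)$. Applying residuation to each yields $a\odot r\leq e(v,\phi\rightarrow\psi)=e(v,\phi)\Rightarrow e(v,\psi)$ and $b\odot r\leq e(v,\phi)$, where the displayed equality is just the inductive clause for implication. A second application of residuation to the first turns it into $(a\odot r)\odot e(v,\phi)\leq e(v,\psi)$, and monotonicity of $\odot$ (Lemma \ref{lem:mintnormprop}(1)) then combines the two estimates: from $b\odot r\leq e(v,\phi)$ we obtain $(a\odot r)\odot(b\odot r)\leq(a\odot r)\odot e(v,\phi)\leq e(v,\psi)$.

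The final—and only delicate—step is to recognize that the left-hand side collapses to $a\odot b\odot r$. This is exactly where G\"odel logic is used: since $\odot$ is the minimum t-norm it is idempotent, so $(a\odot r)\odot(b\odot r)=a\odot b\odot r\odot r=a\odot b\odot r$, whence $a\odot b\odot r\leq e(v,\psi)$, which is the pointwise inequality we needed. I expect this idempotency move to be the crux of the argument: for a general t-norm the same chain only delivers $a\odot b\odot r\odot r\leq e(v,\psi)$, and one would need a further step to absorb the repeated factor $r$, whereas in the minimum-t-norm setting it is free.
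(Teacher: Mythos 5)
Your proof is correct. Note that the paper contains no proof of this lemma to compare against: it is imported verbatim from Caicedo and Rodriguez \cite{CR2010}. Your argument --- bounding each of the two infima $a=e(w,\Box(\phi\rightarrow\psi))$ and $b=e(w,\Box\phi)$ by its $v$-th term, transposing back and forth across $\Rightarrow$ via the residuation property, combining with monotonicity of $\odot$, and finally collapsing $(a\odot r)\odot(b\odot r)$ to $a\odot b\odot r$ by idempotency of $\min$ --- is the standard derivation of the K-inequality over G\"odel-Kripke models and is sound at every step. Your closing diagnosis that idempotency is the crux is also exactly right, and it is corroborated by the paper itself: in its final section it remarks that axiom (K), $\Box(\phi\rightarrow\psi)\rightarrow(\Box\phi\rightarrow\Box\psi)$, is no longer valid over models with fuzzy accessibility when the base is \L ukasiewicz or Product logic, which is precisely the failure of the absorption $r\odot r=r$ that your last step exploits.
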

\begin{proof}
Let $\mathfrak{M}=\langle W,R,\mathcal{E},e\rangle$. We have for any $u\in W$ that
\begin{align*}
&\inf_{v\in W}\{R(w,v)\Rightarrow e(v,\phi\rightarrow\psi)\}\odot\inf_{v\in W}\{R(w,v)\Rightarrow e(v,\phi)\}\\ &\qquad\leq(R(w,u)\Rightarrow e(u,\phi\rightarrow\psi))\odot (R(w,u)\Rightarrow e(u,\phi))\\
&\qquad\leq R(w,u)\Rightarrow (e(u,\phi\rightarrow\psi)\odot e(u,\phi))\\
&\qquad\leq R(w,u)\Rightarrow e(u,\psi).
\end{align*}
Thus, by taking the infimum over $u$, we obtain
\[
e(w,\Box(\phi\rightarrow\psi))\odot e(w,\Box\phi)\leq\inf_{u\in W}\{R(w,u)\Rightarrow e(u,\psi)\}=e(w,\Box\psi).
\]
\end{proof}
By properties of $\odot$ and the residuum $\Rightarrow$, the result may also be rephrased as $e(w,\Box(\phi\rightarrow\psi))\leq e(w,\Box\phi)\Rightarrow e(w,\Box\psi)$ for any $\mathsf{GJ}$-model $\mathfrak{M}$ and $w\in\mathcal{D}(\mathfrak{M})$.
\begin{definition}
Let $\mathcal{GJ}_0$ be the following axiomatic extension, in the language $\mathcal{L}_J$, of the proof calculus for standard propositional G\"odel logic $\mathcal{G}$:
\begin{description}
\item [($P$)] The axiom schemes of the calculus $\mathcal{G}$,
\item [($J$)] $t:(\phi\rightarrow\psi)\rightarrow (s:\phi\rightarrow [t\cdot s]:\psi)$,
\item [($+$)] $t:\phi\rightarrow [t+s]:\phi$, $s:\phi\rightarrow [t+s]:\phi$,
\item [($MP$)] From $\phi$ and $\phi\rightarrow\psi$, infer $\psi$.
\end{description}
We denote inference of a formula $\phi\in\mathcal{L}_J$ from a set of formulas $\Gamma\subseteq\mathcal{L}_J$ in this calculus by $\Gamma\vdash_{\mathcal{GJ}_0}\phi$ (or $\Gamma\vdash\phi$ if the context is clear).
\end{definition}
\begin{proposition}\label{prop:j+valid}
The schemes ($J$) and ($+$) are $\mathsf{GJ}$-valid. 
\end{proposition}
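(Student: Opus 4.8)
The plan is to use that $\mathsf{GJ}$-validity of a formula means $e(w,\cdot)=1$ at every world $w$ of every model $\mathfrak{M}\in\mathsf{GJ}$, together with the fact that $x\Rightarrow y=1$ if and only if $x\le y$ for the minimum residuum. Hence validity of an implication $\alpha\to\beta$ reduces to the pointwise inequality $e(w,\alpha)\le e(w,\beta)$, and validity of a nested implication $\alpha\to(\beta\to\gamma)$ reduces, after one application of the residuation law $x\odot y\le z$ iff $x\le y\Rightarrow z$, to $e(w,\alpha)\odot e(w,\beta)\le e(w,\gamma)$. The whole argument is then a matter of unfolding the clause $e(w,t:\chi)=\mathcal{E}(t,\chi,w)\odot e(w,\Box\chi)$ and combining the closure conditions on $\mathcal{E}$ with Lemma~\ref{lem:kaxiomvalid}.

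First I would treat $(J)$. Fixing a model and a world $w$, the reduction above turns the goal into
\[
\mathcal{E}(t,\phi\to\psi,w)\odot e(w,\Box(\phi\to\psi))\odot\mathcal{E}(s,\phi,w)\odot e(w,\Box\phi)\le\mathcal{E}(t\cdot s,\psi,w)\odot e(w,\Box\psi),
\]
after unfolding each $t:\chi$ subformula. Using commutativity and associativity of the minimum t-norm, I would regroup the left-hand side into the product of the justification part $\mathcal{E}(t,\phi\to\psi,w)\odot\mathcal{E}(s,\phi,w)$ and the modal part $e(w,\Box(\phi\to\psi))\odot e(w,\Box\phi)$. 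The first factor is bounded by $\mathcal{E}(t\cdot s,\psi,w)$ by closure condition~(i), and the second by $e(w,\Box\psi)$ by Lemma~\ref{lem:kaxiomvalid}; monotonicity of $\odot$ (Lemma~\ref{lem:mintnormprop}(1)) then yields the displayed inequality.

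For $(+)$ the argument is shorter. Since $\oplus=\max$, closure condition~(ii) gives $\mathcal{E}(t,\phi,w)\le\mathcal{E}(t,\phi,w)\oplus\mathcal{E}(s,\phi,w)\le\mathcal{E}(t+s,\phi,w)$, and symmetrically $\mathcal{E}(s,\phi,w)\le\mathcal{E}(t+s,\phi,w)$. Forming the $\odot$-product with the common factor $e(w,\Box\phi)$ and invoking monotonicity of $\odot$ turns these into $e(w,t:\phi)\le e(w,[t+s]:\phi)$ and $e(w,s:\phi)\le e(w,[t+s]:\phi)$, which are exactly the two pointwise inequalities to which validity of the two $(+)$-schemes reduces.

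I do not expect a genuine obstacle here: the content is a bookkeeping combination of two pre-established facts. The only point requiring slight care is the regrouping step in $(J)$ — one must split the four-fold $\odot$-product into precisely the two pairs matching closure condition~(i) and Lemma~\ref{lem:kaxiomvalid}, which is legitimate exactly because the minimum t-norm is commutative and associative. If one preferred, this regrouping could be avoided by proving the two auxiliary inequalities separately and chaining them through monotonicity of $\odot$, at the cost of a slightly longer display.
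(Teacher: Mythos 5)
Your proposal is correct and follows essentially the same route as the paper's own proof: unfold $e(w,t:\chi)=\mathcal{E}(t,\chi,w)\odot e(w,\Box\chi)$, regroup the four-fold $\odot$-product via commutativity and associativity into the justification pair (handled by closure condition~(i)) and the modal pair (handled by Lemma~\ref{lem:kaxiomvalid}), combine with monotonicity of $\odot$, and convert to and from implications by residuation; the $(+)$ case likewise matches, with $\oplus=\max$ giving $\mathcal{E}(t,\phi,w)\leq\mathcal{E}(t+s,\phi,w)$ and monotonicity finishing. The only cosmetic difference is direction: you reduce the implication to an inequality goal up front, whereas the paper computes forward from the product and residuates at the end.
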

\begin{proof}
Let $\mathfrak{M}=\langle W,R,\mathcal{E},e\rangle$ be a $\mathsf{GJ}$-model and $w\in W$.
\begin{description}
\item [($J$)] We have
\begin{align*}
&e(w,t:(\phi\rightarrow\psi))\odot e(w,s:\phi)\\
&\qquad =(\mathcal{E}(w,t,\phi\rightarrow\nobreak\psi)\odot e(w,\Box(\phi\rightarrow\psi)))\odot(\mathcal{E}(w,s,\phi)\odot e(w,\Box\phi))\\
&\qquad=(\mathcal{E}(w,t,\phi\rightarrow\psi)\odot\mathcal{E}(w,s,\phi))\odot (e(w,\Box(\phi\rightarrow\psi))\odot e(w,\Box\phi))
\end{align*}
by commutativity and associativity of $\odot$. As
\[
\mathcal{E}(w,t,\phi\rightarrow\psi)\odot\mathcal{E}(w,s,\phi)\leq\mathcal{E}(w,t\cdot s,\psi)
\]
by property (i) on $\mathcal{E}$ of a $\mathsf{GJ}$-model and
\[
e(w,\Box(\phi\rightarrow\psi))\odot e(w,\Box\phi)\leq e(w,\Box\psi)
\]
by Lem. \ref{lem:kaxiomvalid}, we have through monotonicity of $\odot$: 
\begin{align*}
&(\mathcal{E}(w,t,\phi\rightarrow\psi)\odot\mathcal{E}(w,s,\phi))\odot (e(w,\Box(\phi\rightarrow\psi))\odot e(w,\Box\phi))\\
&\qquad\leq\mathcal{E}(w,t\cdot s,\psi)\odot e(w,\Box\psi)=e(w,[t\cdot s]:\psi).
\end{align*}
Thus, $e(w,t:(\phi\rightarrow\psi))\odot e(w,s:\phi)\leq e(w,[t\cdot s]:\psi)$, i.e. by properties of the residuum \[
e(w,t:(\phi\rightarrow\psi))\leq e(w,s:\phi)\Rightarrow e(w,[t\cdot s]:\psi)=e(w,s:\phi\rightarrow [t\cdot s]:\psi).\]
\item [($+$)] We just show the first case, as the second case follows similarly. We have $e(w,t:\phi)=\mathcal{E}(w,t,\phi)\odot e(w,\Box\phi)$. By
\[
\mathcal{E}(w,t,\phi)\oplus\mathcal{E}(w,s,\phi)\leq\mathcal{E}(w,t+s,\phi),
\]
as of property (ii) on $\mathcal{E}$ of a $\mathsf{GJ}$-model, we have $\mathcal{E}(w,t,\phi)\leq\mathcal{E}(w,t+s,\phi)$. Thus again by monotonicity of $\odot$, we have
\[
\mathcal{E}(w,t,\phi)\odot e(w,\Box\phi)\leq\mathcal{E}(w,t+s,\phi)\odot e(w,\Box\phi)=e(w,[t+s]:\phi).
\]
\end{description}
\end{proof}
\subsection{Constant specifications and internalization}
Constant specifications are a weakened implementation of the principle of logical awareness, i.e. regarding axioms to be self-evidently justified, with weakened in the sense that we may restrict this view to a corresponding subset of the axioms in question. From a basic practical point, a constant specification helps an agent to make more justified inference.
\begin{definition}
For a given proof calculus $\mathcal{S}$, defined over the corresponding language $\mathcal{L}_J$, a \emph{constant specification for $\mathcal{S}$} is a set $CS$ of formulas of the form
\[
c_{i_n}:c_{i_{n-1}}:\dots:c_{i_1}:\phi
\]
where $n\geq 1$, $\phi$ is an axiom instance of $\mathcal{S}$ and the $c_{i_k}$'s are constants. Additionally, a constant specification is expected to be downward closed, i.e.
\[
\text{if }c_{i_n}:c_{i_{n-1}}:\dots:c_{i_1}:\phi\in CS\text{, then }c_{i_k}:\dots:c_{i_1}:\phi\in CS
\] 
for all $k=1,\dots,n$.

We call $CS$ \emph{axiomatically appropriate} for $\mathcal{S}$, if for each axiom instance $\phi$ of $\mathcal{S}$, there is a constant $c\in C$ such that $c:\phi\in CS$, and if $c_{i_n}:c_{i_{n-1}}:\dots:c_{i_1}:\phi\in CS$, then $c_{i_{n+1}}:c_{i_n}:c_{i_{n-1}}:\dots:c_{i_1}:\phi\in CS$ for some constant $c_{i_{n+1}}$.
\end{definition}
\begin{definition}
We say that a G\"odel justification model $\mathfrak{M}=\langle W,R,\mathcal{E},e\rangle$ \emph{respects a constant specification} $CS$, if
\[
\forall c:\phi\in CS:\forall w\in W:\mathcal{E}(w,c,\phi)=1.
\]
For a class $\mathsf{C}$ of $\mathsf{GJ}$-models, we denote the subclass of all $\mathsf{GJ}$-models in $\mathsf{C}$ respecting a constant specification $CS$ by $\mathsf{C_{CS}}$.
\end{definition}
\begin{definition}
Let $CS$ be a constant specification (for $\mathcal{GJ}_0$). We define $\mathcal{GJ}_{CS}$ as $\mathcal{GJ}_0$ extended by the rule
\[
(CS):\text{ From }c:\phi\in CS\text{, infer }c:\phi.
\]
\end{definition}
Clearly, $\mathcal{GJ}_0$ relates to $\mathcal{GJ}_\varnothing$. Similarly, as propositional G\"odel logic, G\"odel justification logic enjoys the classical deduction theorem, which is a notable exception in comparison to other representatives in the framework of fuzzy (justification) logics.
\begin{lemma}[Deduction theorem]
Let $\Gamma\cup\{\alpha,\phi\}\subseteq\mathcal{L}_J$: $\Gamma\cup\{\alpha\}\vdash_{\mathcal{GJ}_{CS}}\phi$ iff $\Gamma\vdash_{\mathcal{GJ}_{CS}}\alpha\rightarrow\phi$.
\end{lemma}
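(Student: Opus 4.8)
The plan is to prove the two directions separately, with the right-to-left implication being essentially immediate and the substance of the argument lying in the left-to-right implication, which I would establish by induction on the length of a fixed $\mathcal{GJ}_{CS}$-derivation. For the direction $\Gamma\vdash_{\mathcal{GJ}_{CS}}\alpha\rightarrow\phi\Rightarrow\Gamma\cup\{\alpha\}\vdash_{\mathcal{GJ}_{CS}}\phi$, I would simply note that $\Gamma\subseteq\Gamma\cup\{\alpha\}$ gives $\Gamma\cup\{\alpha\}\vdash\alpha\rightarrow\phi$ by monotonicity, that $\Gamma\cup\{\alpha\}\vdash\alpha$ holds trivially since $\alpha$ is a premise, and that a single application of $(MP)$ then yields $\Gamma\cup\{\alpha\}\vdash\phi$.

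For the converse direction I would fix a derivation $\phi_1,\dots,\phi_n=\phi$ of $\phi$ from $\Gamma\cup\{\alpha\}$ and show by induction on $k$ that $\Gamma\vdash_{\mathcal{GJ}_{CS}}\alpha\rightarrow\phi_k$, distinguishing cases according to how $\phi_k$ enters the derivation. If $\phi_k$ is an axiom of $\mathcal{GJ}_0$ (that is, an axiom of $\mathcal{G}$ or an instance of the schemes ($J$) or ($+$)), or a member of $\Gamma$, then $\Gamma\vdash\phi_k$, and instantiating Lemma~\ref{lem:goedellogicthms}(1) to $\phi_k\rightarrow(\alpha\rightarrow\phi_k)$ followed by one application of $(MP)$ gives $\Gamma\vdash\alpha\rightarrow\phi_k$. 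If $\phi_k=\alpha$, then $\alpha\rightarrow\phi_k$ is $\alpha\rightarrow\alpha$, a theorem by Lemma~\ref{lem:goedellogicthms}(2), so $\Gamma\vdash\alpha\rightarrow\phi_k$. Finally, if $\phi_k$ is obtained by $(MP)$ from some earlier $\phi_i$ and $\phi_j=\phi_i\rightarrow\phi_k$ with $i,j<k$, then by the induction hypothesis $\Gamma\vdash\alpha\rightarrow\phi_i$ and $\Gamma\vdash\alpha\rightarrow(\phi_i\rightarrow\phi_k)$; instantiating Lemma~\ref{lem:goedellogicthms}(3) to $(\alpha\rightarrow(\phi_i\rightarrow\phi_k))\rightarrow((\alpha\rightarrow\phi_i)\rightarrow(\alpha\rightarrow\phi_k))$ and applying $(MP)$ twice yields $\Gamma\vdash\alpha\rightarrow\phi_k$. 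This is exactly the point at which G\"odel logic is needed: item (3) of Lemma~\ref{lem:goedellogicthms} is the self-distribution principle, available here but failing for the other prominent t-norm logics, which is why the classical argument transfers.

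The one case deserving genuine attention, and the place I expect the only real subtlety, is the rule $(CS)$: a step $\phi_k=c:\psi$ may be introduced not by $(MP)$ but by the clause ``from $c:\psi\in CS$, infer $\vdash c:\psi$''. The key observation is that $(CS)$ is \emph{premise-free} and produces theorems outright, so whenever $\phi_k$ arises this way we have $\vdash_{\mathcal{GJ}_{CS}}\phi_k$ and hence $\Gamma\vdash\phi_k$, and the case collapses into the axiom-like treatment above via Lemma~\ref{lem:goedellogicthms}(1) and $(MP)$. It is precisely the fact that modalities can only be introduced as theorems through $(CS)$ applied to constant-axiom instances, rather than through an unrestricted necessitation rule $\phi/t{:}\phi$ acting on arbitrary derived formulas, that preserves the deduction theorem here, whereas the analogous rule breaks it for standard modal logics.
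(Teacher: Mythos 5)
Your proof is correct and is exactly the argument the paper has in mind: the paper omits the details, stating only that the proof proceeds ``via the usual induction, employing Lem.~\ref{lem:goedellogicthms}'', which is precisely your case analysis using items (1)--(3) of that lemma for the axiom/premise, $\phi_k=\alpha$, and $(MP)$ cases. Your treatment of the $(CS)$ rule as a premise-free, axiom-like step (the only point where the argument could differ from the classical one) is also the right resolution, so nothing is missing.
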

The proof is essentially the same as in the case of classical (justification) logic, proceeding via the usual induction which employs Lem. \ref{lem:goedellogicthms}, and is thus omitted.
\begin{lemma}\label{lem:csrulevalid}
Every formula that is deduced by the rule ($CS$) is valid in the model class $\mathsf{GJ_{CS}}$.
\end{lemma}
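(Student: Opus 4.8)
The plan is to read the validity of the rule $(CS)$ in $\mathsf{GJ_{CS}}$ as the assertion that for every model $\mathfrak{M}=\langle W,R,\mathcal{E},e\rangle\in\mathsf{GJ_{CS}}$ and every $c:\chi\in CS$ we have $\mathfrak{M}\models c:\chi$, since $(CS)$ carries no semantic premise beyond the side condition $c:\chi\in CS$. Unfolding the evaluation of the modality, at any world $w\in W$ we have $e(w,c:\chi)=\mathcal{E}(c,\chi,w)\odot e(w,\Box\chi)$, and because $\mathfrak{M}$ respects $CS$ the factor $\mathcal{E}(c,\chi,w)$ equals $1$ for every $w$. Hence the task reduces to proving $e(w,\Box\chi)=1$ for all $w$; by the explicit shape of the residuum this holds whenever $R(w,v)\Rightarrow e(v,\chi)=1$ for every $v$, which is guaranteed once $e(v,\chi)=1$ for all $v$, i.e. once $\chi$ is valid in $\mathfrak{M}$. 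The heart of the proof is therefore to show that every formula $\chi$ appearing as the body of a constant-specification formula is valid in $\mathfrak{M}$.

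First I would prove this auxiliary validity by induction on the nesting depth $n$ of the constant-specification formula, recalling that each member of $CS$ has the form $c_n:c_{n-1}:\dots:c_1:\phi$ with $\phi$ an axiom. In the base case $n=1$ the body is the axiom $\phi$, so I must check that each axiom of $\mathcal{GJ}_{CS}$ is $\mathsf{GJ}$-valid: the schemes $(J)$ and $(+)$ are exactly Proposition \ref{prop:j+valid}, while for the propositional axioms $(P)$ I would observe that, reading the propositional variables and the modal subformulas $t:\psi$ as opaque atoms, the map $\psi\mapsto e(w,\psi)$ is a propositional G\"odel evaluation; since every axiom of $\mathcal{G}$ is a $1$-tautology by the soundness half of Theorem \ref{thm:gssc}, each of its substitution instances takes value $1$ at $w$. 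In the inductive step the body is $c_{n-1}:\dots:c_1:\phi$, which by downward closure of $CS$ again belongs to $CS$ and has nesting depth $n-1$, so the induction hypothesis supplies its validity directly.

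With the validity of every body $\chi$ established, the argument closes uniformly: for $c:\chi\in CS$ I combine $\mathcal{E}(c,\chi,w)=1$, from $\mathfrak{M}$ respecting $CS$, with $e(w,\Box\chi)=1$, from $\mathfrak{M}\models\chi$ through the residuum computation above, to get $e(w,c:\chi)=1\odot 1=1$ at every $w$, whence $\mathfrak{M}\models c:\chi$.

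I expect the one genuine obstacle to be the base case, specifically the justification that substitution instances of the propositional $(P)$-axioms remain valued $1$ at every world of an arbitrary $\mathsf{GJ}$-model. This is the step where one must argue carefully that the world-wise evaluation is nothing more than a propositional G\"odel evaluation over the modal formulas treated as atoms, so that soundness of $\mathcal{G}$ can be invoked rather than any model-specific reasoning. Everything else is routine: the downward closure of $CS$ makes the induction bookkeeping immediate, and the reduction to $e(w,\Box\chi)=1$ follows at once from the piecewise description of $\Rightarrow$.
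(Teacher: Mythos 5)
Your proof is correct and is essentially the paper's own argument: both reduce validity of $c:\chi$ to $\mathcal{E}(c,\chi,w)=1$ (from respecting $CS$) together with $e(w,\Box\chi)=1$ (from validity of the body $\chi$), and both handle nested members $c_n:\dots:c_1:\phi$ of $CS$ by induction on the nesting depth via downward closure, grounded in the validity of the axioms --- where your explicit justification of the propositional axioms $(P)$, viewing $e(w,\cdot)$ as a propositional G\"odel evaluation with the formulas $t:\psi$ treated as atoms, fills in a step the paper leaves implicit in the phrase ``all axioms are $\mathsf{GJ}$-valid''. One bookkeeping slip: as literally stated, your induction hypothesis yields validity of the \emph{body} of $c_{n-1}:\dots:c_1:\phi$, not of that formula itself, so the inductive step needs one further application of your own reduction ($\mathcal{E}=1$ plus the residuum computation) to climb from the body to the formula --- which is precisely the ``continuing this up to $n$'' iteration in the paper's proof.
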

\begin{proof}
Let $c_{i_n}:\dots:c_{i_1}:\phi\in CS$ and let $\mathfrak{M}=\langle W,R,\mathcal{E},e\rangle$ be a $\mathsf{GJ}$-model respecting $CS$. Now, as $CS$ is downward closed, we have $c_{i_k}:\dots:c_{i_1}:\phi\in CS$ for every $k\in\{1,\dots,n\}$. Thus, for every $k\in\{1,\dots,n\}$, we have 
\[
\mathcal{E}(w,c_{i_k},c_{i_{k-1}}:\dots:c_{i_1}:\phi)=1
\] 
for all $w\in W$. As $\phi$ is an axiom of $\mathcal{GJ}_{CS}$, we have, as all axioms are $\mathsf{GJ}$-valid, that $e(w,\phi)=1$ for all $w\in W$. Thus, $e(w,\Box\phi)=1$ for all $w\in W$ and thus $e(w,c_{i_1}:\phi)=\mathcal{E}(w,c_{i_1},\phi)\odot e(w,\Box\phi)=1$ for all $w\in W$. From this, we have that $e(w,c_{i_2}:c_{i_1}:\phi)=\mathcal{E}(w,c_{i_2},c_{i_1}:\phi)\odot e(w,\Box c_{i_1}:\phi)=1$. Continuing this up to $n$ gives
\[
\mathcal{E}(w,c_{i_n},c_{i_{n-1}}:\dots:c_{i_1}:\phi)\odot e(w,\Box c_{i_{n-1}}:\dots:c_{i_1}:\phi)=1
\]
for all $w\in W$.
\end{proof}
\begin{definition}
We say that $\mathcal{GJ}_{CS}$ \emph{enjoys internalization}, if $\vdash_{\mathcal{GJ}_{CS}}\phi$ implies that there exists a justification term $t\in Jt$ such that $\vdash_{\mathcal{GJ}_{CS}}t:\phi$.
\end{definition}
\begin{lemma}[Lifting lemma]\label{lem:lifting}
Let $CS$ be an axiomatically appropriate constant specification for $\mathcal{GJ}_0$. If $\{\psi_1,\dots,\psi_n\}\vdash_{\mathcal{GJ}_{CS}}\phi$, then for any justification terms $t_1,\dots,t_n\in Jt$ there is a justification term $t\in Jt$ such that \[\{t_1:\psi_1,\dots,t_n:\psi_n\}\vdash_{\mathcal{GJ}_{CS}}t:\phi.\]
\end{lemma}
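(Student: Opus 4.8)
The plan is to induct on the structure of the given derivation $\{\psi_1,\dots,\psi_n\}\vdash_{\mathcal{GJ}_{CS}}\phi$, fixing the terms $t_1,\dots,t_n$ at the outset and building the witnessing term $t$ line by line. Concretely, I would fix a derivation and perform a strong induction on the position of $\phi$ in it, distinguishing the four ways a line can be justified in $\mathcal{GJ}_{CS}$: it is one of the premises $\psi_i$, it is an axiom, it is introduced by the rule $(CS)$, or it is the conclusion of an application of $(MP)$ from two earlier lines. Throughout, the same assumption set $\{t_1:\psi_1,\dots,t_n:\psi_n\}$ is threaded through every subderivation, so that the induction hypothesis always applies to the earlier lines with respect to this fixed set.

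The two straightforward base cases come first. If $\phi=\psi_i$ is a premise, then $t_i:\psi_i$ already lies in the assumption set, so I take $t:=t_i$. If $\phi$ is introduced by $(CS)$, then $\phi=c:\chi$ with $c:\chi\in CS$; since $CS$ is axiomatically appropriate there is a constant $c'$ with $c':c:\chi\in CS$, and a further application of $(CS)$ yields $\vdash_{\mathcal{GJ}_{CS}}c':\phi$, so $t:=c'$ suffices. The axiom case is where axiomatic appropriateness does its essential work: if $\phi$ is an axiom of $\mathcal{GJ}_{CS}$, then by the first clause of axiomatic appropriateness there is a constant $c$ with $c:\phi\in CS$, and $(CS)$ gives $\vdash_{\mathcal{GJ}_{CS}}c:\phi$ outright, so $t:=c$.

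The $(MP)$ step is the heart of the argument. Suppose $\phi$ follows by $(MP)$ from $\chi$ and $\chi\rightarrow\phi$, both appearing earlier in the derivation. By the induction hypothesis there are terms $s,u\in Jt$ with $\{t_1:\psi_1,\dots,t_n:\psi_n\}\vdash_{\mathcal{GJ}_{CS}}s:\chi$ and $\{t_1:\psi_1,\dots,t_n:\psi_n\}\vdash_{\mathcal{GJ}_{CS}}u:(\chi\rightarrow\phi)$. The instance $u:(\chi\rightarrow\phi)\rightarrow(s:\chi\rightarrow[u\cdot s]:\phi)$ of axiom $(J)$ then lets me discharge these two facts by two successive applications of $(MP)$, yielding $[u\cdot s]:\phi$; hence $t:=[u\cdot s]$. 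This is precisely the clause for which the product operation $\cdot$ and the governing axiom $(J)$ were designed, and it is the only case in which the constructed term genuinely grows.

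I expect the difficulty here to be one of careful bookkeeping rather than a genuine mathematical obstacle: one must fix $t_1,\dots,t_n$ once and for all, keep the assumption set constant across all subderivations, and formulate the induction over a single fixed derivation so that each prefix is itself a derivation of the corresponding line from the premises. It is worth noting that the argument uses only the rule $(CS)$, the axiom $(J)$, and the two defining clauses of axiomatic appropriateness; the $(+)$ axiom plays no role, and the fuzzy semantics is irrelevant since the statement is purely syntactic.
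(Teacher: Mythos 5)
Your proof is correct and is essentially the argument the paper has in mind: the paper omits the proof, deferring to the standard classical induction on derivations (Artemov, Kuznets), which is exactly what you reconstruct — premises handled by the fixed $t_i$, axioms by the first clause of axiomatic appropriateness, $(CS)$-lines by the iteration clause, and $(MP)$ by axiom $(J)$ with the term $[u\cdot s]$. Your closing remarks (fixed assumption set, no role for $(+)$, purely syntactic argument) are also accurate.
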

The proof of this lemma is strictly similar to the proof in the classical case (see e.g. \cite{Art2001}, \cite{Kuz2008}) and thus omitted here. The following is a direct consequence of the Lifting lemma.
\begin{lemma}
If $CS$ is an axiomatically appropriate constant specification for $\mathcal{GJ}_0$, then $\mathcal{GJ}_{CS}$ enjoys internalization.
\end{lemma}
Using the deduction theorem, we may now obtain the soundness of the system $\mathcal{GJ}_{CS}$ for any constant specification $CS$ for $\mathcal{GJ}_0$.
\begin{lemma}[Soundness of $\mathcal{GJ}_{CS}$]\label{lem:gjcssoundness}
For any $\Gamma\cup\{\phi\}\subseteq\mathcal{L}_J$: $\Gamma\vdash_{\mathcal{GJ}_{CS}}\phi$ implies $\Gamma\models_{\mathsf{GJ_{CS}}\leq}\phi$.
\end{lemma}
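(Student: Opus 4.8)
The plan is to argue by induction on the structure (equivalently, the length) of a derivation witnessing $\Gamma\vdash_{\mathcal{GJ}_{CS}}\phi$. I fix an arbitrary model $\mathfrak{M}=\langle W,R,\mathcal{E},e\rangle\in\mathsf{GJ_{CS}}$ together with an arbitrary world $w\in W$, abbreviate $\gamma:=e(w,\Gamma)=\inf_{\psi\in\Gamma}e(w,\psi)$, and maintain throughout the induction the invariant that the formula $\chi$ standing at each line of the derivation satisfies $\gamma\leq e(w,\chi)$. Since $\mathfrak{M}$ and $w$ are arbitrary, establishing the invariant for the final formula $\phi$ yields $e(w,\Gamma)\leq e(w,\phi)$ for all models in $\mathsf{GJ_{CS}}$ and all worlds, which is exactly $\Gamma\models_{\mathsf{GJ_{CS}}\leq}\phi$ by definition.

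For the base cases I distinguish the ways a formula may enter a derivation without an inference. If $\chi\in\Gamma$, then $e(w,\chi)\geq\inf_{\psi\in\Gamma}e(w,\psi)=\gamma$ directly from the definition of the infimum. If $\chi$ is an instance of a propositional axiom from $(P)$, then I observe that, at the fixed world $w$, the map $\xi\mapsto e(w,\xi)$ is a propositional G\"odel evaluation once the subformulas of the shape $t:\alpha$ are regarded as propositional atoms; hence, since the axioms of $\mathcal{G}$ are $\mathcal{G}$-provable, the soundness direction of Theorem \ref{thm:gssc} gives that they take value $1$ under every propositional G\"odel evaluation, so $\gamma\leq 1=e(w,\chi)$. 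If $\chi$ is an instance of $(J)$ or $(+)$, then $e(w,\chi)=1$ by Proposition \ref{prop:j+valid}, and again $\gamma\leq e(w,\chi)$.

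For the inductive cases I treat the two rules. For $(MP)$, suppose $\chi$ is inferred from $\beta$ and $\beta\rightarrow\chi$, which by the induction hypothesis satisfy $\gamma\leq e(w,\beta)$ and $\gamma\leq e(w,\beta\rightarrow\chi)=e(w,\beta)\Rightarrow e(w,\chi)$. The residuation law gives $\gamma\odot e(w,\beta)\leq e(w,\chi)$, and since $\odot$ is the minimum and $\gamma\leq e(w,\beta)$, the left-hand side equals $\gamma$; hence $\gamma\leq e(w,\chi)$. For the rule $(CS)$, a formula $\chi=c:\psi$ with $c:\psi\in CS$ is introduced; since $\mathfrak{M}$ respects $CS$, Lemma \ref{lem:csrulevalid} yields $e(w,\chi)=1$, so once more $\gamma\leq e(w,\chi)$. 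This exhausts all cases and closes the induction.

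The only genuinely delicate point is the $(MP)$ step, where the target is the stronger $\leq$-entailment rather than mere $1$-entailment: the argument succeeds precisely because the minimum t-norm makes $\gamma\odot e(w,\beta)=\gamma$ under the hypothesis $\gamma\leq e(w,\beta)$, so residuation transports the bound from $\beta\rightarrow\chi$ to $\chi$ without loss. Everything else reduces to the already-established validity of the axioms (Proposition \ref{prop:j+valid} together with the soundness half of Theorem \ref{thm:gssc}) and of the rule $(CS)$ (Lemma \ref{lem:csrulevalid}).
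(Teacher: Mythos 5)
Your proof is correct, but it takes a genuinely different route from the paper's. The paper reduces strong graded soundness to weak soundness: since any derivation uses only finitely many premises $\psi_1,\dots,\psi_n\in\Gamma$, repeated application of the deduction theorem yields $\vdash_{\mathcal{GJ}_{CS}}\bigwedge_{i=1}^n\psi_i\rightarrow\phi$; this is a theorem, hence valid (axioms by Prop.~\ref{prop:j+valid}, preservation of validity under $(MP)$ and, via Lem.~\ref{lem:csrulevalid}, under $(CS)$), and validity of the implication immediately gives $e(w,\Gamma)\leq e(w,\bigwedge_{i=1}^n\psi_i)\leq e(w,\phi)$. You instead run a direct induction on the derivation, carrying the graded invariant $\gamma\leq e(w,\chi)$ through every line. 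The two arguments localize the G\"odel-specific ingredient in different places: in yours it is the idempotency of the minimum t-norm that rescues the $(MP)$ step (from $\gamma\leq e(w,\beta)$ one gets $\gamma\odot e(w,\beta)=\gamma$, so residuation transports the bound), whereas the paper buries the same ingredient inside the deduction theorem, whose proof rests on Lem.~\ref{lem:goedellogicthms}, i.e. ultimately on axiom $(G4)$. Your approach is thus self-contained modulo the validity facts and would be the natural one if the deduction theorem were unavailable; the paper's is shorter given that the deduction theorem has already been established. A small bonus of your write-up: you justify validity of the propositional axiom instances explicitly, by observing that $\xi\mapsto e(w,\xi)$ is a propositional G\"odel evaluation once subformulas $t:\alpha$ are regarded as atoms and then invoking the soundness half of Thm.~\ref{thm:gssc}, whereas the paper's appeal to Prop.~\ref{prop:j+valid} literally covers only $(J)$ and $(+)$ and leaves the scheme $(P)$ implicit.
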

\begin{proof}
We have that $\Gamma\vdash_{\mathcal{GJ}_{CS}}\phi$ implies $\{\psi_1,\dots,\psi_n\}\vdash_{\mathcal{GJ}_{CS}}\phi$ for some $\{\psi_1,\dots,\psi_n\}\subseteq\Gamma$. By repeated application of the deduction theorem and using axiom ($A5a$), we have $\vdash_{\mathcal{GJ}_{CS}}\bigwedge_{i=1}^n\psi_i\rightarrow\phi$. As of Prop. \ref{prop:j+valid} and Thm. \ref{thm:gssc}, all axioms of $\mathcal{GJ}_{CS}$ are $\mathsf{GJ_{CS}}$-valid. Of course ($MP$), and as of Lem. \ref{lem:csrulevalid}, also ($CS$) preserve validity (in $\mathsf{GJ_{CS}}$). Thus $\models_\mathsf{GJ_{CS}}\bigwedge_{i=1}^n\psi_i\rightarrow\phi$ and therefore, for any $\mathsf{GJ_{CS}}$-model $\mathfrak{M}$ and any $w\in\mathcal{D}(\mathfrak{M})$, we have $(\mathfrak{M},w)\models\bigwedge_{i=1}^n\psi_i\rightarrow\phi$, i.e. $e(w,\Gamma)\leq e(w,\bigwedge_{i=1}^n\psi_i)\leq e(w,\phi)$, as $\{\psi_1,\dots,\psi_n\}\subseteq\Gamma$, and thus $\Gamma\models_{\mathsf{GJ_{CS}}\leq}\phi$.
\end{proof}
\section{Modal-type extensions}
Similar to the realm of classical modal logic, the framework of classical justification logic spreads out over numerous extensions of the basic axiomatic system for justifications (similar to $\mathcal{GJ}_0$ here). Of mainline importance are here explicit justification formulas standing in analogy to classical unexplicit epistemic (modal) principles like truth and positive introspection, etc. In this section, we present analogue extensions in the context of \emph{fuzzy} justification logic, both model-theoretically, by characterizing the fuzzy versions of the associated Fitting models, and axiomatically. We do not go into surrounding (philosophical) detail about the here studied principles, however, for an exposition in the classical case, refer to \cite{Art2008}.
\subsection{Factivity}
\begin{definition}
We define $\mathcal{GJT}_0$ as the axiomatic extension of $\mathcal{GJ}_0$ by the axiom scheme $(F): t:\phi\rightarrow\phi$.
\end{definition}
For a constant specification $CS$ for $\mathcal{GJT}_0$, we write $\mathcal{GJT}_{CS}$ for the calculus $\mathcal{GJT}_0$ together with the constant specification rule $(CS)$. 
\begin{definition}
A G\"odel justification model $\mathfrak{M}=\langle W,R,\mathcal{E},e\rangle$ is called \emph{reflexive}, if $\forall w\in W: R(w,w)=1$. The class of all reflexive $\mathsf{GJ}$-models is denoted by $\mathsf{GJT}$.
\end{definition}
\begin{proposition}\label{prop:factivityvalid}
The scheme $t:\phi\rightarrow\phi$ is valid in the class $\mathsf{GJT}$.
\end{proposition}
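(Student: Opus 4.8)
The plan is to unfold the semantics of $t:\phi\rightarrow\phi$ at an arbitrary world and reduce the claim to the single inequality $e(w,t:\phi)\leq e(w,\phi)$. Indeed, by the explicit description of the residuum an implication evaluates to $1$ precisely when the antecedent's value is below the consequent's (i.e. $x\leq y$ gives $x\Rightarrow y=1$), so $(\mathfrak{M},w)\models t:\phi\rightarrow\phi$ is equivalent to $e(w,t:\phi)\leq e(w,\phi)$. Hence I would fix a reflexive $\mathsf{GJ}$-model $\mathfrak{M}=\langle W,R,\mathcal{E},e\rangle$ and $w\in W$, and it then suffices to establish $e(w,t:\phi)\leq e(w,\phi)$.

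First I would discard the $\mathcal{E}$-factor. Since $\odot$ is the minimum t-norm, $a\odot b\leq b$ for all $a,b\in[0,1]$, and therefore $e(w,t:\phi)=\mathcal{E}(t,\phi,w)\odot e(w,\Box\phi)\leq e(w,\Box\phi)$. The remaining and only substantive step is the factivity of the $\Box$-operator under reflexivity, namely $e(w,\Box\phi)\leq e(w,\phi)$. Here $e(w,\Box\phi)=\inf_{v\in W}\{R(w,v)\Rightarrow e(v,\phi)\}$ is an infimum over all worlds, so it is in particular bounded above by its instance at $v=w$, giving $e(w,\Box\phi)\leq R(w,w)\Rightarrow e(w,\phi)$. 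Reflexivity now supplies $R(w,w)=1$, and $1\Rightarrow y=y$ for every $y\in[0,1]$ by the explicit form of the residuum, so $R(w,w)\Rightarrow e(w,\phi)=e(w,\phi)$.

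Chaining the two inequalities yields $e(w,t:\phi)\leq e(w,\Box\phi)\leq e(w,\phi)$, which is exactly what was needed, and since $w$ and $\mathfrak{M}$ were arbitrary this gives $\mathsf{GJT}$-validity. There is no genuine obstacle: the argument is a direct computation in $[0,1]$, and reflexivity enters at only one point, the instantiation $v=w$ in the infimum. The conceptual observation worth flagging is that this faithfully mirrors the classical situation, where factivity $t:\phi\rightarrow\phi$ corresponds exactly to reflexivity of the accessibility relation; the $\mathcal{E}$-component contributes nothing beyond the harmless projection $a\odot b\leq b$, so no hypothesis on $\mathcal{E}$ (such as respecting a constant specification) is needed and the scheme is valid over the full class $\mathsf{GJT}$.
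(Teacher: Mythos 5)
Your proof is correct and follows essentially the same route as the paper's: drop the $\mathcal{E}$-factor using $a\odot b\leq b$, bound the infimum $e(w,\Box\phi)$ by its instance at $v=w$, and use $R(w,w)=1$ to conclude $e(w,t:\phi)\leq e(w,\phi)$. The only difference is presentational — you spell out the reduction of validity to the inequality and split the chain into named steps, whereas the paper compresses it into a single displayed inequality.
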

\begin{proof}
Let $\mathfrak{M}=\langle W,R,\mathcal{E},e\rangle\in\mathsf{GJT}$ and let $w\in W$. Then
\begin{align*}
e(w,t:\phi)&=\mathcal{E}(w,t,\phi)\odot\inf_{v\in W}\{R(w,v)\Rightarrow e(v,\phi)\}\\
           &\leq R(w,w)\Rightarrow e(w,\phi)=e(w,\phi)
\end{align*}
where the last equality follows from $R(w,w)=1$ for all $w\in W$, as $\mathfrak{M}$ is reflexive.
\end{proof}
Similarly, as before, we obtain the soundness of $\mathcal{GJT}_{CS}$ (for some $CS$ for $\mathcal{GJT}_0$) w.r.t. its intended model class and the proof is thus omitted here.
\begin{lemma}[Soundness of $\mathcal{GJT}_{CS}$]\label{lem:gjtcssoundness}
For any $\Gamma\cup\{\phi\}\subseteq\mathcal{L}_J$: $\Gamma\vdash_{\mathcal{GJT}_{CS}}\phi$ implies $\Gamma\models_{\mathsf{GJT_{CS}}\leq}\phi$.
\end{lemma}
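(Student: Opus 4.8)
The plan is to mirror the proof of Lemma \ref{lem:gjcssoundness} verbatim, replacing the reference to Proposition \ref{prop:j+valid} by the corresponding validity facts for the extended calculus and the restricted model class. First I would note that $\Gamma\vdash_{\mathcal{GJT}_{CS}}\phi$ uses only finitely many premises, so $\{\psi_1,\dots,\psi_n\}\vdash_{\mathcal{GJT}_{CS}}\phi$ for some finite $\{\psi_1,\dots,\psi_n\}\subseteq\Gamma$. Repeated application of the deduction theorem (which holds for $\mathcal{GJT}_{CS}$ just as for $\mathcal{GJ}_{CS}$, via Lemma \ref{lem:goedellogicthms}) then yields $\vdash_{\mathcal{GJT}_{CS}}\bigwedge_{i=1}^n\psi_i\rightarrow\phi$, reducing the task to showing that every theorem of $\mathcal{GJT}_{CS}$ is $\mathsf{GJT_{CS}}$-valid.

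The heart of the argument is therefore to verify that all axioms and rules of $\mathcal{GJT}_{CS}$ preserve validity over the class $\mathsf{GJT_{CS}}$. The axioms inherited from $\mathcal{GJ}_{CS}$ — the propositional $\mathcal{G}$-axioms together with ($J$) and ($+$) — are $\mathsf{GJ}$-valid by Proposition \ref{prop:j+valid}, and since $\mathsf{GJT_{CS}}\subseteq\mathsf{GJ}$, validity on the larger class immediately descends to the subclass. The one new ingredient is the factivity scheme ($F$): here I would invoke Proposition \ref{prop:factivityvalid}, which gives validity of $t:\phi\rightarrow\phi$ over the reflexive class $\mathsf{GJT}$, and again use $\mathsf{GJT_{CS}}\subseteq\mathsf{GJT}$ to conclude validity over $\mathsf{GJT_{CS}}$. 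For the rules, ($MP$) preserves validity by the monotonicity and residuation properties of $\odot$ and $\Rightarrow$ exactly as in the basic case, while ($CS$) preserves validity by Lemma \ref{lem:csrulevalid}, whose proof only requires that the model respect $CS$ and hence applies unchanged to the reflexive models in $\mathsf{GJT_{CS}}$.

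From these facts I obtain $\models_{\mathsf{GJT_{CS}}}\bigwedge_{i=1}^n\psi_i\rightarrow\phi$, so that for any $\mathsf{GJT_{CS}}$-model $\mathfrak{M}$ and any $w\in\mathcal{D}(\mathfrak{M})$ we have $e(w,\bigwedge_{i=1}^n\psi_i)\leq e(w,\phi)$; combined with $e(w,\Gamma)\leq e(w,\bigwedge_{i=1}^n\psi_i)$, which holds since $\{\psi_1,\dots,\psi_n\}\subseteq\Gamma$, this gives $e(w,\Gamma)\leq e(w,\phi)$ and hence $\Gamma\models_{\mathsf{GJT_{CS}}\leq}\phi$. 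I do not expect any genuine obstacle here: the only point requiring a moment's care is that restricting from $\mathsf{GJ}$ (or $\mathsf{GJ_{CS}}$) to the strictly smaller reflexive class does not jeopardize the validity of the previously established axioms and of the rule ($CS$) — and this is automatic, since validity over a class of models is preserved under passage to any subclass. The restriction to reflexive models is in fact exactly what is needed to make the genuinely new axiom ($F$) valid.
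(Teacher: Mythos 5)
Your proposal is correct and is exactly the argument the paper intends: the paper omits this proof precisely because it is the proof of Lemma \ref{lem:gjcssoundness} repeated verbatim, with Proposition \ref{prop:factivityvalid} supplying validity of the new axiom ($F$) over the reflexive class and subclass-monotonicity of validity handling the inherited axioms and the rule ($CS$). The only nitpick is your remark that Lemma \ref{lem:csrulevalid} ``only requires that the model respect $CS$''---its proof also needs the justified axioms to be valid in the model class (which for ($F$) is where reflexivity enters), but since your argument establishes exactly that validity beforehand, nothing is missing.
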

\subsection{Positive introspection}
\begin{definition}
We define the following extensions of $\mathcal{GJ}_0$:
\begin{enumerate}
\item $\mathcal{GJ}4_0:=\mathcal{GJ}_0+(PI):t:\phi\rightarrow !t:t:\phi$,
\item $\mathcal{GLP}_0:=\mathcal{GJT}_0+(PI):t:\phi\rightarrow !t:t:\phi$.
\end{enumerate}
\end{definition}
For a constant specification $CS$ for $\mathcal{GJ}4_0$ or $\mathcal{GLP}_0$, we again write $\mathcal{GJ}4_{CS}$ or $\mathcal{GLP}_{CS}$ for the respective extensions by the rule $(CS)$. We can now find similar fuzzy analogues to the classical additional properties of Fitting models regarding positive introspection.
\begin{definition}\label{def:j4lpmodels}
With $\mathsf{GJ4}$, we denote that class of G\"odel justification models $\mathfrak{M}=\langle W,R,\mathcal{E},e\rangle$ satisfying
\begin{enumerate}[(i)]
\item $\mathcal{E}(w,t,\phi)\odot R(w,v)\leq\mathcal{E}(v,t,\phi)$ for all $t\in Jt,\phi\in\mathcal{L}_J,w,v\in W$ \\(monotonicity of $\mathcal{E}$ w.r.t. $R$),
\item $R(w,v)\odot R(v,u)\leq R(w,u)$ for all $w,v,u\in W$ \\((min-)transitivity of $R$),
\item $\mathcal{E}(w,t,\phi)\leq\mathcal{E}(w,!t,t:\phi)$ for all $t\in Jt,\phi\in\mathcal{L}_J,w\in W$ \\(positive introspectivity of $\mathcal{E}$).
\end{enumerate}
The subclass of all \emph{reflexive} $\mathsf{GJ4}$-models is denoted respectively with $\mathsf{GLP}$.
\end{definition}
\begin{lemma}\label{lem:modaltransitivity}
In a (min-)transitive $\mathsf{GJ}$-model $\mathfrak{M}=\langle W,R,\mathcal{E},e\rangle$, it holds for any $w,v\in W$ and any $\phi\in\mathcal{L}_J$ that $e(w,\Box\phi)\leq R(w,v)\Rightarrow e(v,\Box\phi)$.
\end{lemma}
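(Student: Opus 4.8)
The plan is to reduce the claimed inequality to a pure t-norm inequality by applying the adjunction $x\leq y\Rightarrow z\iff x\odot y\leq z$ (the defining property of the residuum) twice, and then to discharge the resulting inequality using (min-)transitivity together with the ``modus ponens'' property of the residuum, namely $(a\Rightarrow b)\odot a\leq b$, which itself is just the adjunction applied to $a\Rightarrow b\leq a\Rightarrow b$.

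First I would unfold the right-hand side via $e(v,\Box\phi)=\inf_{u\in W}\{R(v,u)\Rightarrow e(u,\phi)\}$ and, using the adjunction, observe that it suffices to prove
\[
e(w,\Box\phi)\odot R(w,v)\leq R(v,u)\Rightarrow e(u,\phi)
\]
for every fixed $u\in W$, since the desired bound is then obtained by taking the infimum over $u$ and re-applying the adjunction to pass from $e(w,\Box\phi)\odot R(w,v)\leq e(v,\Box\phi)$ back to $e(w,\Box\phi)\leq R(w,v)\Rightarrow e(v,\Box\phi)$. Applying the adjunction once more to the displayed inequality reduces the whole statement to the single t-norm inequality
\[
e(w,\Box\phi)\odot R(w,v)\odot R(v,u)\leq e(u,\phi).
\]

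To establish this last inequality I would invoke transitivity in the form $R(w,v)\odot R(v,u)\leq R(w,u)$ (condition (ii) of Definition \ref{def:j4lpmodels}) and the monotonicity of $\odot$ (Lemma \ref{lem:mintnormprop}(1)) to bound the left-hand side by $e(w,\Box\phi)\odot R(w,u)$. Then, since $e(w,\Box\phi)=\inf_{u'}\{R(w,u')\Rightarrow e(u',\phi)\}\leq R(w,u)\Rightarrow e(u,\phi)$, monotonicity again gives $e(w,\Box\phi)\odot R(w,u)\leq (R(w,u)\Rightarrow e(u,\phi))\odot R(w,u)$, and the modus ponens property of the residuum yields the final bound $\leq e(u,\phi)$, as required. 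Chaining these steps produces the desired inequality for each $u$.

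I do not expect a genuine obstacle here: the only subtlety is bookkeeping with the two nested infima and making sure the adjunction is applied in the correct direction at each stage, so that the pointwise estimate over $u$ correctly assembles into the statement about $e(v,\Box\phi)$. The hard part, if any, is purely notational—keeping the roles of the residuation arguments straight—rather than mathematical, since all the real content is packaged into transitivity and the elementary residuum identities already recorded in Lemma \ref{lem:mintnormprop}.
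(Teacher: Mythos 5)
Your proof is correct. Note that the paper itself gives no proof of this lemma at all: it is imported from Caicedo--Rodriguez \cite{CR2010} as a known fact about G\"odel--Kripke models, so there is nothing in the paper to compare your argument against. Your residuation argument is the natural, self-contained one and every step checks out: the double application of the adjunction $x\odot y\leq z\iff x\leq y\Rightarrow z$ correctly reduces the claim to $e(w,\Box\phi)\odot R(w,v)\odot R(v,u)\leq e(u,\phi)$ for each fixed $u$, and this follows from min-transitivity, monotonicity of $\odot$ (Lemma \ref{lem:mintnormprop}), the fact that $e(w,\Box\phi)$ is a lower bound of the terms $R(w,u)\Rightarrow e(u,\phi)$, and the modus ponens inequality $(a\Rightarrow b)\odot a\leq b$. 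In effect you have supplied the missing proof rather than an alternative to one.
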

\begin{proof}
We have that for any model $\mathfrak{M}=\langle W,R,\mathcal{E},e\rangle$ and any $w,v,u\in W$ that $R(w,v)\odot R(w,u)\leq R(w,u)$ and thus
\begin{align*}
(e(w,\Box\phi)\odot R(w,v))\odot R(v,u) &=e(w,\Box\phi)\odot (R(w,v)\odot R(v,u))\\
                                        &\leq (R(w,u)\Rightarrow e(u,\phi))\odot R(w,u)\\
                                        &\leq e(u,\phi)
\end{align*}
and thus $e(w,\Box\phi)\odot R(w,v)\leq R(v,u)\Rightarrow e(u,\phi)$ by properties of the residuum. As $u$ was arbitrary, we may take the infimum over $u$, obtaining $e(w,\Box\phi)\odot R(w,v)\leq e(v,\Box\phi)$.
\end{proof}
\begin{proposition}
The scheme $t:\phi\rightarrow !t:t:\phi$ is valid in the class $\mathsf{GJ4}$.
\end{proposition}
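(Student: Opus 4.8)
The plan is to fix an arbitrary model $\mathfrak{M}=\langle W,R,\mathcal{E},e\rangle\in\mathsf{GJ4}$ and a world $w\in W$, and to show $e(w,t:\phi\rightarrow !t:t:\phi)=1$. Since the G\"odel residuum satisfies $x\Rightarrow y=1$ exactly when $x\leq y$, this reduces to the single inequality $e(w,t:\phi)\leq e(w,!t:t:\phi)$. Unfolding the right-hand side gives $e(w,!t:t:\phi)=\mathcal{E}(!t,t:\phi,w)\odot e(w,\Box(t:\phi))$, and because $\odot$ is the minimum t-norm it suffices to establish the two separate bounds $e(w,t:\phi)\leq\mathcal{E}(!t,t:\phi,w)$ and $e(w,t:\phi)\leq e(w,\Box(t:\phi))$.

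The first bound is immediate: from $e(w,t:\phi)=\mathcal{E}(t,\phi,w)\odot e(w,\Box\phi)\leq\mathcal{E}(t,\phi,w)$ together with the positive introspectivity condition (iii), i.e. $\mathcal{E}(t,\phi,w)\leq\mathcal{E}(!t,t:\phi,w)$, the claim follows by transitivity of $\leq$. The substantive work lies in the second bound, which is the explicit analogue of the $\mathcal{S}4$ principle $\Box\chi\rightarrow\Box\Box\chi$ instantiated at $\chi=t:\phi$. Since $e(w,\Box(t:\phi))=\inf_{v\in W}\{R(w,v)\Rightarrow e(v,t:\phi)\}$, by residuation it is enough to prove, for each $v\in W$, that $e(w,t:\phi)\odot R(w,v)\leq e(v,t:\phi)$.

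To see this, I would expand $e(v,t:\phi)=\mathcal{E}(t,\phi,v)\odot e(v,\Box\phi)$ and bound each of its factors from below by a quantity at $w$. The monotonicity condition (i) yields $\mathcal{E}(t,\phi,w)\odot R(w,v)\leq\mathcal{E}(t,\phi,v)$, while the (min-)transitivity of $R$ (condition (ii)) feeds into Lem. \ref{lem:modaltransitivity}, applied to the inner formula $\phi$, to give $e(w,\Box\phi)\odot R(w,v)\leq e(v,\Box\phi)$. Combining these two lower bounds through $\odot$ and invoking the idempotency of the minimum t-norm, so that the two resulting copies of $R(w,v)$ collapse via $R(w,v)\odot R(w,v)=R(w,v)$, produces $e(v,t:\phi)\geq\mathcal{E}(t,\phi,w)\odot e(w,\Box\phi)\odot R(w,v)=e(w,t:\phi)\odot R(w,v)$, which is precisely what is needed. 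Taking the infimum over all $v\in W$ then delivers $e(w,t:\phi)\leq e(w,\Box(t:\phi))$, and combining this with the first bound via the minimum completes the argument.

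The step I expect to be the main obstacle is exactly this collapse of the two $R(w,v)$ factors: it is the idempotency of the minimum t-norm that makes the two lower bounds compose without loss, and it is worth emphasising that this is where the specific structure of G\"odel logic (rather than an arbitrary continuous t-norm) is essential, mirroring how the same feature underlies the transitivity lemma \ref{lem:modaltransitivity} itself.
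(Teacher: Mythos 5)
Your proof is correct and takes essentially the same route as the paper: the same reduction to the two bounds $e(w,t:\phi)\leq\mathcal{E}(!t,t:\phi,w)$ (from positive introspectivity) and $e(w,t:\phi)\leq e(w,\Box(t:\phi))$ (from monotonicity of $\mathcal{E}$ w.r.t.\ $R$ together with Lem.~\ref{lem:modaltransitivity}). The only difference is presentational: you carry the intermediate inequalities in multiplicative form and collapse the two copies of $R(w,v)$ by idempotency, whereas the paper keeps them in residuated form and absorbs that same idempotency into the identity $(x\Rightarrow y)\odot(x\Rightarrow z)=x\Rightarrow(y\odot z)$.
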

\begin{proof}
Let $\mathfrak{M}=\langle W,R,\mathcal{E},e\rangle$ be a $\mathsf{GJ4}$-model and $w\in W$. Now, we have
\[
\mathcal{E}(w,t,\phi)\leq R(w,v)\Rightarrow\mathcal{E}(v,t,\phi)
\]
by monotonicity of $\mathcal{E}$ over $R$ and properties of the residuum for every $v\in W$. By Lem. \ref{lem:modaltransitivity} and monotonicity of $\odot$, we have thus
\begin{align*}
e(w,t:\phi)&=\mathcal{E}(w,t,\phi)\odot e(w,\Box\phi)\\
           &\leq (R(w,v)\Rightarrow\mathcal{E}(v,t,\phi))\odot (R(w,v)\Rightarrow e(v,\Box\phi))
\end{align*}
for all $v\in W$, i.e. we have
\begin{align*}
e(w,t:\phi)&\leq\inf_{v\in W}\{(R(w,v)\Rightarrow\mathcal{E}(v,t,\phi))\odot (R(w,v)\Rightarrow e(v,\Box\phi))\}\\
           &=\inf_{v\in W}\{R(w,v)\Rightarrow (\mathcal{E}(v,t,\phi)\odot e(v,\Box\phi))\}
\end{align*}
and thus, we have $e(w,t:\phi)\leq\inf_{v\in W}\{R(w,v)\Rightarrow e(v,t:\phi)\}$. Similarly, we have
\[
e(w,t:\phi)=\mathcal{E}(w,t,\phi)\odot e(w,\Box\phi)\leq\mathcal{E}(w,!t,t:\phi)
\]
by positive introspectivity and properties of $\odot$. Thus, finally
\[
e(w,t:\phi)\leq\mathcal{E}(w,!t,t:\phi)\odot\inf_{v\in W}\{R(w,v)\Rightarrow e(v,t:\phi)\}=e(w,!t:t:\phi)
\]
and therefore $e(w,t:\phi\rightarrow !t:t:\phi)=1$.
\end{proof}
We again obtain a soundness result for those proof systems in the same way as before, for any well-defined constant specification $CS$.
\begin{lemma}[Soundness of $\mathcal{GJ}4_{CS}, \mathcal{GLP}_{CS}$]\label{lem:j4lpcssoundness}
For any $\Gamma\cup\{\phi\}\subseteq\mathcal{L}_J$, we have
\begin{enumerate}
\item $\Gamma\vdash_{\mathcal{GJ}4_{CS}}\phi$ implies $\Gamma\models_{\mathsf{GJ4_{CS}}\leq}\phi$,
\item $\Gamma\vdash_{\mathcal{GLP}_{CS}}\phi$ implies $\Gamma\models_{\mathsf{GLP}_{CS}\leq}\phi$.
\end{enumerate}
\end{lemma}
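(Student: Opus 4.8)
The plan is to reproduce verbatim the template already established for Lemmas \ref{lem:gjcssoundness} and \ref{lem:gjtcssoundness}, since neither of these two extensions introduces any new derivational mechanism beyond a single additional axiom scheme. For each part I would first reduce $\Gamma \vdash \phi$ to a finite derivation $\{\psi_1, \dots, \psi_n\} \vdash \phi$ from some finite $\{\psi_1, \dots, \psi_n\} \subseteq \Gamma$, then apply the deduction theorem repeatedly (its inductive proof is unaffected by adjoining further axiom schemes) to obtain $\vdash \bigwedge_{i=1}^n \psi_i \rightarrow \phi$ in the respective calculus. It then suffices to show that every theorem of the calculus is valid in the intended model class: from $\models_{\mathsf{C}} \bigwedge_{i=1}^n \psi_i \rightarrow \phi$ we get, for every $\mathfrak{M} \in \mathsf{C}$ and $w \in \mathcal{D}(\mathfrak{M})$, that $(\mathfrak{M},w)\models \bigwedge_{i=1}^n \psi_i \rightarrow \phi$, whence $e(w, \Gamma) \leq e(w, \bigwedge_{i=1}^n \psi_i) \leq e(w, \phi)$ since $\{\psi_1,\dots,\psi_n\}\subseteq\Gamma$, i.e. $\Gamma \models_{\mathsf{C}\leq} \phi$.

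Validity of every theorem reduces, as usual, to validity of the axioms together with validity-preservation of the rules, and this is where the preceding propositions do all the work. For part (1), with model class $\mathsf{GJ4_{CS}}$, the $(P)$, $(J)$ and $(+)$ axioms are already $\mathsf{GJ}$-valid by Prop. \ref{prop:j+valid} and hence valid on the subclass $\mathsf{GJ4_{CS}} \subseteq \mathsf{GJ}$, while the sole new scheme $(PI)$ is $\mathsf{GJ4}$-valid by the Proposition immediately preceding this lemma. For part (2), with class $\mathsf{GLP_{CS}}$ consisting of reflexive $\mathsf{GJ4}$-models, the scheme $(F)$ is valid by Prop. \ref{prop:factivityvalid} (using reflexivity) and $(PI)$ is valid by the preceding Proposition (as $\mathsf{GLP} \subseteq \mathsf{GJ4}$), with $(P)$, $(J)$, $(+)$ again inherited from $\mathsf{GJ}$-validity. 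In both cases $(MP)$ preserves validity by the usual residuation argument.

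The one point deserving explicit attention, and the closest thing to an obstacle, is the $(CS)$ rule, since the constant specification for these richer calculi may now contain constants justifying the new axioms $(PI)$ and $(F)$, so Lem. \ref{lem:csrulevalid} does not apply off the shelf. However, inspecting that lemma's proof shows it uses only downward closure of $CS$ together with the validity of the justified axiom $\phi$: the inductive computation $e(w, c_k {:} \dots {:} c_1 {:} \phi) = 1$ goes through verbatim provided $e(w,\phi)=1$ holds on the class at hand. Since the previous paragraph records that all axioms of $\mathcal{GJ}4_{CS}$ and $\mathcal{GLP}_{CS}$ are valid on $\mathsf{GJ4_{CS}}$ and $\mathsf{GLP_{CS}}$ respectively, that hypothesis is met, so $(CS)$ preserves validity on each class without modification. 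Assembling these pieces exactly as in Lem. \ref{lem:gjcssoundness} then yields both implications.
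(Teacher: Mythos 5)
Your proposal is correct and follows exactly the template the paper intends: the paper omits this proof entirely (``we thus again readily obtain the following''), deferring to the argument of Lem.~\ref{lem:gjcssoundness}, which is precisely what you reconstruct --- finite premises, deduction theorem, validity of the axioms via Prop.~\ref{prop:j+valid}, Prop.~\ref{prop:factivityvalid} and the positive-introspection proposition, and validity-preservation of $(MP)$ and $(CS)$. Your explicit check that Lem.~\ref{lem:csrulevalid} still goes through when $CS$ justifies the new axioms $(PI)$ and $(F)$ is a careful touch the paper leaves implicit, but it is the same argument, not a different route.
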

\subsection{Negative introspection}
\begin{definition}
We define the following extensions of $\mathcal{GJ}4_0$:
\begin{enumerate}
\item $\mathcal{GJ}45_0:=\mathcal{GJ}4_0 +(NI): \neg t:\phi\rightarrow ?t:\neg t:\phi$,
\item $\mathcal{GJT}45_0:=\mathcal{GJ}45_0 +(F):t:\phi\rightarrow\phi$.
\end{enumerate}
\end{definition}
For $CS$ being a constant specification for $\mathcal{GJ}45_0$ or $\mathcal{GJT}45_0$, we again write $\mathcal{GJ}45_{CS}$ or $\mathcal{GJT}45_{CS}$ for the respective extensions by the rule $(CS)$.
\begin{definition}\label{def:45models}
The class of $\mathsf{GJ4}$-models satisfying
\begin{enumerate}[(i)]
\item $\sim\mathcal{E}(w,t,\phi)\leq\mathcal{E}(w,?t,\neg t:\phi)$ for all $t\in Jt,\phi\in\mathcal{L}_J,w\in W$ \\(negative introspectivity of $\mathcal{E}$),
\item $\mathcal{E}(w,t,\phi)\leq e(w,t:\phi)$ for all $t\in Jt,\phi\in\mathcal{L}_J,w\in W$ \\(strong evidence),
\end{enumerate}
is denoted by $\mathsf{GJ45}$. The class of all $\mathsf{GJ45}$-model with \emph{reflexive} accessibility function is in the following denoted by $\mathsf{GJT45}$.
\end{definition}
\begin{proposition}
The scheme $\neg t:\phi\rightarrow ?t:\neg t:\phi$ is valid in the model class $\mathsf{GJ45}$.
\end{proposition}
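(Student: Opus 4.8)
The plan is to fix an arbitrary $\mathsf{GJ45}$-model $\mathfrak{M}=\langle W,R,\mathcal{E},e\rangle$ together with a world $w\in W$, and to establish validity by proving the single inequality $e(w,\neg t:\phi)\leq e(w,?t:\neg t:\phi)$; from this $e(w,\neg t:\phi\rightarrow ?t:\neg t:\phi)=1$ follows at once by the description of the residuum, and since $w$ is arbitrary we obtain $\mathsf{GJ45}$-validity. First I would unwind both sides semantically. On the left, $e(w,\neg t:\phi)=\sim e(w,t:\phi)$, and on the right, using the rephrased evaluation of the justification operator, $e(w,?t:\neg t:\phi)=\mathcal{E}(?t,\neg t:\phi,w)\odot e(w,\Box\neg t:\phi)$.

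The crucial preliminary observation is that the strong evidence condition (ii) delivers more than it literally asserts. Since $e(w,s:\psi)=\mathcal{E}(s,\psi,w)\odot e(w,\Box\psi)\leq\mathcal{E}(s,\psi,w)$ holds for \emph{every} term $s$ and formula $\psi$ ($\odot$ being the minimum), the inequality $\mathcal{E}(s,\psi,w)\leq e(w,s:\psi)$ forces both the equality $e(w,s:\psi)=\mathcal{E}(s,\psi,w)$ and the bound $\mathcal{E}(s,\psi,w)\leq e(w,\Box\psi)$. Applied with $s=t$, $\psi=\phi$, this rewrites the left-hand side as $e(w,\neg t:\phi)=\sim\mathcal{E}(t,\phi,w)$. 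Applied with $s=?t$, $\psi=\neg t:\phi$, it yields $\mathcal{E}(?t,\neg t:\phi,w)\leq e(w,\Box\neg t:\phi)$, a lower bound on the modal factor that will be the linchpin of the argument.

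With these in hand the main step is to show that the value $\sim\mathcal{E}(t,\phi,w)$ lies below both factors of the right-hand side. For the evidence factor this is exactly the negative introspectivity condition (i), namely $\sim\mathcal{E}(t,\phi,w)\leq\mathcal{E}(?t,\neg t:\phi,w)$. For the modal factor I would chain negative introspectivity with the bound just derived, obtaining $\sim\mathcal{E}(t,\phi,w)\leq\mathcal{E}(?t,\neg t:\phi,w)\leq e(w,\Box\neg t:\phi)$. Because $\odot$ is the minimum, a value below both factors is below their product, so $\sim\mathcal{E}(t,\phi,w)\leq\mathcal{E}(?t,\neg t:\phi,w)\odot e(w,\Box\neg t:\phi)=e(w,?t:\neg t:\phi)$, which together with the rewriting of the left-hand side is precisely the desired inequality.

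The step I expect to be the main obstacle is the treatment of the modal factor $e(w,\Box\neg t:\phi)$. The tempting route is to argue pointwise across worlds, trying to show that $\neg t:\phi$ is forced at every $R$-accessible world, i.e. that $\mathcal{E}(t,\phi,w)=0$ propagates to $\mathcal{E}(t,\phi,v)=0$ whenever $R(w,v)>0$. This approach stalls: the monotonicity of $\mathcal{E}$ with respect to $R$ available in $\mathsf{GJ4}$ runs in the \emph{wrong} direction, bounding $\mathcal{E}(t,\phi,v)$ only from below, and min-transitivity of $R$ offers no remedy, so one would seem to need an extra frame condition such as symmetry. The resolution, and the point worth emphasizing, is that one should not argue across worlds at all: invoking strong evidence at the \emph{compound} term $?t$ and formula $\neg t:\phi$ bounds $e(w,\Box\neg t:\phi)$ from below by $\mathcal{E}(?t,\neg t:\phi,w)$ in a single move, after which negative introspectivity carries the whole argument. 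In particular, neither transitivity of $R$ nor monotonicity of $\mathcal{E}$ is needed for this proposition.
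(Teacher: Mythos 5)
Your proof is correct and takes essentially the same route as the paper's: strong evidence applied to $(t,\phi)$ to rewrite $e(w,\neg t:\phi)$ as $\sim\mathcal{E}(t,\phi,w)$, negative introspectivity as the middle link, and strong evidence applied to $(?t,\neg t:\phi)$ to reach $e(w,?t:\neg t:\phi)$. The only cosmetic difference is that where the paper cites strong evidence once more for the final inequality $\mathcal{E}(?t,\neg t:\phi,w)\leq e(w,?t:\neg t:\phi)$, you unfold $e(w,?t:\neg t:\phi)$ into its two $\odot$-factors and bound each separately---which amounts to the observation, also made explicitly in the paper, that strong evidence forces the equality $\mathcal{E}(s,\psi,w)=e(w,s:\psi)$.
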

\begin{proof}
Let $\mathfrak{M}=\langle W,R,\mathcal{E},e\rangle$ be a $\mathsf{GJ45}$-model. We have that
\begin{align*}
e(w,\neg t:\phi)=\sim e(w,t:\phi)&\leq\;\sim\mathcal{E}(w,t,\phi)     &&\text{(by strong evid., Lem. \ref{lem:mintnormprop})}\\
                                 &\leq\;\mathcal{E}(w,?t,\neg t:\phi) &&\text{(by neg. intro.)}\\
                                 &\leq\; e(w,?t:\neg t:\phi)          &&\text{(by strong evid.)}
\end{align*}
for any $t\in Jt,\phi\in\mathcal{L}_J,w\in W$.
\end{proof}
\begin{lemma}[Soundness of $\mathcal{GJ}45_{CS}, \mathcal{GJT}45_{CS}$]\label{lem:45soundness}
For any $\Gamma\cup\{\phi\}\subseteq\mathcal{L}_J$, we have
\begin{enumerate}
\item $\Gamma\vdash_{\mathcal{GJ}45_{CS}}\phi$ implies $\Gamma\models_{\mathsf{GJ45_{CS}}\leq}\phi$,
\item $\Gamma\vdash_{\mathcal{GJT}45_{CS}}\phi$ implies $\Gamma\models_{\mathsf{GJT45_{CS}}\leq}\phi$.
\end{enumerate}
\end{lemma}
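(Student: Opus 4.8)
The plan is to reproduce, at the level of structure, the template of the earlier soundness lemmas (Lem. \ref{lem:gjcssoundness}, \ref{lem:gjtcssoundness}, \ref{lem:j4lpcssoundness}): it suffices to check that every axiom scheme of the calculus in question is valid in the relevant model class and that both inference rules (MP) and (CS) preserve validity there, after which the generic argument closes the proof. So I would first assemble the required validities and then invoke the standard reasoning.

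For part (1), the calculus $\mathcal{GJ}45_{CS}$ is generated by the propositional axioms (P), the justification axioms (J), (+), the positive-introspection axiom (PI), the negative-introspection axiom (NI), and the rules (MP), (CS). By Definition \ref{def:45models} we have the inclusions $\mathsf{GJ45}\subseteq\mathsf{GJ4}\subseteq\mathsf{GJ}$, so (P), (J), (+) are valid by Prop. \ref{prop:j+valid}, the scheme (PI) is valid by the positive-introspection proposition (every $\mathsf{GJ45}$-model being a $\mathsf{GJ4}$-model), and (NI) is valid by the negative-introspection proposition just established. Validity of (MP) is immediate, and (CS) preserves validity in $\mathsf{GJ45_{CS}}$ by the argument of Lem. \ref{lem:csrulevalid}, which only uses the $\mathcal{E}$-closure conditions and the constant-specification-respecting property, both inherited by the subclass. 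For part (2), $\mathcal{GJT}45_{CS}$ adds only (F) and $\mathsf{GJT45}$ consists of the reflexive $\mathsf{GJ45}$-models; hence all the above validities persist, and (F) is valid by Prop. \ref{prop:factivityvalid}.

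With these validities in hand, I would conclude both parts exactly as in Lem. \ref{lem:gjcssoundness}: from $\Gamma\vdash\phi$ extract a finite $\{\psi_1,\dots,\psi_n\}\subseteq\Gamma$ with $\{\psi_1,\dots,\psi_n\}\vdash\phi$, apply the deduction theorem repeatedly to obtain $\vdash\bigwedge_{i=1}^n\psi_i\rightarrow\phi$, deduce $\models\bigwedge_{i=1}^n\psi_i\rightarrow\phi$ in the relevant class from the collected axiom validities and rule preservation, and evaluate this at each world to obtain $e(w,\Gamma)\le e(w,\bigwedge_{i=1}^n\psi_i)\le e(w,\phi)$. The work here is essentially bookkeeping rather than a genuine obstacle; the only point needing care is that each newly added scheme is validated precisely by the defining conditions of its model class and that the chain $\mathsf{GJT45}\subseteq\mathsf{GJ45}\subseteq\mathsf{GJ4}\subseteq\mathsf{GJ}$ makes all inherited validities available — in particular that the proof of Lem. \ref{lem:csrulevalid} still applies unchanged in these richer classes.
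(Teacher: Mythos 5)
Your proposal is correct and matches the paper's intent exactly: the paper states this lemma without proof (``we thus again readily obtain the following''), relying on precisely the template you reproduce --- validity of (J), (+), (PI), (NI), and, for part (2), (F) via reflexivity, together with preservation under (MP) and (CS), followed by the finite-subset/deduction-theorem argument of Lem.~\ref{lem:gjcssoundness}. The only point worth noting is that the (CS) argument of Lem.~\ref{lem:csrulevalid} also uses validity of the axioms in the relevant model class (not just the $\mathcal{E}$-closure and $CS$-respecting conditions), but since you establish those validities first, your argument goes through as stated.
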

\section{Completeness}
In the following, let $\mathcal{GJL}_0$ be one of the previously introduced systems of G\"odel justification logic, i.e.
\[
\mathcal{GJL}_0\in\{\mathcal{GJ}_0,\mathcal{GJT}_0,\mathcal{GJ}4_0,\mathcal{GLP}_0,\mathcal{GJ}45_0,\mathcal{GJT}45_0\}
\]
and let $CS$ be a constant specification for $\mathcal{GJL}_0$. With $\mathsf{GJL_{CS}}$, we denote the corresponding class of G\"odel justification models (respecting the given constant specification) for which we have established soundness.
\begin{definition}\label{def:startranslation}
We define the language $\mathcal{L}_0^\star:=\mathcal{L}_0(Var^\star)$ where
\[
Var^\star:=Var\cup\{\phi_t\mid\phi\in\mathcal{L}_J, t\in Jt\}.
\]
The translation function $\star:\mathcal{L}_J\to\mathcal{L}_0^\star$ is defined inductively as follows:
\begin{enumerate}[(i)]
\item $\bot\mapsto\bot$,
\item $p\mapsto p$, $p\in Var$,
\item $(\phi\land\psi)\mapsto\phi^\star\land\psi^\star$,
\item $(\phi\rightarrow\psi)\mapsto\phi^\star\rightarrow\psi^\star$,
\item $t:\phi\mapsto\phi_t$.
\end{enumerate}
\end{definition}
\begin{lemma}
$\star$ is a bijection.
\end{lemma}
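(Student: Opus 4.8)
The plan is to exhibit an explicit two-sided inverse $\sharp\colon\mathcal{L}_0^\star\to\mathcal{L}_J$ and to verify that $\sharp\circ\star=\mathrm{id}_{\mathcal{L}_J}$ and $\star\circ\sharp=\mathrm{id}_{\mathcal{L}_0^\star}$, from which bijectivity of $\star$ is immediate. Before defining $\sharp$, I would make explicit a convention that is already tacit in the definition of $Var^\star$: the new propositional variables $\phi_t$ form a family indexed \emph{injectively} by pairs $(\phi,t)\in\mathcal{L}_J\times Jt$, and this family is disjoint from $Var$. Thus the atoms of $\mathcal{L}_0^\star$ split as the disjoint union $Var\sqcup\{\phi_t\mid\phi\in\mathcal{L}_J,\,t\in Jt\}$, and from a new atom one can uniquely recover the pair $(\phi,t)$ that produced it. Together with the standard unique readability of the grammars defining $\mathcal{L}_J$ and $\mathcal{L}_0^\star$ (guaranteed by the bracketing conventions), this licenses the recursive definitions and structural inductions below.

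With this in hand I define $\sharp$ by recursion on the structure of $\mathcal{L}_0^\star$-formulas: $\bot\mapsto\bot$; $p\mapsto p$ for $p\in Var$; $\phi_t\mapsto t:\phi$ for each new atom; and $(\alpha\land\beta)\mapsto\alpha^\sharp\land\beta^\sharp$, $(\alpha\rightarrow\beta)\mapsto\alpha^\sharp\rightarrow\beta^\sharp$ on compound formulas. Well-definedness of the atomic clause $\phi_t\mapsto t:\phi$ is exactly where the injectivity-and-disjointness convention is used: without injectivity the value $t:\phi$ would not be determined by the symbol $\phi_t$, and without disjointness an atom could be both some $p\in Var$ and some $\phi_t$, giving conflicting definitions. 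A trivial induction then shows that $\sharp$ indeed lands in $\mathcal{L}_J$.

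Next I would establish $\sharp\circ\star=\mathrm{id}_{\mathcal{L}_J}$ by structural induction on $\phi\in\mathcal{L}_J$. The $\bot$- and atomic cases are immediate, and the $\land$- and $\rightarrow$-cases follow directly from the induction hypothesis since both $\star$ and $\sharp$ commute with these connectives. The only genuinely new case is the modal one: here $\star(t:\phi)=\phi_t$ and $\sharp(\phi_t)=t:\phi$, so $\sharp(\star(t:\phi))=t:\phi$; notably no induction hypothesis on $\phi$ is needed, since $\star$ collapses $t:\phi$ to a single atom and $\sharp$ does not recurse into its index. Symmetrically, $\star\circ\sharp=\mathrm{id}_{\mathcal{L}_0^\star}$ is proved by structural induction on $\alpha\in\mathcal{L}_0^\star$, the crucial case being a new atom $\phi_t$, for which $\sharp(\phi_t)=t:\phi$ and then $\star(t:\phi)=\phi_t$.

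Since both composites are the identity, $\star$ is simultaneously injective and surjective, hence a bijection. The only real subtlety, and therefore the point I would state most carefully, is the well-definedness of $\sharp$ on the new atoms; everything else is routine structural induction. In particular it is worth emphasising that although the index $\phi$ in $\phi_t$ ranges over the full language $\mathcal{L}_J$, each $\phi_t$ is treated as a single atom of $\mathcal{L}_0^\star$, so the recursion defining $\sharp$ terminates and no circularity arises.
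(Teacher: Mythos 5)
Your proof is correct and follows essentially the route the paper sketches (the paper omits the details): your map $\sharp$ is precisely the paper's replacement of each atom $\phi_t$ by $t:\phi$ used to witness surjectivity, and the identity $\sharp\circ\star=\mathrm{id}_{\mathcal{L}_J}$ packages the structural induction the paper invokes for injectivity. Your explicit attention to the tacit convention that the atoms $\phi_t$ are indexed injectively by pairs $(\phi,t)$ and are disjoint from $Var$ is a worthwhile addition, as that is the one point where well-definedness of the inverse could genuinely fail.
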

The concrete proof is omitted here, however surjectivity follows almost directly by considering a formula in $\mathcal{L}_0^\star$ and replacing every $\phi_t$ by $t:\phi$. The resulting formula lies in $\mathcal{L}_J$ and has the expected translation as $\star$ distributes over all propositional connectives. Injectivity follows from a simple induction on the structure of the formulas.

$\star$ is naturally extended to sets of formulas via $\Gamma^\star:=\{\phi^\star\mid\phi\in\Gamma\}$.\footnote{Note, that $\mathcal{L}_0^\star$ and similarly $Var^\star$ from Def. \ref{def:startranslation} are an abuse of notation and do not denote the $\star$-translation of $\mathcal{L}_0$ and $Var$ respectively. For this, one may write $(\mathcal{L}_0)^\star$ or $(Var)^\star$.} We denote by $Th_{\mathcal{GJL}_{CS}}$ the set of theorems of the calculus $\mathcal{GJL}_{CS}$ for some constant specification $CS$, i.e. $Th_{\mathcal{GJL}_{CS}}:=\{\phi\in\mathcal{L}_J\mid\;\vdash_{\mathcal{GJL}_{CS}}\phi\}$.
\begin{lemma}\label{lem:modalproptransinv}
Let $\Gamma\cup\{\phi\}\subseteq\mathcal{L}_J$. Then $\Gamma\vdash_{\mathcal{GJL}_{CS}}\phi\text{ iff }\Gamma^\star\cup (Th_{\mathcal{GJL}_{CS}})^\star\vdash_\mathcal{G}\phi^\star$.
\end{lemma}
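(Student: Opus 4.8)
The plan is to prove both directions by induction on the length of a derivation, using that $\star$ is a bijection (by the preceding lemma) which, directly from Definition \ref{def:startranslation}, commutes with all propositional connectives, i.e. $(\alpha\land\beta)^\star=\alpha^\star\land\beta^\star$, $(\alpha\rightarrow\beta)^\star=\alpha^\star\rightarrow\beta^\star$ and $\bot^\star=\bot$; consequently the inverse $\star^{-1}$ commutes with them as well. The conceptual point driving everything is that all genuinely justification-specific axioms, as well as every conclusion of the rule $(CS)$, are \emph{theorems} of $\mathcal{GJL}_{CS}$, hence get absorbed into the premise set $(Th_{\mathcal{GJL}_{CS}})^\star$, leaving $\mathcal{G}$ to carry out only purely propositional reasoning over the enriched variables.

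For the forward direction I would assume $\Gamma\vdash_{\mathcal{GJL}_{CS}}\phi$, fix a derivation $\psi_1,\dots,\psi_k=\phi$, and show by induction on $i$ that $\Gamma^\star\cup (Th_{\mathcal{GJL}_{CS}})^\star\vdash_\mathcal{G}\psi_i^\star$. If $\psi_i\in\Gamma$ then $\psi_i^\star\in\Gamma^\star$. If $\psi_i$ is an instance of a propositional axiom $(P)$, then since $\star$ commutes with the connectives, $\psi_i^\star$ is an instance of the same $\mathcal{G}$-axiom scheme over $\mathcal{L}_0^\star$, so $\vdash_\mathcal{G}\psi_i^\star$. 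If $\psi_i$ is an instance of one of $(J),(+),(F),(PI),(NI)$ or is a conclusion of $(CS)$, then $\psi_i$ is a theorem, so $\psi_i^\star\in (Th_{\mathcal{GJL}_{CS}})^\star$ is available as a premise. Finally, if $\psi_i$ arises by $(MP)$ from $\psi_j$ and $\psi_l=\psi_j\rightarrow\psi_i$, then $\psi_l^\star=\psi_j^\star\rightarrow\psi_i^\star$, and the induction hypothesis together with $(MP)$ in $\mathcal{G}$ yields $\psi_i^\star$.

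For the backward direction I would assume $\Gamma^\star\cup (Th_{\mathcal{GJL}_{CS}})^\star\vdash_\mathcal{G}\phi^\star$, fix a $\mathcal{G}$-derivation $\chi_1,\dots,\chi_k=\phi^\star$, and apply the inverse bijection $\star^{-1}$ to every line, arguing that $\star^{-1}(\chi_1),\dots,\star^{-1}(\chi_k)=\phi$ completes to a $\mathcal{GJL}_{CS}$-derivation from $\Gamma$. An instance of a $\mathcal{G}$-axiom scheme over $\mathcal{L}_0^\star$ pulls back, since $\star^{-1}$ commutes with the connectives, to an instance of the same scheme over $\mathcal{L}_J$, which is a $(P)$-axiom of $\mathcal{GJL}_{CS}$; a premise in $\Gamma^\star$ pulls back to a member of $\Gamma$; a premise in $(Th_{\mathcal{GJL}_{CS}})^\star$ pulls back to a theorem of $\mathcal{GJL}_{CS}$, whose own $\mathcal{GJL}_{CS}$-derivation can be spliced in at that point; and $(MP)$ is preserved because $\star^{-1}(\chi_j\rightarrow\chi_i)=\star^{-1}(\chi_j)\rightarrow\star^{-1}(\chi_i)$.

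The routine part is pure bookkeeping on derivations; the one step deserving care, and the only place where the structure of $\star$ is genuinely used, is verifying that $\star$ (and hence $\star^{-1}$) maps instances of an axiom scheme to instances of the \emph{same} scheme, so that the propositional axioms translate faithfully between $\mathcal{G}$ and the $(P)$-fragment of $\mathcal{GJL}_{CS}$ while the justification axioms and $(CS)$-conclusions are handled entirely through the theorem set $Th_{\mathcal{GJL}_{CS}}$. I expect no serious obstacle beyond making this commutation-with-schemes observation precise and keeping the splicing of theorem-derivations explicit.
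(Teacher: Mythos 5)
Your proposal is correct and follows essentially the same route as the paper: both directions by induction on the length of the derivation, with premises translating to $\Gamma^\star$, justification axioms and $(CS)$-conclusions absorbed into $(Th_{\mathcal{GJL}_{CS}})^\star$, $(MP)$ preserved because $\star$ (and $\star^{-1}$) commutes with $\rightarrow$, and bijectivity of $\star$ used to pull $\mathcal{G}$-lines back into $\mathcal{L}_J$. The only cosmetic difference is that in the forward direction you translate propositional axiom instances directly into $\mathcal{G}$-axiom instances, whereas the paper routes \emph{all} axioms uniformly through the theorem set; both are fine, since the scheme-commutation fact you single out is needed for the backward direction in either version.
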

\begin{proof}
Let $\Gamma\subseteq\mathcal{L}_J$ be arbitrary.
\begin{description}
\item [$\Rightarrow$] We prove by strong induction on $k$ that, for any $\phi\in\mathcal{L}_J$, if $\Gamma\vdash_{\mathcal{GJL}_{CS}}\phi$ with a proof of length $k$, then $\Gamma^\star\cup(Th_{\mathcal{GJL}_{CS}})^\star\vdash_\mathcal{G}\phi^\star$.
\begin{description}
\item [(IB)] Let $\phi\in\mathcal{L}_J$ be arbitrary. Suppose $\Gamma\vdash_{\mathcal{GJL}_{CS}}\phi$ with a proof of length $1$, then either
\begin{enumerate}[(i)]
\item $\phi\in\Gamma$,
\item $\phi$ is an axiom instance of $\mathcal{GJL}_{CS}$, or
\item $\phi$ was obtained by ($CS$).
\end{enumerate}
For (i), we have $\phi^\star\in\Gamma^\star$, from which the claim follows. For (ii) and (iii), we have that $\phi\in Th_{\mathcal{GJL}_{CS}}$, i.e. $\phi^\star\in (Th_{\mathcal{GJL}_{CS}})^\star$, which gives the claim.
\item [(IS)] Let $k\geq 1$. Suppose that for all $\chi\in\mathcal{L}_J$, if $\Gamma\vdash_{\mathcal{GJL}_{CS}}\chi$ with a proof of length $l\leq k$, then $\Gamma^\star\cup (Th_{\mathcal{GJL}_{CS}})^\star\vdash_\mathcal{G}\chi^\star$. Let $\phi\in\mathcal{L}_J$ be arbitrary and suppose that $\Gamma\vdash_{\mathcal{GJL}_{CS}}\phi$ with a proof of length $k+1$. Then either $\phi$ was obtained as in \textbf{(IB)}, in which case we can proceed similarly, or $\phi$ was obtained by ($MP$), i.e. we have $\Gamma\vdash_{\mathcal{GJL}_{CS}}\psi$ and $\Gamma\vdash_{\mathcal{GJL}_{CS}}\psi\rightarrow\phi$ for some $\psi\in\mathcal{L}_J$. By the induction hypothesis, as they have shorter proofs, we have $\Gamma^\star\cup (Th_{\mathcal{GJL}_{CS}})^\star\vdash_\mathcal{G}\psi^\star$ and $\Gamma^\star\cup (Th_{\mathcal{GJL}_{CS}})^\star\vdash_\mathcal{G}\psi^\star\rightarrow\phi^\star$. Thus, by ($MP$) in $\mathcal{G}$, we deduce $\Gamma^\star\cup (Th_{\mathcal{GJL}_{CS}})^\star\vdash_\mathcal{G}\phi^\star$.
\end{description}
\item [$\Leftarrow$] We again show by a strong induction on $k$ that, for any $k\geq 1$ and for any $\phi\in\mathcal{L}_J$, if $\Gamma^\star\cup (Th_{\mathcal{GJL}_{CS}})^\star\vdash_\mathcal{G}\phi^\star$ with a proof of length $k$, then $\Gamma\vdash_{\mathcal{GJL}_{CS}}\phi$.
\begin{description}
\item [(IB)] Let $\phi\in\mathcal{L}_J$ be arbitrary such that $\Gamma^\star\cup (Th_{\mathcal{GJL}_{CS}})^\star\vdash_\mathcal{G}\phi^\star$ has a proof of length $1$. Then either
\begin{enumerate}[(i)]
\item $\phi^\star\in\Gamma^\star$,
\item $\phi^\star\in(Th_{\mathcal{GJL}_{CS}})^\star$, or
\item $\phi^\star$ is an axiom instance in $\mathcal{G}$.
\end{enumerate}
For (i), we have that $\phi\in\Gamma$, i.e. $\Gamma\vdash_{\mathcal{GJL}_{CS}}\phi$, while for (ii), we have that $\phi\in Th_{\mathcal{GJL}_{CS}}$ and thus $\vdash_{\mathcal{GJL}_{CS}}\phi$. Finally, if $\phi^\star$ is an axiom instance in $\mathcal{G}$, then $\phi$, resulting by replacing every occurrence of some $\psi_t$ by $t:\psi$, is an instance of the same axiom in $\mathcal{GJL}_{CS}$.
\item [(IS)] For $k\geq 1$, suppose that for all $\chi\in\mathcal{L}_J$, if $\Gamma^\star\cup (Th_{\mathcal{GJL}_{CS}})^\star\vdash_\mathcal{G}\chi^\star$ with a proof of length $l\leq k$, then $\Gamma\vdash_{\mathcal{GJL}_{CS}}\chi$. Suppose that $\Gamma^\star\cup (Th_{\mathcal{GJL}_{CS}})^\star\vdash_\mathcal{G}\phi^\star$ has a proof of length $k+1$ for an arbitrary $\phi\in\mathcal{L}_J$. Again $\phi^\star$ may have been obtained as in \textbf{(IB)}, where we proceed as shown. Otherwise, $\phi^\star$ was again obtained by ($MP$), i.e. $\Gamma^\star\cup (Th_{\mathcal{GJL}_{CS}})^\star\vdash_\mathcal{G}\psi^\star\rightarrow\phi^\star$ and $\Gamma^\star\cup (Th_{\mathcal{GJL}_{CS}})^\star\vdash_\mathcal{G}\psi^\star$ for some $\psi\in\mathcal{L}_J$ as $\star$ is bijective between $\mathcal{L}_J$ and $\mathcal{L}_0^\star$. By the definition of $\star$, we have $\Gamma^\star\cup (Th_{\mathcal{GJL}_{CS}})^\star\vdash_\mathcal{G}(\psi\rightarrow\phi)^\star$. By the induction hypothesis, as the corresponding proofs are shorter, we have $\Gamma\vdash_{\mathcal{GJL}_{CS}}\psi$ and $\Gamma\vdash_{\mathcal{GJL}_{CS}}\psi\rightarrow\phi$, i.e. by ($MP$) in $\mathcal{GJL}_{CS}$ we deduce $\Gamma\vdash_{\mathcal{GJL}_{CS}}\phi$.
\end{description}
\end{description}
\end{proof}
\begin{definition}[Canonical model for $\mathcal{GJL}_{CS}$]
The \emph{canonical model for} $\mathcal{GJL}_{CS}$, $\mathfrak{M}^c(\mathcal{GJL}_{CS})=\langle W^c,R^c,\mathcal{E}^c,e^c\rangle$, is defined as follows:
\begin{enumerate}
\item $W^c:=\{v\in\mathsf{Ev}(\mathcal{L}_0^\star)\mid v((Th_{\mathcal{GJL}_{CS}})^\star)=1\}$,
\item $R^c(v,w):=\begin{cases}1, &\text{if }\forall\phi\in\mathcal{L}_J:\forall t\in Jt:v(\phi_t)\leq w(\phi^\star)\\0, &\text{otherwise}\end{cases}$ for all $v,w\in W^c$,
\item $\mathcal{E}^c(v,t,\phi):=v(\phi_t)$ for all $v\in W^c$, $t\in Jt$ and $\phi\in\mathcal{L}_J$,
\item $e^c(v,p):=v(p)$ for all $v\in W^c, p\in Var$.
\end{enumerate}
\end{definition}
$e^c$ is extended from $Var$ to $\mathcal{L}_J$ as before.
\begin{lemma}[Truth lemma]\label{lem:gjlcscanonmodtruth}
Let $\mathfrak{M}^c(\mathcal{GJL}_{CS})=\langle W^c,R^c,\mathcal{E}^c,e^c\rangle$. For all $\phi\in\mathcal{L}_J$ and any $v\in W^c$: $e^c(v,\phi)=v(\phi^\star)$.
\end{lemma}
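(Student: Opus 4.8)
The plan is to proceed by structural induction on $\phi\in\mathcal{L}_J$, showing $e^c(v,\phi)=v(\phi^\star)$ for every $v\in W^c$. The base cases and the propositional connectives are routine: for $\phi=\bot$ we have $e^c(v,\bot)=0=v(\bot)=v(\bot^\star)$; for $\phi=p\in Var$ the definition $e^c(v,p)=v(p)=v(p^\star)$ gives it directly; and for $\phi=\psi\land\chi$ or $\phi=\psi\rightarrow\chi$ the claim follows because both the $\mathsf{GJ}$-evaluation $e^c$ and the propositional evaluation $v$ interpret $\land$ and $\rightarrow$ by the very same truth functions $\odot$ and $\Rightarrow$, so the induction hypothesis $e^c(v,\psi)=v(\psi^\star)$, $e^c(v,\chi)=v(\chi^\star)$ together with $(\psi\land\chi)^\star=\psi^\star\land\chi^\star$ and $(\psi\rightarrow\chi)^\star=\psi^\star\rightarrow\chi^\star$ closes these cases.

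The only genuine work is the modal case $\phi=t:\psi$, where $(t:\psi)^\star=\psi_t$, so I must show
\[
e^c(v,t:\psi)=\mathcal{E}^c(t,\psi,v)\odot\inf_{w\in W^c}\{R^c(v,w)\Rightarrow e^c(w,\psi)\}=v(\psi_t).
\]
By the induction hypothesis $e^c(w,\psi)=w(\psi^\star)$ for every $w\in W^c$, and by definition $\mathcal{E}^c(t,\psi,v)=v(\psi_t)$, so the target reduces to proving
\[
v(\psi_t)\odot\inf_{w\in W^c}\{R^c(v,w)\Rightarrow w(\psi^\star)\}=v(\psi_t).
\]
Since $\odot$ is the minimum, this holds precisely when $\inf_{w\in W^c}\{R^c(v,w)\Rightarrow w(\psi^\star)\}\geq v(\psi_t)$, i.e. when $v(\psi_t)\leq R^c(v,w)\Rightarrow w(\psi^\star)$ for every $w\in W^c$. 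This is the crux and the main obstacle. For worlds $w$ with $R^c(v,w)=0$ the residuum is $1$ and the inequality is trivial, so only the case $R^c(v,w)=1$ matters; but by the definition of $R^c$, $R^c(v,w)=1$ means exactly that $v(\chi_s)\leq w(\chi^\star)$ for all $\chi\in\mathcal{L}_J,s\in Jt$, which applied to $\chi=\psi,s=t$ yields $v(\psi_t)\leq w(\psi^\star)=R^c(v,w)\Rightarrow w(\psi^\star)$, as required. Hence the infimum is at least $v(\psi_t)$ and the product collapses to $v(\psi_t)$, completing the modal case.

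I expect the subtle point to be confirming that the $\inf$ is genuinely bounded below by $v(\psi_t)$ rather than merely being witnessed by one world; the argument above establishes this uniformly over all $w\in W^c$, and notably it does not require exhibiting a witnessing world attaining the bound, so no saturation or Lindenbaum-style construction is needed here (that machinery will instead be needed to prove the reverse direction of completeness, where one must show $W^c$ is rich enough). The only facts invoked are the definitions of $R^c$, $\mathcal{E}^c$, $e^c$, the induction hypothesis, and the elementary identity $a\odot b=a$ whenever $b\geq a$ for the minimum t-norm; in particular, the monotonicity properties of Lemma~\ref{lem:mintnormprop} suffice and no appeal to the closure conditions on $\mathcal{E}$ is required for this lemma.
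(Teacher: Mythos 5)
Your proof is correct and follows essentially the same route as the paper: structural induction where the propositional cases are immediate, and in the modal case one uses $\mathcal{E}^c(t,\psi,v)=v(\psi_t)$ together with the definition of $R^c$ to bound $e^c(v,\Box\psi)=\inf_{w\in W^c}\{R^c(v,w)\Rightarrow w(\psi^\star)\}$ below by $v(\psi_t)$, so that the minimum collapses to $v(\psi_t)$. Your case split on $R^c(v,w)\in\{0,1\}$ is just a minor rephrasing of the paper's restriction of the infimum to accessible worlds, and your closing observations (no use of the closure conditions on $\mathcal{E}$, no saturation construction needed) are accurate.
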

\begin{proof}
Induction on the structure of the formula $\phi$. 
\begin{description}
\item [(IB)] For the base case of $\phi=p\in Var$, we have $e^c(v,p)=v(p)=v(p^\star)$ for all $v\in W^c$. Similarly, for $\phi=\bot$, we have $e^c(v,\bot)=0=v(\bot)=v(\bot^\star)$.
\item [(IS)] For the induction step, we divide between the different connectives. 

We have, for $\phi=\psi\rightarrow\chi$, that $e^c(v,\psi\rightarrow\chi) =e^c(v,\psi)\Rightarrow e^c(v,\chi)=v(\psi^\star)\Rightarrow v(\chi^\star)=v(\psi^\star\rightarrow\chi^\star)=v((\psi\rightarrow\chi)^\star)$ straightforwardly by \textbf{(IH)}. Similarly, for $\phi=\psi\land\chi$, we obtain $e^c(v,\psi\land\chi)=v((\psi\land\chi)^\star)$ by \textbf{(IH)} as well.\\

Hence, we are left with showing that $e^c(v,t:\psi)=v(\psi_t)$ for an arbitrary $v\in W^c$.
As $\mathcal{E}^c(v,t,\psi)=v(\psi_t)$ per definition, it holds that
\[
e^c(v,t:\psi)=\mathcal{E}^c(v,t,\psi)\odot e^c(v,\Box\psi)=v(\psi_t)\odot e^c(v,\Box\psi).
\]
Thus, it suffices to show that $e^c(v,\Box\psi)\geq v(\psi_t)$ by the laws of $\odot=\min$. Now, by \textbf{(IH)}, we have $e^c(w,\psi)=w(\psi^\star)$ for any $w\in W^c$ and thus
\[
e^c(v,\Box\psi)=\inf_{w\in W^c}\{R^c(v,w)\Rightarrow e^c(w,\psi)\}=\inf_{w\in W^c}\{R^c(v,w)\Rightarrow w(\psi^\star)\}.
\]
As $R^c$ is crisp, we now have
\[
\inf_{w\in W^c}\{R^c(v,w)\Rightarrow w(\psi^\star)\}=\inf\{w(\psi^\star)\mid w\in W^c, R^c(v,w)=1\}.
\]
Now, for all $w\in W^c$ such that $R^c(v,w)=1$, we have $v(\psi_t)\leq w(\psi^\star)$, i.e.
\[
v(\psi_t)\leq\inf\{w(\psi^\star)\mid R^c(v,w)=1\}=e^c(v,\Box\psi).
\]
Thus $e^c(v,t:\psi)=v(\psi_t)\odot e^c(v,\Box\psi)=\min\{v(\psi_t),e^c(v,\Box\psi)\}=v(\psi_t)$.
\end{description}
\end{proof}
\begin{lemma}\label{lem:canonmodstrongevid}
$\mathfrak{M}^c(\mathcal{GJL}_{CS})=\langle W^c,R^c,\mathcal{E}^c,e^c\rangle$ has the strong evidence property, i.e.
\[
\mathcal{E}^c(v,t,\phi)\leq e^c(v,t:\phi)
\]
for all $v\in W^c$, $\phi\in\mathcal{L}_J$, $t\in Jt$.
\end{lemma}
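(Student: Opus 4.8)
The plan is to derive this directly from the Truth lemma (Lemma \ref{lem:gjlcscanonmodtruth}), since that lemma has already established the only nontrivial inequality needed here. Recall that strong evidence asks for $\mathcal{E}^c(t,\phi,v)\leq e^c(v,t:\phi)$. Unwinding the two sides via the canonical definitions, $\mathcal{E}^c(t,\phi,v)=v(\phi_t)$ holds by definition of $\mathcal{E}^c$, while $e^c(v,t:\phi)=\mathcal{E}^c(t,\phi,v)\odot e^c(v,\Box\phi)=v(\phi_t)\odot e^c(v,\Box\phi)=\min\{v(\phi_t),e^c(v,\Box\phi)\}$ by the evaluation clause for the modality together with $\odot=\min$. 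Hence the claimed inequality is equivalent to the single assertion $v(\phi_t)\leq e^c(v,\Box\phi)$.

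First I would invoke the Truth lemma, which states $e^c(v,\chi)=v(\chi^\star)$ for every $\chi\in\mathcal{L}_J$ and every $v\in W^c$. Applying this to $\chi=t:\phi$ and using that the translation $\star$ sends $t:\phi\mapsto\phi_t$, I obtain $e^c(v,t:\phi)=v((t:\phi)^\star)=v(\phi_t)$. Comparing with $\mathcal{E}^c(t,\phi,v)=v(\phi_t)$ shows that the two quantities are in fact \emph{equal}, so strong evidence holds a fortiori. Concretely, this means $v(\phi_t)=\min\{v(\phi_t),e^c(v,\Box\phi)\}$, which is exactly the inequality $v(\phi_t)\leq e^c(v,\Box\phi)$ isolated above.

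I do not expect any genuine obstacle here, precisely because the one substantive estimate $v(\phi_t)\leq e^c(v,\Box\phi)$ was already carried out inside the proof of the Truth lemma (using that $R^c$ is crisp and that $R^c(v,w)=1$ forces $v(\phi_t)\leq w(\phi^\star)$ for all $w$ in the accessible set). The present statement is therefore best viewed as recording, for later use in the completeness arguments, the already-available identity $\mathcal{E}^c(t,\phi,v)=e^c(v,t:\phi)$; the proof is a one-line appeal to Lemma \ref{lem:gjlcscanonmodtruth} and the definition of $\mathcal{E}^c$, with the inequality following since $a\leq a$ always holds. If one prefers to avoid citing the full Truth lemma, the alternative is to reprove $v(\phi_t)\leq e^c(v,\Box\phi)$ inline by the same crispness-of-$R^c$ argument, but this merely duplicates existing work.
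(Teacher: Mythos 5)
Your proposal is correct and is essentially identical to the paper's own proof: both obtain $\mathcal{E}^c(t,\phi,v)=v(\phi_t)=e^c(v,t:\phi)$ by combining the definition of $\mathcal{E}^c$ with the Truth lemma (Lemma \ref{lem:gjlcscanonmodtruth}) applied to $t:\phi$, so the inequality holds as an equality. Your additional remarks (that the substantive estimate $v(\phi_t)\leq e^c(v,\Box\phi)$ lives inside the Truth lemma's proof, and that it could be redone inline) are accurate but not needed.
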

\begin{proof}
By the Truth lemma \ref{lem:gjlcscanonmodtruth} and the definition of $\mathfrak{M}^c(\mathcal{GJL}_{CS})$, we have $\mathcal{E}^c(v,t,\phi)=v(\phi_t)=e^c(v,t:\phi)$.
\end{proof}
Note, that for a $\mathsf{GJ}$-model $\mathfrak{M}=\langle W,R,\mathcal{E},e\rangle$, the strong evidence property $\mathcal{E}(w,t,\phi)\leq e(w,t:\phi)$ is equivalent with the stronger assertion $\mathcal{E}(w,t,\phi)=e(w,t:\phi)$ as $e(w,t:\phi)=\mathcal{E}(w,t,\phi)\odot e(w,\Box\phi)\leq\mathcal{E}(w,t,\phi)$ follows anyway by properties of $\odot$.
\begin{lemma}\label{lem:canonmodgjlcswelldef}
$\mathfrak{M}^c(\mathcal{GJL}_{CS})$ is a well-defined $\mathsf{GJL_{CS}}$-model.
\end{lemma}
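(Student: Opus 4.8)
The plan is to verify, in turn, all defining requirements of a $\mathsf{GJL_{CS}}$-model: that $W^c$ is non-empty, that $R^c,\mathcal{E}^c,e^c$ are well-typed (i.e.\ land in $[0,1]$), that $\mathcal{E}^c$ obeys the closure conditions (i), (ii) of a $\mathsf{GJ}$-model, that the model respects $CS$, and finally the frame conditions specific to whichever of the six systems $\mathcal{GJL}_{CS}$ denotes. Well-typedness is immediate from the definitions, since each $v\in W^c$ is an evaluation into $[0,1]$ and $R^c$ is crisp. For non-emptiness, I would argue that $(Th_{\mathcal{GJL}_{CS}})^\star$ is $1$-satisfiable over $\mathcal{G}$: were it not, strong standard completeness (Thm.~\ref{thm:gssc}) would give $(Th_{\mathcal{GJL}_{CS}})^\star\vdash_\mathcal{G}\bot$, whence Lem.~\ref{lem:modalproptransinv} (with $\Gamma=\varnothing$ and $\bot^\star=\bot$) would force $\vdash_{\mathcal{GJL}_{CS}}\bot$, contradicting the soundness of the system. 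Any satisfying evaluation is then a point of $W^c$.

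The engine for all remaining conditions is the same: every axiom and theorem of $\mathcal{GJL}_{CS}$ has its $\star$-image in $(Th_{\mathcal{GJL}_{CS}})^\star$, and by definition every $v\in W^c$ sends $(Th_{\mathcal{GJL}_{CS}})^\star$ to $1$; since in G\"odel logic $v(A\to B)=1$ iff $v(A)\le v(B)$, and $v(A\to(B\to C))=1$ iff $v(A)\odot v(B)\le v(C)$, each relevant axiom collapses to an inequality among the quantities $v(\phi_t)=\mathcal{E}^c(t,\phi,v)$ and $v(\phi^\star)=e^c(v,\phi)$ (the latter by the Truth lemma~\ref{lem:gjlcscanonmodtruth}). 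Applying this to axiom (J) yields $v((\phi\to\psi)_t)\odot v(\phi_s)\le v(\psi_{t\cdot s})$, i.e.\ closure condition (i), and applying it to the two instances of (+) yields $\max\{v(\phi_t),v(\phi_s)\}\le v(\phi_{t+s})$, i.e.\ condition (ii) since $\oplus=\max$; both hold in all six systems as they all extend $\mathcal{GJ}_0$. For respect of $CS$, note that $c:\phi\in CS$ gives $\vdash_{\mathcal{GJL}_{CS}}c:\phi$ via the $(CS)$-rule, so $\phi_c=(c:\phi)^\star\in (Th_{\mathcal{GJL}_{CS}})^\star$ and hence $\mathcal{E}^c(c,\phi,v)=v(\phi_c)=1$.

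It remains to install the system-specific frame conditions, each read off from the corresponding axiom by the same mechanism. The factivity axiom (F) gives, for every $v$, $v(\phi_t)\le v(\phi^\star)$ for all $\phi,t$, which is precisely the defining clause making $R^c(v,v)=1$, so reflexivity holds for $\mathcal{GJT}_{CS},\mathcal{GLP}_{CS},\mathcal{GJT}45_{CS}$. The positive-introspection axiom (PI) gives $v(\phi_t)\le v((t:\phi)_{!t})$, i.e.\ the positive introspectivity $\mathcal{E}^c(t,\phi,v)\le\mathcal{E}^c(!t,t:\phi,v)$ directly. The crucial auxiliary observation --- and the step I expect to be the main obstacle --- is that, in the presence of (PI), $R^c(w,v)=1$ already forces $w(\phi_t)\le v(\phi_t)$ for all $\phi,t$: instantiating the defining clause of $R^c(w,v)=1$ at the formula $t:\phi$ with term $!t$ gives $w((t:\phi)_{!t})\le v((t:\phi)^\star)=v(\phi_t)$, which combined with the (PI)-inequality $w(\phi_t)\le w((t:\phi)_{!t})$ yields the claim. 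This immediately delivers monotonicity of $\mathcal{E}^c$ over $R^c$ (the only non-trivial case being $R^c(w,v)=1$), and, chaining $w(\phi_t)\le v(\phi_t)$ with the clause of $R^c(v,u)=1$ applied at $(\phi,t)$, it delivers $w(\phi_t)\le v(\phi_t)\le u(\phi^\star)$, i.e.\ (min-)transitivity of the crisp $R^c$. Thus $\mathcal{GJ}4_{CS}$ and $\mathcal{GLP}_{CS}$ land in $\mathsf{GJ4}$, resp.\ $\mathsf{GLP}$.

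Finally, for the two systems with negative introspection, the axiom (NI) translates (using $\star(\neg(t:\phi))=\neg\phi_t$ and the clause $v(\neg\phi_t)=\,\sim v(\phi_t)$) into $\sim\mathcal{E}^c(t,\phi,v)\le\mathcal{E}^c(?t,\neg t:\phi,v)$, giving negative introspectivity; the accompanying strong-evidence condition is free, having already been recorded in Lem.~\ref{lem:canonmodstrongevid}. Collecting the clauses triggered by the axioms actually present in $\mathcal{GJL}_{CS}$ then places $\mathfrak{M}^c(\mathcal{GJL}_{CS})$ in the matching class $\mathsf{GJL_{CS}}$ in each of the six cases, which is what well-definedness asks for. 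Beyond the transitivity argument, everything is a routine unwinding of the residuation laws of Lem.~\ref{lem:mintnormprop} against the membership $v\in W^c$.
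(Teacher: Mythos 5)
Your proposal is correct and takes essentially the same route as the paper's own proof: membership $v\in W^c$ sends every theorem's $\star$-image to $1$, residuation turns each axiom into the corresponding model condition (closure, $CS$-respect, reflexivity from ($F$), introspectivity from ($PI$) and ($NI$)), the key monotonicity/transitivity step for $R^c$ is the same instantiation of the $R^c$-clause at $t:\phi$ with term $!t$ combined with the ($PI$)-inequality (the paper phrases it contrapositively, you phrase it directly), and strong evidence is quoted from Lem.~\ref{lem:canonmodstrongevid}. The only addition is your explicit non-emptiness check of $W^c$, which the paper leaves implicit; note that deriving a contradiction from $\vdash_{\mathcal{GJL}_{CS}}\bot$ via soundness tacitly requires the model class $\mathsf{GJL_{CS}}$ itself to be non-empty, which is easy but not free.
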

\begin{proof}
Let $\mathfrak{M}^c(\mathcal{GJL}_{CS})=\langle W^c,R^c,\mathcal{E}^c,e^c\rangle$. We divide between the different possibilities for $\mathsf{GJL_{CS}}$:
\begin{description}
\item [$\mathsf{GJ_{CS}}$] We just need to check the three basic conditions on $\mathcal{E}^c$. Let $v\in W^c$ be arbitrary:
\begin{enumerate}[(i)]
\item As $v((Th_{\mathcal{GJ}_{CS}})^\star)=1$, we have $v((\phi\rightarrow\psi)_t\rightarrow (\phi_s\rightarrow\psi_{[t\cdot s]}))=1$, i.e. $v((\phi\rightarrow\psi)_t)\leq v(\phi_s)\Rightarrow v(\psi_{[t\cdot s]})$ and thus\\ $v((\phi\rightarrow\psi)_t)\odot v(\phi_s)\leq v(\psi_{[t\cdot s]})$. Thus, we have
\[
\mathcal{E}^c(v,t,\phi\rightarrow\psi)\odot\mathcal{E}^c(v,s,\phi)\leq\mathcal{E}^c(v,t\cdot s,\psi)
\]
by definition of $\mathfrak{M}^c$.
\item Again as $v((Th_{\mathcal{GJ}_{CS}})^\star)=1$, we have $v(\phi_t\rightarrow\phi_{[t+s]})=1$ and\\ $v(\phi_s\rightarrow\phi_{[t+s]})=1$, i.e. $v(\phi_t)\leq v(\phi_{[t+s]})$ and $v(\phi_s)\leq v(\phi_{[t+s]})$. Thus immediately $\mathcal{E}^c(v,t,\phi),\mathcal{E}^c(v,s,\phi)\leq\mathcal{E}^c(v,t+s,\phi)$, i.e.
\[
\mathcal{E}^c(v,t,\phi)\oplus\mathcal{E}^c(v,s,\phi)\leq\mathcal{E}^c(v,t+s,\phi).
\]
\item Let $c:\phi\in CS$. Then $c:\phi\in Th_{\mathcal{GJ}_{CS}}$ by ($CS$) in $\mathcal{GJ}_{CS}$, i.e. $\phi_c\in (Th_{\mathcal{GJ}_{CS}})^\star$ and thus for any $v\in W^c$, we have $v(\phi_c)=1$, i.e. $\mathcal{E}^c(v,c,\phi)=1$ for all $v\in W^c$.
\end{enumerate}
\item [$\mathsf{GJT_{CS}}$] We have $R^c(v,v)=1$ iff $\forall\phi\in\mathcal{L}_J:\forall t\in Jt: v(\phi_t)\leq v(\phi^\star)$ which follows, as $v((Th_{\mathcal{GJT}_{CS}})^\star)=1$, i.e. we have that $v(\phi_t\rightarrow\phi^\star)=1$ by axiom ($F$), i.e. $v(\phi_t)\Rightarrow v(\phi^\star)=1$, thus $v(\phi_t)\leq v(\phi^\star)$ for all $t\in Jt$ and any $\phi\in\mathcal{L}_J$. The rest follows as in the $\mathsf{GJ_{CS}}$-case.
\item [$\mathsf{GJ4_{CS}}$] We check the three additional conditions of $\mathsf{GJ4}$-models from Def. \ref{def:j4lpmodels}. The rest follows similarly to the $\mathsf{GJ_{CS}}$-case.
\begin{enumerate}[(i)]
\item Let $v,w\in W^c$. If $w(\phi_t)=\mathcal{E}^c(w,t,\phi)\leq\mathcal{E}^c(v,t,\phi)=v(\phi_t)$, the inequality is immediately satisfied. Thus, suppose that $w(\phi_t)>v(\phi_t)$. But as $w\in W^c$, we have $w(\phi_t)\leq w((t:\phi)_{!t})$, i.e. 
\[
v(\phi_t)<w(\phi_t)\leq w((t:\phi)_{!t}),
\]
i.e. $v(\phi_t)<w((t:\phi)_{!t})$. Thus 
\[
\exists\psi\in\mathcal{L}_J, s\in Jt:v(\psi^\star)< w(\psi_s).
\]
Thus, we have $R^c(w,v)=0$ in this case. 
\item Let $w,v,u\in W^c$. As $R^c$ is crisp, we have that either\\ $R^c(w,v)\odot R^c(v,u)=0$ or $R^c(w,v)\odot R^c(v,u)=1$. For the former, the inequality is trivially satisfied. Thus suppose $R^c(w,v)\odot R^c(v,u)=1$, i.e. $R^c(w,v)=R^c(v,u)=1$ and thus
\[
\forall\phi\in\mathcal{L}_J:\forall t\in Jt:w(\phi_t)\leq v(\phi^\star)
\]
and
\[
\forall\phi\in\mathcal{L}_J:\forall t\in Jt:v(\phi_t)\leq u(\phi^\star).
\]
Let $\phi\in\mathcal{L}_J$ and $t\in Jt$ be arbitrary. Then, by monotonicity, i.e. item (i), we have that $\mathcal{E}^c(w,t,\phi)\odot R^c(w,v)\leq\mathcal{E}^c(v,t,\phi)$, i.e. as $R^c(w,v)=1$ we have $w(\phi_t)=\mathcal{E}^c(w,t,\phi)\leq\mathcal{E}^c(v,t,\phi)=v(\phi_t)$. Thus, we have $w(\phi_t)\leq v(\phi_t)\leq u(\phi^\star)$. The claim follows as $\phi$ and $t$ were arbitrary. 
\item For any $w\in W^c$, as $w((Th_{\mathcal{GJ}4_{CS}})^\star)=1$, we have $w(\phi_t\rightarrow (t:\phi)_{!t})=1$ for any $\phi\in\mathcal{L}_J,t\in Jt$, i.e.
\[
\mathcal{E}^c(w,t,\phi)=w(\phi_t)\leq w((t:\phi)_{!t})=\mathcal{E}^c(w,!t,t:\phi)
\]
for any $\phi\in\mathcal{L}_J,t\in Jt$.
\end{enumerate}
\item [$\mathsf{GLP_{CS}}$] For $\mathcal{GLP}_{CS}$, as shown in the $\mathsf{GJT_{CS}}$-case, the presence of the axiom ($F$) makes the accessibility function reflexive. The other properties of a $\mathsf{GLP_{CS}}$-model follow from the $\mathsf{GJ4_{CS}}$-case.
\item [$\mathsf{GJ45_{CS}}$] As of Lem. \ref{lem:canonmodstrongevid}, the canonical model has the strong evidence property in any case. We thus just check the negative introspection property: Let $w\in W^c$, i.e. $w(\neg\phi_t\rightarrow (\neg t:\phi)_{?t})=1$, i.e. $\sim w(\phi_t)\leq w((\neg t:\phi)_{?t})$ and thus $\sim\mathcal{E}^c(w,t,\phi)\leq\mathcal{E}^c(w,?t,\neg t:\phi)$ for any $\phi\in\mathcal{L}_J,t\in Jt$. The other properties follow from the $\mathsf{GJ4_{CS}}$-case.
\item [$\mathsf{GJT45_{CS}}$] Again, through the $\mathsf{GJT_{CS}}$-case, the presence of axiom ($F$) makes the accessibility function reflexive. The other properties of a $\mathsf{GJT45_{CS}}$-model thus follows from the $\mathsf{GJ45_{CS}}$-case.
\end{description}
\end{proof}
Now we are ready to prove the main theorem of the paper, establishing strong completeness for all the introduced model classes and proof systems.
\begin{theorem}[Completeness of $\mathcal{GJL}_{CS}$]\label{thm:gjlcscompleteness}
For any $\Gamma\cup\{\phi\}\subseteq\mathcal{L}_J$, the following are equivalent:
\begin{enumerate}[(i)]
\item $\Gamma\vdash_{\mathcal{GJL}_{CS}}\phi$,
\item $\Gamma\models_{\mathsf{GJL_{CS}}\leq}\phi$,
\item $\Gamma\models_\mathsf{GJL_{CS}}\phi$,
\item $\Gamma\models_\mathsf{GJL_{CS}c}\phi$.
\end{enumerate}
\end{theorem}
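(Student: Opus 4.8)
The plan is to prove the four-way equivalence by establishing the cycle of implications (i) $\Rightarrow$ (ii) $\Rightarrow$ (iii) $\Rightarrow$ (iv) $\Rightarrow$ (i), where the first three links are essentially free from the machinery already assembled. The implication (i) $\Rightarrow$ (ii) is precisely soundness, which has been verified system-by-system in Lemmas \ref{lem:gjcssoundness}, \ref{lem:gjtcssoundness}, \ref{lem:j4lpcssoundness} and \ref{lem:45soundness}; since $\mathcal{GJL}_{CS}$ ranges over exactly the six systems covered there, one need only note that these lemmas together dispatch every case. The implication (ii) $\Rightarrow$ (iii) is an immediate instance of Lemma \ref{lem:consequenceclassimpl}. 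For (iii) $\Rightarrow$ (iv), I would simply observe that $\mathsf{GJL_{CS}c}$ is by definition a subclass of $\mathsf{GJL_{CS}}$, so an entailment quantifying over all models of the larger class holds in particular when restricted to the crisp submodels.

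The substantial direction, closing the loop, is (iv) $\Rightarrow$ (i), which I would prove contrapositively via the canonical model. Assume $\Gamma\not\vdash_{\mathcal{GJL}_{CS}}\phi$. By Lemma \ref{lem:modalproptransinv} this is equivalent to $\Gamma^\star\cup (Th_{\mathcal{GJL}_{CS}})^\star\not\vdash_\mathcal{G}\phi^\star$, and then by the Strong Standard Completeness of $\mathcal{G}$ (Theorem \ref{thm:gssc}) there is a propositional evaluation $v\in\mathsf{Ev}(\mathcal{L}_0^\star)$ with $v(\Gamma^\star)=1$ and $v((Th_{\mathcal{GJL}_{CS}})^\star)=1$ while $v(\phi^\star)<1$. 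The condition $v((Th_{\mathcal{GJL}_{CS}})^\star)=1$ is exactly what places $v$ in the domain $W^c$ of the canonical model, so $v$ is a genuine world there.

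I would then invoke the Truth lemma \ref{lem:gjlcscanonmodtruth}, giving $e^c(v,\psi)=v(\psi^\star)$ for every $\psi\in\mathcal{L}_J$. From $v(\Gamma^\star)=1$ we obtain $e^c(v,\psi)=1$ for all $\psi\in\Gamma$, i.e. $(\mathfrak{M}^c,v)\models\Gamma$, whereas $e^c(v,\phi)=v(\phi^\star)<1$ yields $(\mathfrak{M}^c,v)\not\models\phi$. By Lemma \ref{lem:canonmodgjlcswelldef} the canonical model is a well-defined $\mathsf{GJL_{CS}}$-model, and crucially $R^c$ takes values only in $\{0,1\}$ by its very definition, so $\mathfrak{M}^c\in\mathsf{GJL_{CS}c}$. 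Hence $v$ witnesses $\Gamma\not\models_\mathsf{GJL_{CS}c}\phi$, which is the contrapositive of (iv) $\Rightarrow$ (i).

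The genuinely hard work is all absorbed into the earlier lemmas, so within this theorem the only delicate point is the (iv) $\Rightarrow$ (i) step and, more precisely, the observation that the canonical model is automatically \emph{crisp}. This is what allows crisp-model entailment (iv) to imply derivability and thereby collapse all four notions simultaneously; without crispness of $R^c$ one would only be able to close the cycle through the full class $\mathsf{GJL_{CS}}$ and would lose the strengthening to crisp models. The remaining care lies in trusting that Lemma \ref{lem:canonmodgjlcswelldef} correctly lands the canonical model in the appropriate model class for each of the six systems, so that the single canonical construction serves uniformly across all instances of $\mathcal{GJL}_{CS}$.
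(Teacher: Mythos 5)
Your proposal is correct and follows essentially the same route as the paper: the chain (i) $\Rightarrow$ (ii) $\Rightarrow$ (iii) $\Rightarrow$ (iv) discharged by the soundness lemmas, Lemma \ref{lem:consequenceclassimpl}, and the subclass observation, and then (iv) $\Rightarrow$ (i) contrapositively via Lemma \ref{lem:modalproptransinv}, strong standard completeness of $\mathcal{G}$, the Truth Lemma \ref{lem:gjlcscanonmodtruth}, and Lemma \ref{lem:canonmodgjlcswelldef}, with the crispness of $R^c$ closing the loop through the crisp model class exactly as in the paper's argument.
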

\begin{proof}
We have (i) $\Rightarrow$ (ii) for each case $\mathcal{GJ}_{CS}$, $\mathcal{GJT}_{CS}$, $\mathcal{GJ}4_{CS}$, $\mathcal{GLP}_{CS}$, $\mathcal{GJ}45_{CS}$, $\mathcal{GJT}45_{CS}$ by Lem. \ref{lem:gjcssoundness}, Lem. \ref{lem:gjtcssoundness}, Lem. \ref{lem:j4lpcssoundness}, Lem. \ref{lem:45soundness} respectively. (ii) $\Rightarrow$ (iii) follows by Lem. \ref{lem:consequenceclassimpl} in any case, and as $\mathsf{GJL_{CS}c}$ is a subclass of $\mathsf{GJL_{CS}}$ for every case, we also have (iii) $\Rightarrow$ (iv). Thus, it suffices to prove (iv) $\Rightarrow$ (i).\\

For this, assume that $\Gamma\not\vdash_{\mathcal{GJL}_{CS}}\phi$. By Lem. \ref{lem:modalproptransinv}, this is equivalent with $\Gamma^\star\cup(Th_{\mathcal{GJL}_{CS}})^\star\not\vdash_\mathcal{G}\phi^\star$. By strong standard completeness of $\mathcal{G}$, there is an evaluation $v:\mathcal{L}_0^\star\to [0,1]$ such that $v(\Gamma^\star\cup(Th_{\mathcal{GJL}_{CS}})^\star)=1$ but $v(\phi^\star)<1$. By the former, we have $v((Th_{\mathcal{GJL}_{CS}})^\star)=1$ and thus $v\in W^c$. By the Truth Lemma \ref{lem:gjlcscanonmodtruth} for $\mathfrak{M}^c(\mathcal{GJL}_{CS})$, we thus have, by $v(\Gamma^\star)=1$, that $e^c(v,\Gamma)=1$ and by $v(\phi^\star)<1$, we have $e^c(v,\phi)<1$. By Lem. \ref{lem:canonmodgjlcswelldef}, $\mathfrak{M}^c(\mathcal{GJL}_{CS})$ is a well-defined accessibility crisp $\mathsf{GJL_{CS}}$-model such that $(\mathfrak{M}^c(\mathcal{GJL}_{CS}),v)\models\Gamma$ but $(\mathfrak{M}^c(\mathcal{GJL}_{CS}),v)\not\models\phi$ for $v\in\mathcal{D}(\mathfrak{M}^c(\mathcal{GJL}_{CS}))$. Thus $\Gamma\not\models_\mathsf{GJL_{CS}c}\phi$.
\end{proof}
We thus find that an analogue of the symmetry property for the accessibility function is not required to establish completeness of $\mathcal{GJ}45_{CS}$ and $\mathcal{GJT}45_{CS}$ w.r.t to their intended semantics, similarly to the classical boolean case.\\

We may also derive various corollaries from the strong completeness theorem.
\begin{definition}
A set $\Gamma\subseteq\mathcal{L}_J$ is called \emph{consistent in} $\mathcal{GJL}_{CS}$, if\\ $\Gamma\not\vdash_{\mathcal{GJL}_{CS}}\bot$.
\end{definition}
\begin{corollary}[Model existence]
Let $\Gamma\subseteq\mathcal{L}_J$. If $\Gamma$ is consistent w.r.t. $\mathcal{GJL}_{CS}$, then $\exists\mathfrak{M}\in\mathsf{GJL_{CS}}, w\in\mathcal{D}(\mathfrak{M}):(\mathfrak{M},w)\models\Gamma$.
\end{corollary}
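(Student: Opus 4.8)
The plan is to derive this directly from the strong completeness theorem (Thm.~\ref{thm:gjlcscompleteness}) by instantiating it at the formula $\bot$, after reformulating consistency and satisfiability as dual notions. First I would fix the relevant notion of consistency: $\Gamma$ being consistent w.r.t.\ $\mathcal{GJL}_{CS}$ means $\Gamma\not\vdash_{\mathcal{GJL}_{CS}}\bot$. The corollary is then most naturally proved in contrapositive form, so I would assume that no $\mathsf{GJL_{CS}}$-model satisfies $\Gamma$ at any world and aim to conclude $\Gamma\vdash_{\mathcal{GJL}_{CS}}\bot$.

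The key observation linking the two sides is that the statement ``there is no $\mathfrak{M}\in\mathsf{GJL_{CS}}$ and $w\in\mathcal{D}(\mathfrak{M})$ with $(\mathfrak{M},w)\models\Gamma$'' is exactly the semantic consequence $\Gamma\models_{\mathsf{GJL_{CS}}}\bot$. Indeed, unfolding the definition of $\models_{\mathsf{GJL_{CS}}}$, this consequence says that for every model and every world, $(\mathfrak{M},w)\models\Gamma$ implies $(\mathfrak{M},w)\models\bot$; but $(\mathfrak{M},w)\models\bot$ can never hold, since $e(w,\bot)=0\neq 1$ by the semantic clause for $\bot$. Hence for a given pair $(\mathfrak{M},w)$ the implication holds precisely when its antecedent fails, i.e.\ when $(\mathfrak{M},w)\not\models\Gamma$, and so the universally quantified implication is equivalent to the non-existence of a satisfying world.

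With this identification in hand, I would invoke the direction (iii)$\Rightarrow$(i) of Thm.~\ref{thm:gjlcscompleteness}: from $\Gamma\models_{\mathsf{GJL_{CS}}}\bot$ we obtain $\Gamma\vdash_{\mathcal{GJL}_{CS}}\bot$, which is exactly inconsistency of $\Gamma$. Reading the contrapositive back, consistency of $\Gamma$ yields a model $\mathfrak{M}\in\mathsf{GJL_{CS}}$ together with a world $w$ such that $(\mathfrak{M},w)\models\Gamma$, as claimed. If desired, one could instead use the equivalence with clause (iv) to produce a \emph{crisp} witnessing model, but this is not required by the statement.

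I do not expect any genuine obstacle here, since all the substantive work is already done in the completeness theorem; the only point that requires care is the dualisation in the second paragraph, namely correctly handling the vacuous case and using that $\bot$ is never locally satisfied, so that the $1$-entailment relation with conclusion $\bot$ captures exactly unsatisfiability of $\Gamma$.
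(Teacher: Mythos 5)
Your proof is correct, but it takes a genuinely different route from the paper's. The paper does not invoke Theorem~\ref{thm:gjlcscompleteness} at all; instead it re-runs the canonical-model argument directly: from $\Gamma\not\vdash_{\mathcal{GJL}_{CS}}\bot$ it applies Lem.~\ref{lem:modalproptransinv} to get $\Gamma^\star\cup(Th_{\mathcal{GJL}_{CS}})^\star\not\vdash_\mathcal{G}\bot$, then H\'ajek's strong standard completeness to obtain an evaluation $v$ with $v(\Gamma^\star\cup(Th_{\mathcal{GJL}_{CS}})^\star)=1$, observes that $v$ is therefore a world of the canonical model $\mathfrak{M}^c(\mathcal{GJL}_{CS})$, and concludes $(\mathfrak{M}^c(\mathcal{GJL}_{CS}),v)\models\Gamma$ via the Truth Lemma~\ref{lem:gjlcscanonmodtruth} and well-definedness (Lem.~\ref{lem:canonmodgjlcswelldef}). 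You instead treat the completeness theorem as a black box and exploit the duality that unsatisfiability of $\Gamma$ in $\mathsf{GJL_{CS}}$ is literally the $1$-entailment $\Gamma\models_{\mathsf{GJL_{CS}}}\bot$ (since $e(w,\bot)=0$ means $\bot$ is never locally satisfied, so the quantified meta-implication collapses to the negation of its antecedent), and then apply direction (iii)$\Rightarrow$(i). Your handling of the vacuous-implication point is careful and correct, and the contraposition is sound. The trade-off: your derivation is shorter and purely formal, needing no access to the internals of the completeness proof; the paper's version exhibits the witnessing model explicitly (the canonical model at the world $v$), which in particular shows the witness can be taken \emph{crisp} --- something your argument also recovers, as you note, by citing clause (iv) instead of (iii). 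Since the nontrivial direction of Theorem~\ref{thm:gjlcscompleteness} is itself proved by exactly the construction the paper repeats here, the two arguments rest on the same underlying machinery; yours simply factors it through the already-stated theorem.
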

\begin{proof}
Suppose $\Gamma$ is consistent in $\mathcal{GJL}_{CS}$, i.e. $\Gamma\not\vdash_{\mathcal{GJL}_{CS}}\bot$ and thus by Thm. \ref{thm:gjlcscompleteness}, $\Gamma\not\models_\mathsf{GJL_{CS}}\bot$, i.e. $\exists\mathfrak{M}\in\mathsf{GJL_{CS}},w\in\mathcal{D}(\mathfrak{M}):(\mathfrak{M},w)\models\Gamma$ directly per definition of $1$-entailment.
\end{proof}
We may also utilize the completeness theorem to show a strong form of conservativity for various G\"odel justification logics, over $\mathcal{G}$.
\begin{corollary}[Conservativity]
Let $\mathcal{GJL}_0\in\{\mathcal{GJ}_0$, $\mathcal{GJT}_0$, $\mathcal{GJ}4_0$, $\mathcal{GLP}_0\}$ and $CS$ be  a constant specification for $\mathcal{GJL}_0$. For any $\Gamma\cup\{\phi\}\subseteq\mathcal{L}_0$, if $\Gamma\vdash_{\mathcal{GJL}_{CS}}\phi$, then $\Gamma\vdash_\mathcal{G}\phi$.
\end{corollary}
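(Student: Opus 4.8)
The plan is to reduce the claim to strong standard completeness of $\mathcal{G}$ by exhibiting, for any propositional valuation satisfying $\Gamma$, a trivial one-world $\mathsf{GJL_{CS}}$-model realizing it, so that the modal layer becomes inert on purely propositional formulas. I would proceed forward as follows.

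First, by the soundness half of the completeness theorem (Thm.~\ref{thm:gjlcscompleteness}, i.e.\ (i)$\Rightarrow$(iii)), the hypothesis $\Gamma\vdash_{\mathcal{GJL}_{CS}}\phi$ yields $\Gamma\models_{\mathsf{GJL_{CS}}}\phi$. It then suffices to establish $\Gamma\models\phi$ as a propositional $1$-entailment, since strong standard completeness of $\mathcal{G}$ (Thm.~\ref{thm:gssc}) converts this directly into $\Gamma\vdash_{\mathcal{G}}\phi$. So let $e\in\mathsf{Ev}(\mathcal{L}_0)$ be arbitrary with $e(\psi)=1$ for all $\psi\in\Gamma$; the goal is to force $e(\phi)=1$.

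I would lift $e$ to a single-world model $\mathfrak{M}=\langle W,R,\mathcal{E},e'\rangle$ by putting $W=\{w\}$, $R(w,w)=1$, $\mathcal{E}(t,\psi,w)=1$ for every $t\in Jt$ and $\psi\in\mathcal{L}_J$, and $e'(w,p)=e(p)$ for $p\in Var$. Verifying that $\mathfrak{M}$ lies in the relevant class is routine and uniform across the four listed systems: the closure conditions (i), (ii) on $\mathcal{E}$, the $CS$-respect condition, monotonicity of $\mathcal{E}$ with respect to $R$, and positive introspectivity all reduce to trivial inequalities among $1$'s, while reflexivity and min-transitivity of $R$ hold because $R(w,w)=1$ on the one-point frame (reflexivity being harmless for $\mathcal{GJ}_{CS}$ and $\mathcal{GJ}4_{CS}$ and required for $\mathcal{GJT}_{CS}$ and $\mathcal{GLP}_{CS}$). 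Since $\Gamma\cup\{\phi\}\subseteq\mathcal{L}_0$ contains no modal subformula, a one-line induction on formula structure gives $e'(w,\psi)=e(\psi)$ for all $\psi\in\mathcal{L}_0$, the clause for $t{:}\psi$ never being invoked. Hence $(\mathfrak{M},w)\models\Gamma$, so $\Gamma\models_{\mathsf{GJL_{CS}}}\phi$ forces $(\mathfrak{M},w)\models\phi$, i.e.\ $e(\phi)=e'(w,\phi)=1$. As $e$ was arbitrary, $\Gamma\models\phi$, and the argument closes.

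The only genuine obstacle — and the reason the negative-introspection systems $\mathcal{GJ}45_{CS}$ and $\mathcal{GJT}45_{CS}$ are left out — is the strong evidence property of Def.~\ref{def:45models}. By the remark following Lem.~\ref{lem:canonmodstrongevid}, strong evidence is equivalent to $\mathcal{E}(t,\psi,w)\le e'(w,\Box\psi)$, which in the reflexive one-point model becomes $\mathcal{E}(t,\psi,w)\le e'(w,\psi)$; with the all-ones choice $\mathcal{E}\equiv 1$ this would force $e'(w,\psi)=1$ for every $\psi$, destroying any counterexample. Thus the construction above cannot be reused for those two systems, and handling them would require a genuinely different model (for the non-reflexive system one could retreat to $R(w,w)=0$, making strong evidence vacuous, but the reflexive case $\mathcal{GJT}45_{CS}$ resists the method entirely), which is why the corollary is stated only for the four systems where the trivial realization is available.
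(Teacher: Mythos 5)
Your proposal is correct and takes essentially the same approach as the paper: the identical one-point reflexive model with all-ones evidence function realizing a propositional valuation, combined with soundness of $\mathcal{GJL}_{CS}$ and strong standard completeness of $\mathcal{G}$ (and the same routine verification of the model conditions for the four listed systems). The only difference is that you argue directly (every valuation satisfying $\Gamma$ is forced to give $e(\phi)=1$) whereas the paper argues contrapositively from a single countermodel, which is an immaterial reorganization of the same proof.
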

\begin{proof}
Suppose $\Gamma\not\vdash_\mathcal{G}\phi$. By strong standard completeness of $\mathcal{G}$, Thm. \ref{thm:gssc}, we have $\Gamma\not\models\phi$, i.e. $\exists \hat{e}\in\mathsf{Ev}(\mathcal{L}_0):\hat{e}(\psi)=1$ for all $\psi\in\Gamma$ but $\hat{e}(\phi)<1$. We now construct a particular $\mathsf{GJL_{CS}}$-model, which encodes this faulty evaluation:\\

Let $\mathfrak{M}=\langle W,R,\mathcal{E},e\rangle$ be defined over
\begin{itemize}
\item $W:=\{w\}$,
\item $R(w,w):=1$,
\item $\mathcal{E}(w,t,\alpha):=1$ for all $t\in Jt$, $\alpha\in\mathcal{L}_J$,
\item $e(w,p):=\hat{e}(p)$ for all $p\in Var$.
\end{itemize}
As $\mathcal{E}(w,t,\alpha)=1$ for any choice of $t$ and $\alpha$, it clearly respects $CS$. Also, as all such entries of the evidence function are equal, we have
\begin{align*}
\mathcal{E}(w,t,\phi\rightarrow\psi)\odot\mathcal{E}(w,s,\phi)&\leq\mathcal{E}(w,t\cdot s,\psi),\\
\mathcal{E}(w,t,\phi)\oplus\mathcal{E}(w,s,\phi)&\leq\mathcal{E}(w,t+s,\phi),\\
\mathcal{E}(w,t,\phi)&\leq\mathcal{E}(w,!t,t:\phi).
\end{align*}
$R$ is trivially reflexive and (min-)transitive. As $W$ is a singleton, we have monotonicity directly as well. Now, we can prove:
\[
\text{For any }\alpha\in\mathcal{L}_0: e(w,\alpha)=\hat{e}(\alpha).
\]
For this, we proceed by induction on the structure of $\alpha$. As an induction base, for $\alpha=p\in Var$, we have $e(w,p)=\hat{e}(p)$ and also $e(w,\bot)=0=\hat{e}(\bot)$ per definition. The induction step for $\land$ and $\rightarrow$ follows from a straightforward application of the induction hypothesis.\\

Now, with $\mathfrak{M}$, we have found (in each case) a $\mathsf{GJL_{CS}}$-model such that $e(w,\psi)=\hat{e}(\psi)=1$ for all $\psi\in\Gamma$ as $\Gamma\subseteq\mathcal{L}_0$ but $e(w,\phi)=\hat{e}(\phi)<1$. Thus, per definition, $\Gamma\not\models_\mathsf{GJL_{CS}}\phi$ and thus by Thm. \ref{thm:gjlcscompleteness}, we have $\Gamma\not\vdash_{\mathcal{GJL}_{CS}}\phi$.
\end{proof}
\section{An alternative semantics over fuzzy Mkrtychev models}
Introduced in \cite{Mkr1997}, Mkrtychev models preceded Kripke-Fitting semantics for justification logics. From their perspective, Mkrtychev models essentially encode the necessary information concerning the justification modalities only via the admissible evidence function. In the following, we present G\"odel-Mkrtychev models for our various G\"odel justification logics for which we prove another strong completeness theorem.
\begin{definition}
A \emph{G\"odel-Mkrtychev model} is a structure $\mathfrak{M}=\langle\mathcal{E},e\rangle$ with
\begin{enumerate}
\item $\mathcal{E}:Jt\times\mathcal{L}_J\to [0,1]$,
\item $e:Var\to [0,1]$,
\end{enumerate}
where we have the following conditions on the corresponding admissible evidence function $\mathcal{E}$:
\begin{enumerate}[(i)]
\item $\mathcal{E}(t,\phi\rightarrow\psi)\odot\mathcal{E}(s,\phi)\leq\mathcal{E}(t\cdot s,\psi)$,
\item $\mathcal{E}(t,\phi)\oplus\mathcal{E}(s,\phi)\leq\mathcal{E}(t+s,\phi)$,
\end{enumerate}
for all $t,s\in Jt$ and $\phi,\psi\in\mathcal{L}_J$.
\end{definition}
In a similar spirit as before, $e$ extends to $\mathcal{L}_J$ via the following recursive rules:
\begin{itemize}
\item $e(\bot)=0$,
\item $e(\phi\land\psi)=e(\phi)\odot e(\psi)$,
\item $e(\phi\rightarrow\psi)=e(\phi)\Rightarrow e(\psi)$,
\item $e(t:\phi)=\mathcal{E}(t,\phi)$.
\end{itemize}
A G\"odel-Mkrtychev model respects a constant specification $CS$ if
\[
\mathcal{E}(c,\phi)=1\text{ for all }c:\phi\in CS.
\]
We denote the class of all G\"odel-Mkrtychev models by $\mathsf{GM}$ and for a class of $\mathsf{GM}$-models $\mathsf{C}$, we denote its subclass of models respecting a constant specification $CS$ by $\mathsf{C_{CS}}$.
\begin{definition}
Let $\mathfrak{M}=\langle\mathcal{E},e\rangle$ be a $\mathsf{GM}$-model and $\Gamma\cup\{\phi\}\subseteq\mathcal{L}_J$. We say
\begin{enumerate}[(i)]
\item $\phi$ \emph{is valid in} $\mathfrak{M}$, written $\mathfrak{M}\models\phi$, iff $e(\phi)=1$,
\item $\Gamma$ \emph{is valid in} $\mathfrak{M}$, written $\mathfrak{M}\models\Gamma$, iff $\forall\psi\in\Gamma:\mathfrak{M}\models\psi$.
\end{enumerate}
For $\mathsf{C}$ a class of $\mathsf{GM}$-models, we say
\begin{enumerate}[(i)]
\setcounter{enumi}{2}
\item $\phi$ \emph{is a consequence of} $\Gamma$ \emph{in} $\mathsf{C}$, written $\Gamma\models_{\mathsf{C}\leq}^M\phi$, iff $\forall\mathfrak{M}\in\mathsf{C}:e(\Gamma):=\inf_{\psi\in\Gamma}\{e(\psi)\}\leq e(\phi)$,
\item $\phi$ \emph{is a $1$-consequence of} $\Gamma$ \emph{in} $\mathsf{C}$, written $\Gamma\models_\mathsf{C}^M\phi$, iff $\forall\mathfrak{M}\in\mathsf{C}:\mathfrak{M}\models\Gamma$ implies $\mathfrak{M}\models\phi$.
\end{enumerate}
\end{definition}
A formula $\phi$ is called $\mathsf{C}$-\emph{valid}, for a class of $\mathsf{GM}$-models $\mathsf{C}$, if $\varnothing\models^M_\mathsf{C}\phi$. In this case, we also just write $\models^M_\mathsf{C}\phi$ similarly as before.
\begin{definition}
We call a G\"odel-Mkrtychev model $\mathfrak{M}=\langle\mathcal{E},e\rangle$ satisfying
\begin{enumerate}
\item $\mathcal{E}(t,\phi)\leq e(\phi)$ for all $t\in Jt,\phi\in\mathcal{L}_J$ a $\mathsf{GMT}$-model,
\item $\mathcal{E}(t,\phi)\leq\mathcal{E}(!t,t:\phi)$ for all $t\in Jt,\phi\in\mathcal{L}_J$ a $\mathsf{GM4}$-model,
\item (1) and (2) a $\mathsf{GMLP}$-model,
\item (2) and $\sim\mathcal{E}(t,\phi)\leq\mathcal{E}(?t,\neg t:\phi)$ for all $t\in Jt,\phi\in\mathcal{L}_J$ a $\mathsf{GM45}$-model,
\item (1) and (4) a $\mathsf{GMT45}$-model.
\end{enumerate}
\end{definition}
Again, in the following, let 
\[
\mathcal{GJL}_0\in\{\mathcal{GJ}_0,\mathcal{GJT}_0,\mathcal{GJ}4_0,\mathcal{GLP}_0,\mathcal{GJ}45_0,\mathcal{GJT}45_0\}
\]
and let $CS$ be a constant specification for $\mathcal{GJL}_0$. Let $\mathsf{GMJL_{CS}}$ represent the associated class of G\"odel-Mkrtychev models respecting that given constant specification $CS$.
\begin{lemma}\label{lem:mkrmodcsvalid}
Every formula that is deduced by the rule ($CS$) is valid in the class of $\mathsf{GMJL_{CS}}$-models.
\end{lemma}
\begin{proof}
Let $\mathfrak{M}=\langle\mathcal{E},e\rangle$ be a $\mathsf{GMJL_{CS}}$-model and let $c:\phi\in CS$. Then, as $\mathfrak{M}$ respects $CS$, we have $\mathcal{E}(c,\phi)=1$, i.e. $e(c:\phi)=1$ per definition for the extended $e$.
\end{proof}
\begin{lemma}[Soundness for $\mathsf{GMJL}$-models]\label{lem:soundnessmkrt}
For any $\Gamma\cup\{\phi\}\subseteq\mathcal{L}_J$:\\ $\Gamma\vdash_{\mathcal{GJL}_{CS}}\phi$ implies $\Gamma\models_{\mathsf{GMJL_{CS}}\leq}^M\phi$.
\end{lemma}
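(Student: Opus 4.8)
The plan is to transfer the soundness arguments used for the Fitting semantics (Lemmas \ref{lem:gjcssoundness}--\ref{lem:45soundness}) to the Mkrtychev setting, observing that the calculus $\mathcal{GJL}_{CS}$ is unchanged and only the class of models differs. First I would reduce to finitely many premises: any derivation $\Gamma\vdash_{\mathcal{GJL}_{CS}}\phi$ uses only some $\psi_1,\dots,\psi_n\in\Gamma$, so repeated application of the deduction theorem gives $\vdash_{\mathcal{GJL}_{CS}}\bigwedge_{i=1}^n\psi_i\rightarrow\phi$. It then suffices to prove $\models^M_{\mathsf{GMJL_{CS}}}\bigwedge_{i=1}^n\psi_i\rightarrow\phi$, for then in any $\mathsf{GMJL_{CS}}$-model we obtain $e(\Gamma)\leq e(\bigwedge_{i=1}^n\psi_i)\leq e(\phi)$ using $\{\psi_1,\dots,\psi_n\}\subseteq\Gamma$, which is exactly $\Gamma\models^M_{\mathsf{GMJL_{CS}}\leq}\phi$.

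By an induction on derivations, proving that every theorem of $\mathcal{GJL}_{CS}$ is $\mathsf{GMJL_{CS}}$-valid reduces to three facts: every axiom is valid, and both ($MP$) and ($CS$) preserve validity. Preservation under ($CS$) is Lemma \ref{lem:mkrmodcsvalid}; preservation under ($MP$) is immediate, since $e(\psi\rightarrow\phi)=1$ means $e(\psi)\leq e(\phi)$ by the description of the residuum, so $e(\psi)=1$ forces $e(\phi)=1$. The remaining content is the validity of the axioms, i.e. the Mkrtychev analogues of Proposition \ref{prop:j+valid}, Proposition \ref{prop:factivityvalid}, and the validity propositions for positive and negative introspection.

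These recomputations are in fact simpler than in the Fitting case, because in any $\mathsf{GM}$-model one has $e(t:\phi)=\mathcal{E}(t,\phi)$ directly from the definition of the extended evaluation; thus strong evidence holds with equality for free, and each modal axiom reduces immediately to a closure or extension condition on $\mathcal{E}$. Concretely, ($J$) follows from condition (i) by properties of the residuum, ($+$) from condition (ii), ($F$) from $\mathcal{E}(t,\phi)\leq e(\phi)$, ($PI$) from $\mathcal{E}(t,\phi)\leq\mathcal{E}(!t,t:\phi)$, and ($NI$) from the chain $e(\neg t:\phi)=\;\sim e(t:\phi)=\;\sim\mathcal{E}(t,\phi)\leq\mathcal{E}(?t,\neg t:\phi)=e(?t:\neg t:\phi)$. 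The propositional axioms (P) receive value $1$ because $e$ evaluates $\bot,\land,\rightarrow$ exactly as a G\"odel assignment (with each $t:\phi$ treated as atomic), so that every (P)-instance is a substitution instance of a G\"odel tautology.

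I would carry out this verification by running through the six systems in turn, as in Lemma \ref{lem:canonmodgjlcswelldef}, checking only the new axiom at each stage and inheriting the others. The single point needing slight attention is negative introspection: in the Fitting proof the strong-evidence property had to be invoked explicitly, whereas here it is built into the equation $e(t:\phi)=\mathcal{E}(t,\phi)$, so no separate appeal is necessary. For this reason I do not expect a genuine obstacle; the argument is essentially the Fitting soundness proof with the modal clauses read off a one-world model whose $\Box$-value is trivialised.
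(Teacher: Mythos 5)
Your proposal is correct and takes essentially the same approach as the paper: the paper's proof likewise reduces the claim to validity of the axioms (leaving the deduction-theorem and ($MP$)/($CS$) scaffolding implicit via Lemma \ref{lem:mkrmodcsvalid} and the phrase ``as before''), and then runs through the six systems with exactly the computations you describe, reading each axiom off the corresponding condition on $\mathcal{E}$ via $e(t:\phi)=\mathcal{E}(t,\phi)$. Your observation that strong evidence is automatic in the Mkrtychev setting, so ($NI$) needs no separate appeal, matches the paper's treatment of the $\mathcal{GJ}45_{CS}$ case.
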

\begin{proof}
We divide between the different cases for $\mathcal{GJL}_{CS}$. Also, we just check the validity of the modal axioms in their respective classes. The rest follows from Lem. \ref{lem:mkrmodcsvalid} as before.
\begin{description}
\item [$\mathcal{GJ}_{CS}$] To see that ($J$) is valid, observe that 
\begin{align*}
e(t:(\phi\rightarrow\psi))\odot e(s:\phi)&=\mathcal{E}(t,\phi\rightarrow\psi)\odot\mathcal{E}(s,\phi)\\
                                         &\leq\mathcal{E}(t\cdot s,\psi)=e([t\cdot s]:\psi).
\end{align*}
Rearrangement follows again by properties of the residuum. To see that $(+)$ is valid, note that 
\begin{align*}
e(t:\phi)&=\mathcal{E}(t,\phi)\\
         &\leq\mathcal{E}(t+s,\phi)=e([t+s]:\phi),
\end{align*}
and similarly for the other version.
\item [$\mathcal{GJT}_{CS}$] Naturally, we have $e(t:\phi)=\mathcal{E}(t,\phi)\leq e(\phi)$, i.e. $e(t:\phi\rightarrow\phi)=1$ by the conditions on $\mathsf{GMT}$-models. The rest follows from the $\mathcal{GJ}_{CS}$-case.
\item [$\mathcal{GJ}4_{CS}$] We have that $e(t:\phi)=\mathcal{E}(t,\phi)\leq\mathcal{E}(!t,t:\phi)=e(!t:t:\phi)$ by the condition of $\mathsf{GM4}$-models. The rest follows again from the $\mathcal{GJ}_{CS}$-case.
\item [$\mathcal{GLP}_{CS}$] This case follows entirely from the $\mathcal{GJT}_{CS}$ and $\mathcal{GJ}4_{CS}$ cases.
\item [$\mathcal{GJ}45_{CS}$] We have
\begin{align*}
e(\neg t:\phi)&=\sim e(t:\phi)\\
              &=\sim\mathcal{E}(t,\phi)\\
              &\leq\mathcal{E}(?t,\neg t:\phi)=e(?t:\neg t:\phi),
\end{align*}
i.e. $e(\neg t:\phi\rightarrow ?t:\neg t:\phi)=1$. The rest follows from the $\mathcal{GJ}4_{CS}$-case.
\item [$\mathcal{GJT}45_{CS}$] Again, the cases for $\mathcal{GJ}45_{CS}$ and $\mathcal{GJT}_{CS}$ directly imply this one.
\end{description}
\end{proof}
\begin{definition}
Let $v\in\mathsf{Ev}(\mathcal{L}_0^\star)$ be such that $v((Th_{\mathcal{GJL}_{CS}})^\star)=1$. We define the \emph{canonical G\"odel-Mkrtychev model of} $\mathcal{GJL}_{CS}$ \emph{w.r.t.} $v$,
\[
\mathfrak{M}^c_v(\mathcal{GJL}_{CS})=\langle\mathcal{E}^c,e^c\rangle,
\]
over
\begin{enumerate}
\item $\mathcal{E}^c(t,\phi):=v(\phi_t)$ for all $\phi\in\mathcal{L}_J$, $t\in Jt$,
\item $e^c(p):=v(p)$ for all $p\in Var$.
\end{enumerate}
\end{definition}
\begin{lemma}\label{lem:mkrtcanontruth}
Let $\mathfrak{M}^c_v(\mathcal{GJL}_{CS})=\langle\mathcal{E}^c,e^c\rangle$ be the canonical G\"odel-Mkrtychev model of $\mathcal{GJL}_{CS}$ w.r.t to $v$. For all $\phi\in\mathcal{L}_J$: $e^c(\phi)=v(\phi^\star)$.
\end{lemma}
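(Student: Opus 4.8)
The plan is to prove the identity $e^c(\phi)=v(\phi^\star)$ by induction on the structure of $\phi$, paralleling the Truth lemma \ref{lem:gjlcscanonmodtruth} for the canonical Fitting-model, but noting in advance that the modal case is now considerably simpler. Throughout I fix $v\in\mathsf{Ev}(\mathcal{L}_0^\star)$ with $v((Th_{\mathcal{GJL}_{CS}})^\star)=1$ and write $\mathfrak{M}^c_v(\mathcal{GJL}_{CS})=\langle\mathcal{E}^c,e^c\rangle$ for the associated canonical G\"odel-Mkrtychev model.

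For the base cases, if $\phi=p\in Var$ then $e^c(p)=v(p)=v(p^\star)$ directly from the definition of $\mathfrak{M}^c_v$ together with $p^\star=p$; and if $\phi=\bot$ then $e^c(\bot)=0=v(\bot)=v(\bot^\star)$, since both $e^c$ and every evaluation send $\bot$ to $0$ and $\bot^\star=\bot$. For the propositional induction step I would treat $\land$ and $\rightarrow$ uniformly: since $\star$ distributes over both connectives and since $e^c$ and $v$ both interpret $\land$ by $\odot$ and $\rightarrow$ by $\Rightarrow$, the induction hypothesis yields $e^c(\psi\rightarrow\chi)=e^c(\psi)\Rightarrow e^c(\chi)=v(\psi^\star)\Rightarrow v(\chi^\star)=v(\psi^\star\rightarrow\chi^\star)=v((\psi\rightarrow\chi)^\star)$, and analogously for $\psi\land\chi$ with $\odot$ in place of $\Rightarrow$.

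The only remaining case is the modal one, $\phi=t:\psi$, and this is where the Mkrtychev setting departs from the Fitting setting. Here the evaluation clause for $t:\psi$ is simply $e^c(t:\psi)=\mathcal{E}^c(t,\psi)$, with no infimum over accessible worlds, and by the definition of the canonical model $\mathcal{E}^c(t,\psi)=v(\psi_t)$. Since $(t:\psi)^\star=\psi_t$ by the definition of $\star$, I obtain $e^c(t:\psi)=v(\psi_t)=v((t:\psi)^\star)$ at once, without any appeal to an accessibility relation, to crispness, or to the theorems of $\mathcal{GJL}_{CS}$.

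Accordingly, the step that was the main obstacle in Lemma \ref{lem:gjlcscanonmodtruth} — bounding $e^c(v,\Box\psi)$ from below so that the minimum defining $e^c(v,t:\psi)$ collapses to $v(\psi_t)$ — disappears entirely here, precisely because the Mkrtychev clause bypasses the $\Box$-modality. Thus the modal case is immediate and the only real content of the argument is the routine propositional bookkeeping of the induction step above; I expect no genuine difficulty, and the proof is essentially a simplification of the corresponding Fitting-model computation.
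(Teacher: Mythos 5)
Your proof is correct and follows exactly the paper's own argument: induction on the structure of $\phi$, with the propositional cases handled by the induction hypothesis and the modal case $\phi=t:\psi$ collapsing immediately to $e^c(t:\psi)=\mathcal{E}^c(t,\psi)=v(\psi_t)=v((t:\psi)^\star)$ by the definitions of the Mkrtychev evaluation clause, the canonical $\mathcal{E}^c$, and $\star$. Your observation that the $\Box$-bounding step from Lemma \ref{lem:gjlcscanonmodtruth} disappears here is precisely the remark the paper itself makes in this case.
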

\begin{proof}
Induction on the structure of $\phi$:
\begin{description}
\item [(IB)] Let $\phi=p\in Var$, then $e^c(p)=v(p)=v(p^\star)$ per definition. If $\phi=\bot$, then $e^c(\bot)=0=v(\bot)=v(\bot^\star)$ per definition for the extension of an evaluation function.
\item [(IS)] We again divide between the different connectives of $\mathcal{L}_J$:

For $\phi=\psi\rightarrow\chi$ and $\phi=\psi\land\chi$, the claim follows again directly from \textbf{(IH)}, as we have e.g. $e^c(\psi\land\chi)=e^c(\psi)\odot e^c(\chi)=v(\psi^\star)\odot v(\chi^\star)=v(\psi^\star\land\chi^\star)=v((\psi\land\chi)^\star)$ and similarly for $\rightarrow$.

In comparison to Lem. \ref{lem:gjlcscanonmodtruth}, the claim for $\phi=t:\psi$ is even more straightforward, as we just have $e^c(t:\psi)=\mathcal{E}^c(t,\psi)=v(\psi_t)=v((t:\psi)^\star)$ per definition.
\end{description}
\end{proof}
\begin{lemma}\label{lem:mkrtcanonwelldef}
$\mathfrak{M}^c_v(\mathcal{GJL}_{CS})$ is a well-defined $\mathsf{GMJL_{CS}}$-model for any choice of $v\in\mathsf{Ev}(\mathcal{L}_0^\star)$ such that $v((Th_{\mathcal{GJL}_{CS}})^\star)=1$.
\end{lemma}
We omit the proof as it is similar to the proof of Lem. \ref{lem:canonmodgjlcswelldef}.
\begin{theorem}\label{thm:mkrtcompletenes}
For any $\Gamma\cup\{\phi\}\subseteq\mathcal{L}_J$, the following are equivalent:
\begin{enumerate}[(i)]
\item $\Gamma\vdash_{\mathcal{GJL}_{CS}}\phi$,
\item $\Gamma\models^M_{\mathsf{GMJL_{CS}}\leq}\phi$,
\item $\Gamma\models^M_\mathsf{GMJL_{CS}}\phi$.
\end{enumerate}
\end{theorem}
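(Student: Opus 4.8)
The plan is to prove the chain (i) $\Rightarrow$ (ii) $\Rightarrow$ (iii) $\Rightarrow$ (i), mirroring the Fitting-model completeness theorem Thm. \ref{thm:gjlcscompleteness} but now in a streamlined form, since G\"odel-Mkrtychev models carry no accessibility function and hence there is no crisp/non-crisp distinction to track (which is why clause (iv) of Thm. \ref{thm:gjlcscompleteness} has no counterpart here). The implication (i) $\Rightarrow$ (ii) is precisely the soundness statement already established in Lem. \ref{lem:soundnessmkrt}. For (ii) $\Rightarrow$ (iii) I would reuse the elementary argument of Lem. \ref{lem:consequenceclassimpl} verbatim in the single-world setting: if $\mathfrak{M}=\langle\mathcal{E},e\rangle\in\mathsf{GMJL_{CS}}$ satisfies $\mathfrak{M}\models\Gamma$, then $e(\psi)=1$ for every $\psi\in\Gamma$, so $e(\Gamma)=\inf_{\psi\in\Gamma}\{e(\psi)\}=1$; combined with $e(\Gamma)\leq e(\phi)$ from (ii) and $e(\phi)\leq 1$, this forces $e(\phi)=1$, i.e. $\mathfrak{M}\models\phi$.

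The substantive direction is (iii) $\Rightarrow$ (i), which I would prove by contraposition using the canonical Mkrtychev model. Assume $\Gamma\not\vdash_{\mathcal{GJL}_{CS}}\phi$. By the translation invariance Lem. \ref{lem:modalproptransinv}, this is equivalent to $\Gamma^\star\cup(Th_{\mathcal{GJL}_{CS}})^\star\not\vdash_\mathcal{G}\phi^\star$ in propositional G\"odel logic. Strong standard completeness of $\mathcal{G}$, Thm. \ref{thm:gssc}, then yields an evaluation $v\in\mathsf{Ev}(\mathcal{L}_0^\star)$ with $v(\Gamma^\star\cup(Th_{\mathcal{GJL}_{CS}})^\star)=1$ but $v(\phi^\star)<1$. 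In particular $v((Th_{\mathcal{GJL}_{CS}})^\star)=1$, so the canonical G\"odel-Mkrtychev model $\mathfrak{M}^c_v(\mathcal{GJL}_{CS})$ with respect to this $v$ is defined, and by Lem. \ref{lem:mkrtcanonwelldef} it is a well-defined $\mathsf{GMJL_{CS}}$-model. The Truth Lemma \ref{lem:mkrtcanontruth} gives $e^c(\psi)=v(\psi^\star)$ for all $\psi\in\mathcal{L}_J$; since $v(\Gamma^\star)=1$ this makes $e^c(\psi)=1$ for each $\psi\in\Gamma$, so $\mathfrak{M}^c_v(\mathcal{GJL}_{CS})\models\Gamma$, whereas $e^c(\phi)=v(\phi^\star)<1$ shows $\mathfrak{M}^c_v(\mathcal{GJL}_{CS})\not\models\phi$. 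Hence $\Gamma\not\models^M_\mathsf{GMJL_{CS}}\phi$, which is the required contrapositive of (iii) $\Rightarrow$ (i).

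Since every ingredient—soundness, the translation lemma, standard completeness of $\mathcal{G}$, the Truth Lemma, and the well-definedness of the canonical model—is already in place, the argument is essentially bookkeeping, and I do not expect a genuine obstacle. The only real content sits in Lem. \ref{lem:mkrtcanonwelldef}, where one must verify that the evidence-function closure conditions (and, case by case, the factivity, positive/negative introspection, and strong-evidence properties) transfer from the theorem set $(Th_{\mathcal{GJL}_{CS}})^\star$ to $\mathcal{E}^c$; but this is cited as a replica of Lem. \ref{lem:canonmodgjlcswelldef}. The point worth flagging is that the clause for $t:\psi$ in the Truth Lemma is strictly simpler than in the Fitting case: with no accessibility relation one has $e^c(t:\psi)=\mathcal{E}^c(t,\psi)=v(\psi_t)=v((t:\psi)^\star)$ directly, so the $\Box$-infimum computation that appeared in Lem. \ref{lem:gjlcscanonmodtruth} disappears entirely.
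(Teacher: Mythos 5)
Your proposal is correct and follows essentially the same route as the paper's own proof: soundness via Lem.~\ref{lem:soundnessmkrt}, the same elementary argument for (ii)~$\Rightarrow$~(iii), and the contrapositive of (iii)~$\Rightarrow$~(i) via Lem.~\ref{lem:modalproptransinv}, strong standard completeness of $\mathcal{G}$, the canonical Mkrtychev model $\mathfrak{M}^c_v(\mathcal{GJL}_{CS})$, and the Truth Lemma~\ref{lem:mkrtcanontruth} together with Lem.~\ref{lem:mkrtcanonwelldef}. No gaps; your added remarks (absence of a crisp clause, the simplified $t{:}\psi$ case) accurately reflect why the Mkrtychev argument is a streamlined version of Thm.~\ref{thm:gjlcscompleteness}.
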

\begin{proof}
(i) $\Rightarrow$ (ii) follows from Lem. \ref{lem:soundnessmkrt} and (ii) $\Rightarrow$ (iii) follows naturally as before. Thus, we show (iii) $\Rightarrow$ (i). For this, suppose $\Gamma\not\vdash_{\mathcal{GJL}_{CS}}\phi$. Thus, by Lem. \ref{lem:modalproptransinv}, we have $\Gamma^\star\cup (Th_{\mathcal{GJL}_{CS}})^\star\not\vdash_{\mathcal{G}}\phi^\star$. By strong standard completeness of $\mathcal{G}$, we have that $\exists v\in\mathsf{Ev}(\mathcal{L}_0^\star):v(\Gamma^\star\cup (Th_{\mathcal{GJL}_{CS}})^\star)=1$ and $v(\phi^\star)<1$. Now, considering $\mathfrak{M}^c_v(\mathcal{GJL}_{CS})=\langle\mathcal{E}^c,e^c\rangle$, we have by the Truth Lemma \ref{lem:mkrtcanontruth}, that $e^c(\Gamma)=1$ and $e^c(\phi)<1$. By Lem. \ref{lem:mkrtcanonwelldef}, we have that $\mathfrak{M}^c_v(\mathcal{GJL}_{CS})$ is a well-defined $\mathsf{GMJL_{CS}}$-model. Thus $\Gamma\not\models_\mathsf{GMJL_{CS}}\phi$.
\end{proof}
\begin{corollary}[Model existence]
Let $\Gamma\subseteq\mathcal{L}_J$. If $\Gamma$ is consistent in $\mathcal{GJL}_{CS}$, then $\exists\mathfrak{M}\in\mathsf{GMJL_{CS}}:\mathfrak{M}\models\Gamma$.
\end{corollary}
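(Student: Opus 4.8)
The plan is to mirror the Model existence corollary established for G\"odel-Fitting models, replacing the Fitting canonical model by the canonical G\"odel-Mkrtychev model $\mathfrak{M}^c_v(\mathcal{GJL}_{CS})$ and reusing the ingredients already assembled for Theorem \ref{thm:mkrtcompletenes}. The argument is a direct contrapositive-free adaptation: from consistency I extract a satisfying propositional evaluation and then transport it through the translation to a genuine Mkrtychev model.

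First I would unfold consistency. That $\Gamma$ is consistent in $\mathcal{GJL}_{CS}$ means $\Gamma\not\vdash_{\mathcal{GJL}_{CS}}\bot$, and by the translation invariance Lemma \ref{lem:modalproptransinv} this is equivalent to $\Gamma^\star\cup(Th_{\mathcal{GJL}_{CS}})^\star\not\vdash_\mathcal{G}\bot$. Invoking strong standard completeness of $\mathcal{G}$ (Theorem \ref{thm:gssc}) then yields $\Gamma^\star\cup(Th_{\mathcal{GJL}_{CS}})^\star\not\models\bot$, i.e. there exists an evaluation $v\in\mathsf{Ev}(\mathcal{L}_0^\star)$ satisfying all of $\Gamma^\star\cup(Th_{\mathcal{GJL}_{CS}})^\star$; in particular $v(\Gamma^\star)=1$ and $v((Th_{\mathcal{GJL}_{CS}})^\star)=1$.

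Next I would use this $v$ to build the canonical Mkrtychev model. Since $v((Th_{\mathcal{GJL}_{CS}})^\star)=1$, the structure $\mathfrak{M}^c_v(\mathcal{GJL}_{CS})=\langle\mathcal{E}^c,e^c\rangle$ is well-defined by construction, and by Lemma \ref{lem:mkrtcanonwelldef} it is in fact a $\mathsf{GMJL_{CS}}$-model. Applying the Truth Lemma \ref{lem:mkrtcanontruth}, for every $\psi\in\Gamma$ I obtain $e^c(\psi)=v(\psi^\star)=1$, so $\mathfrak{M}^c_v(\mathcal{GJL}_{CS})\models\psi$ for all $\psi\in\Gamma$, which is exactly $\mathfrak{M}^c_v(\mathcal{GJL}_{CS})\models\Gamma$, as desired.

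There is no substantial obstacle here; the proof is a near-verbatim analogue of the Fitting-model Model existence corollary, and its only genuine content — the admissibility of the canonical Mkrtychev structure and the identity $e^c(\psi)=v(\psi^\star)$ — is already discharged by Lemmas \ref{lem:mkrtcanonwelldef} and \ref{lem:mkrtcanontruth}. The single point worth a moment's care is reading $\not\vdash_\mathcal{G}\bot$ as the existence of a fully satisfying evaluation, which is immediate since $\bot$ is never assigned the value $1$, so that the failure of $\models\bot$ is equivalent to the existence of a model of the premise set.
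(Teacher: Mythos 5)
Your proposal is correct and follows essentially the same route as the paper's own proof: unfolding consistency via Lemma \ref{lem:modalproptransinv}, extracting a satisfying evaluation $v$ from strong standard completeness of $\mathcal{G}$, and then invoking Lemmas \ref{lem:mkrtcanonwelldef} and \ref{lem:mkrtcanontruth} to conclude that $\mathfrak{M}^c_v(\mathcal{GJL}_{CS})$ is a $\mathsf{GMJL_{CS}}$-model satisfying $\Gamma$. Your closing remark that $\not\models\bot$ amounts to the existence of a fully satisfying evaluation (since $e(\bot)=0$ always) is a correct and worthwhile clarification of a step the paper leaves implicit.
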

\begin{proof}
Suppose $\Gamma\not\vdash_{\mathcal{GJL}_{CS}}\bot$, i.e. by Thm. \ref{thm:mkrtcompletenes} $\Gamma\not\models^M_\mathsf{GMJL_{CS}}\bot$, i.e. per definition of $1$-consequence in G\"odel-Mkrtychev, we have that $\exists\mathfrak{M}\in\mathsf{GMJL_{CS}}:\mathfrak{M}\models\Gamma$.
\end{proof}
We may derive a conservativity result for one of the remaining logics easier over the completeness theorem with respect to G\"odel-Mkrtychev models.
\begin{corollary}[Conservativity of $\mathcal{GJ}45_{CS}$]
Let $CS$ be a constant specification for $\mathcal{GJ}45_0$ and let $\Gamma\cup\{\phi\}\subseteq\mathcal{L}_0$. If we have $\Gamma\vdash_{\mathcal{GJ}45_{CS}}\phi$, then $\Gamma\vdash_\mathcal{G}\phi$.
\end{corollary}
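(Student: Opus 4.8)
The plan is to run the same refutation-by-model argument as in the previous conservativity corollary, but over G\"odel-Mkrtychev models and invoking Thm.~\ref{thm:mkrtcompletenes} in place of Thm.~\ref{thm:gjlcscompleteness}. Arguing by contraposition, I would assume $\Gamma\not\vdash_\mathcal{G}\phi$ and apply strong standard completeness of $\mathcal{G}$ (Thm.~\ref{thm:gssc}) to obtain an evaluation $\hat e\in\mathsf{Ev}(\mathcal{L}_0)$ with $\hat e(\psi)=1$ for all $\psi\in\Gamma$ but $\hat e(\phi)<1$. From this I construct the trivial Mkrtychev model $\mathfrak{M}=\langle\mathcal{E},e\rangle$ with $\mathcal{E}(t,\alpha)=1$ for all $t\in Jt,\alpha\in\mathcal{L}_J$ and $e(p)=\hat e(p)$ for all $p\in Var$, and then read off a counterexample to $\Gamma\models^M_\mathsf{GMJL_{CS}}\phi$.

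The verification that $\mathfrak{M}$ is a well-defined $\mathsf{GM45}$-model respecting $CS$ is where the Mkrtychev semantics pays off. Every closure condition becomes trivial once $\mathcal{E}$ is constantly $1$: conditions (i) and (ii) read $1\odot 1\leq 1$ and $1\oplus 1\leq 1$; positive introspectivity reads $\mathcal{E}(t,\phi)=1\leq 1=\mathcal{E}(!t,t:\phi)$; negative introspectivity reads $\sim\mathcal{E}(t,\phi)=0\leq\mathcal{E}(?t,\neg t:\phi)$; and $\mathcal{E}(c,\phi)=1$ shows $CS$ is respected. Crucially, the strong evidence condition $\mathcal{E}(t,\phi)\leq e(t:\phi)$ holds with equality, since in a Mkrtychev model $e(t:\phi)=\mathcal{E}(t,\phi)$ by definition. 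A routine induction on the structure of $\alpha\in\mathcal{L}_0$ — which contains no justification modalities — then shows $e(\alpha)=\hat e(\alpha)$, so $e(\psi)=1$ for all $\psi\in\Gamma$ while $e(\phi)<1$. Hence $\mathfrak{M}$ witnesses $\Gamma\not\models^M_\mathsf{GMJL_{CS}}\phi$, and Thm.~\ref{thm:mkrtcompletenes} yields $\Gamma\not\vdash_{\mathcal{GJ}45_{CS}}\phi$.

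The point of switching to Mkrtychev models — and the reason this corollary is stated separately from the earlier one — is that the strong evidence property is exactly what obstructs the single-world Fitting construction used previously: there $e(w,t:\phi)=\mathcal{E}(t,\phi,w)\odot e(w,\Box\phi)$ collapses to $e(w,\phi)$ in a reflexive one-point frame, so $\mathcal{E}(t,\phi,w)=1\leq e(w,t:\phi)$ would force $e(w,\phi)=1$ for every $\phi$, contradicting $\hat e(\phi)<1$. In the Mkrtychev setting this tension disappears, since $t:\phi$ is evaluated directly as $\mathcal{E}(t,\phi)$, decoupled from the propositional value $e(\phi)$. I therefore do not expect a genuine obstacle: the only mild care needed is to confirm that the choice $\mathcal{E}\equiv 1$ simultaneously satisfies positive introspectivity, negative introspectivity and strong evidence, all of which the computations above make immediate.
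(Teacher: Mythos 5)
Your proposal is correct and follows essentially the same route as the paper: contraposition via strong standard completeness of $\mathcal{G}$, the constant-$1$ evidence function over a G\"odel-Mkrtychev model with $e(p)=\hat e(p)$, trivial verification of the $\mathsf{GM45_{CS}}$ closure conditions, the induction $e(\alpha)=\hat e(\alpha)$ for $\alpha\in\mathcal{L}_0$, and Thm.~\ref{thm:mkrtcompletenes} to conclude $\Gamma\not\vdash_{\mathcal{GJ}45_{CS}}\phi$. Your closing observation about why the single-world reflexive Fitting construction breaks down under the strong evidence condition is also exactly the reason the paper switches to Mkrtychev semantics for this case.
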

\begin{proof}
Let $\Gamma\not\vdash_\mathcal{G}\phi$. By Thm. \ref{thm:gssc}, we have $\Gamma\not\models\phi$, i.e. $\exists\hat{e}\in\mathsf{Ev}(\mathcal{L}_0):\hat{e}(\psi)=1$ for all $\psi\in\Gamma$ with $\hat{e}(\phi)<1$. We consider the following $\mathsf{GM45_{CS}}$-model $\mathfrak{M}=\langle\mathcal{E},e\rangle$:
\begin{itemize}
\item $\mathcal{E}(t,\alpha)=1$ for all $t\in Jt$ and $\alpha\in\mathcal{L}_J$,
\item $e(p)=\hat{e}(p)$ for $p\in Var$.
\end{itemize}
$\mathcal{E}$ clearly respects $CS$ as before and it naively satisfies the sum and application laws for basic $\mathsf{GM}$-models. Also, $\mathcal{E}(t,\phi)\leq\mathcal{E}(!t,t:\phi)$ trivially follows and similarly direct, we have $\sim\mathcal{E}(t,\phi)=0\leq 1=\mathcal{E}(?t,\neg t:\phi)$. As before, we may prove $e(\phi)=\hat{e}(\phi)$ for any $\phi\in\mathcal{L}_0$ and thus we have found a $\mathsf{GM45_{CS}}$-model $\mathfrak{M}$ such that $\mathfrak{M}\models\Gamma$ and $\mathfrak{M}\not\models\phi$. Thus $\Gamma\not\models^M_\mathsf{GM45_{CS}}\phi$ and by the Completeness Theorem \ref{thm:mkrtcompletenes}, we thus have $\Gamma\not\vdash_{\mathcal{GJ}45_{CS}}\phi$.
\end{proof}
A construction of such a counter-model for the remaining logic $\mathcal{GJT}45_{CS}$ seems to be possible as well. However, a concrete initial advance proved itself to be rather complicated through the regularity condition $\mathcal{E}(t,\phi)\leq e(\phi)$ and we thus leave this as future work.\\

As in classical justification logic, we can find a way to identify G\"odel-Mkrtychev with single world G\"odel justification models.
\begin{definition}
Let $\mathfrak{M}=\langle\mathcal{E},e\rangle$ be a G\"odel-Mkrtychev model. 
\begin{enumerate}[(a)]
\item Its \emph{$0$-valued G\"odel justification counterpart model} $\overline{\mathfrak{M}}_0=\langle\{w\},\overline{R},\overline{\mathcal{E}},\overline{e}\rangle$ is defined with
\begin{enumerate}[1.]
\item $\overline{R}(w,w):=0$,
\item $\overline{\mathcal{E}}(w,t,\phi):=\mathcal{E}(t,\phi)$ for all $t\in Jt$, $\phi\in\mathcal{L}_J$,
\item $\overline{e}(w,p):=e(p)$ for all $p\in Var$.
\end{enumerate}
\item Its \emph{$1$-valued G\"odel justification counterpart model} $\overline{\mathfrak{M}}_1=\langle\{w\},\overline{R},\overline{\mathcal{E}},\overline{e}\rangle$ is defined similarly as in (a), where however $\overline{R}(w,w):=1$.
\end{enumerate}
\end{definition}
We find that, for a G\"odel-Mkrtychev model $\mathfrak{M}$, its $0$-valued counterpart model $\overline{\mathfrak{M}}_0$ really captures the content of the evaluation function of $\mathfrak{M}$.
\begin{lemma}\label{lem:zeromodadeq}
For any $\mathsf{GM}$-model $\mathfrak{M}=\langle\mathcal{E},e\rangle$ and its $0$-valued $\mathsf{GJ}$-counterpart model $\overline{\mathfrak{M}}_0=\langle\{w\},\overline{R},\overline{\mathcal{E}},\overline{e}\rangle$, we have $e(\phi)=\overline{e}(w,\phi)$ for any $\phi\in\mathcal{L}_J$.
\end{lemma}
\begin{proof}
We prove this by induction on the structure of $\phi$.
\begin{description}
\item [(IB)] Let $\phi=p\in Var$, then $\overline{e}(w,p)=e(p)$ per definition. Similarly, per definition, we have $\overline{e}(w,\bot)=0=e(\bot)$. 
\item [(IS)] We divide between the different connectives of $\mathcal{L}_J$:

For $\phi=\psi\rightarrow\chi$ and $\phi=\psi\land\chi$, the claim follows again directly from \textbf{(IH)}, as we have e.g. $\overline{e}(w,\psi\land\chi)=\overline{e}(w,\psi)\odot \overline{e}(w,\chi)=e(\psi)\odot e(\chi)=e(\psi\land\chi)$ and similarly for $\rightarrow$.

For $\phi=t:\psi$, we obtain
\begin{align*}
\overline{e}(w,t:\psi)&=(\overline{R}(w,w)\Rightarrow\overline{e}(w,\psi))\odot\overline{\mathcal{E}}(w,t,\psi)\\
                      &=(0\Rightarrow\overline{e}(w,\psi))\odot\overline{\mathcal{E}}(w,t,\psi)\\
                      &=\overline{\mathcal{E}}(w,t,\psi)=\mathcal{E}(t,\psi).
\end{align*}
\end{description}
\end{proof}
We also find, supposing a relatively weak condition on the class of models, that a $1$-valued counterpart of some G\"odel-Mkrtychev model has the same property.
\begin{lemma}\label{lem:onemodadeq}
Let $\mathfrak{M}=\langle\mathcal{E},e\rangle$ be a $\mathsf{GM}$-model, where $\mathcal{E}(t,\phi)\leq e(\phi)$ for all\\ $t\in Jt$, $\phi\in\mathcal{L}_J$. For its $1$-valued counterpart model $\overline{\mathfrak{M}}_1=\langle\{w\},\overline{R},\overline{\mathcal{E}},\overline{e}\rangle$, it holds that $e(\phi)=\overline{e}(w,\phi)$ for all $\phi\in\mathcal{L}_J$.
\end{lemma}
\begin{proof}
We prove this again by induction on the structure of $\phi$. For this, we may proceed as in the proof of Lem. \ref{lem:zeromodadeq} where only the case for $t:\psi$ changes:
\begin{align*}
\overline{e}(w,t:\psi)&=(\overline{R}(w,w)\Rightarrow\overline{e}(w,\psi))\odot\overline{\mathcal{E}}(w,t,\psi) &&\\
                      &=\overline{e}(w,\psi)\odot\overline{\mathcal{E}}(w,t,\psi)&&\text{(as }\overline{R}(w,w)=1)\\
                      &=e(\psi)\odot\mathcal{E}(t,\psi)&&\textbf{(IH)}\\
                      &=\mathcal{E}(t,\psi)=e(t:\psi)&&\text{(as }\mathcal{E}(t,\psi)\leq e(\psi)).
\end{align*}
\end{proof}
The following lemma now states that any such counterpart structure is actually a well-defined model. Even more so, we find that for a G\"odel-Mkrtychev model from one of the basic model classes introduced, either its $0$- or $1$-valued counterpart model is a member of the corresponding class of G\"odel justification models.
\begin{lemma}\label{lem:zeromodprop}
For any $\mathsf{GM}$, $\mathsf{GM4}$, $\mathsf{GM45}$-model $\mathfrak{M}=\langle\mathcal{E},e\rangle$, $\overline{\mathfrak{M}}_0$ is a well-defined $\mathsf{GJ}$, $\mathsf{GJ4}$, $\mathsf{GJ45}$-model, respectively.
\end{lemma}
\begin{proof}
Let $\overline{\mathfrak{M}}_0=\langle\{w\}$, $\overline{R},\overline{\mathcal{E}},\overline{e}\rangle$. We divide between the various cases:
\begin{description}
\item [$\mathsf{GM}$] For any $t\in Jt$, $\phi,\psi\in\mathcal{L}_J$ we obtain
\begin{align*}
\overline{\mathcal{E}}(w,t,\phi\rightarrow\psi)\odot\overline{\mathcal{E}}(w,s,\phi)&=\mathcal{E}(t,\phi\rightarrow\psi)\odot\mathcal{E}(s,\phi)\\
                                                                                    &\leq\mathcal{E}(t\cdot s,\psi)=\overline{\mathcal{E}}(w,t\cdot s,\psi),
\end{align*}
and
\begin{align*}
\overline{\mathcal{E}}(w,t,\phi)\oplus\overline{\mathcal{E}}(w,s,\phi) &=\mathcal{E}(t,\phi)\oplus\mathcal{E}(s,\phi)\\
                                                                       &\leq\mathcal{E}(t+s,\phi)=\overline{\mathcal{E}}(w,t+s,\phi).
\end{align*}
\item [$\mathsf{GM4}$] We have, for any $\phi\in\mathcal{L}_J$, $t\in Jt$ that
\[
\overline{\mathcal{E}}(w,t,\phi)\odot \overline{R}(w,w)=0\leq\overline{\mathcal{E}}(w,t,\phi),
\]
i.e. $\overline{\mathcal{E}}$ is monotone w.r.t. $\overline{R}$. Also, we trivially have that $\overline{R}$ is min-transitive. Lastly, we obtain
\[
\overline{\mathcal{E}}(w,t,\phi)=\mathcal{E}(t,\phi)\leq\mathcal{E}(!t,t:\phi)=\overline{\mathcal{E}}(w,!t,t:\phi).
\]
The rest follows as in the case for $\mathsf{GM}$.
\item [$\mathsf{GM45}$] For any $\phi\in\mathcal{L}_J$, $t\in Jt$, we have that
\[
\sim\overline{\mathcal{E}}(w,t,\phi)=\sim\mathcal{E}(t,\phi)\leq\mathcal{E}(?t,\neg t:\phi)=\overline{\mathcal{E}}(w,?t,\neg t:\phi)
\]
and that
\[
\overline{\mathcal{E}}(w,t,\phi)=\mathcal{E}(t,\phi)=e(t:\phi)=\overline{e}(w,t:\phi)
\]
where the last equality follows from Lem. \ref{lem:zeromodadeq}. The rest follows as in the case of $\mathsf{GM4}$.
\end{description}
\end{proof}
\begin{lemma}\label{lem:onemodprop}
For any $\mathsf{GMT}$, $\mathsf{GMLP}$, $\mathsf{GMT45}$-model $\mathfrak{M}=\langle\mathcal{E},e\rangle$, $\overline{\mathfrak{M}}_1$ is a well-defined $\mathsf{GJT}$, $\mathsf{GLP}$, $\mathsf{GJT45}$-model, respectively.
\end{lemma}
\begin{proof}
Let $\overline{\mathfrak{M}}_1=\langle\{w\}$, $\overline{R},\overline{\mathcal{E}},\overline{e}\rangle$. We again divide between the various cases:
\begin{description}
\item [$\mathsf{GMT}$] Reflexivity follows per definition and the other inequalities follow as in Lem. \ref{lem:zeromodprop} in the case for $\mathsf{GM}$.
\item [$\mathsf{GMLP}$] For any $\phi\in\mathcal{L}_J$, $t\in Jt$, we have that
\[
\overline{\mathcal{E}}(w,t,\phi)\odot \overline{R}(w,w)=\min\{\overline{\mathcal{E}}(w,t,\phi),1\}=\overline{\mathcal{E}}(w,t,\phi),
\]
confirming monotonicity of $\overline{\mathcal{E}}$ w.r.t. $\overline{R}$. $\overline{R}$ is again trivially min-transitive and the rest follows as in the case for $\mathsf{GMT}$ and for $\mathsf{GM4}$ in Lem. \ref{lem:zeromodprop}.
\item [$\mathsf{GMT45}$] Negative introspectivity of $\overline{\mathcal{E}}$ follows as in Lem. \ref{lem:zeromodprop} in the case of $\mathsf{GM45}$ and we obtain
\[
\overline{\mathcal{E}}(w,t,\phi)=\mathcal{E}(t,\phi)=e(t:\phi)=\overline{e}(w,t:\phi),
\]
this time by Lem. \ref{lem:onemodadeq}. The rest follows as in the case for $\mathsf{GMLP}$.
\end{description}
\end{proof}
As a consequence, we obtain that any logic introduced here has the simple finite model property w.r.t. G\"odel justification models. For this, again let $\mathcal{GJL}_0\in\{\mathcal{GJ}_0,\mathcal{GJT}_0,\mathcal{GJ}4_0,\mathcal{GLP}_0,\mathcal{GJ}45_0,\mathcal{GJT}45_0\}$, $CS$ be a constant specification for $\mathcal{GJL}_0$ and $\mathsf{GJL_{CS}}$ and $\mathsf{GMJL_{CS}}$ be the corresponding model classes of $\mathsf{GJ}$-models and $\mathsf{GM}$-models for which we have proved completeness, respectively.
\begin{theorem}
For any $\Gamma\cup\{\phi\}\subseteq\mathcal{L}_J$, if $\Gamma\not\vdash_{\mathcal{GJL}_{CS}}\phi$, then there is a simply finite $\mathsf{GJL_{CS}}$-model $\mathfrak{M}$ and $w\in\mathcal{D}(\mathfrak{M})$ with $(\mathfrak{M},w)\models\Gamma$ but $(\mathfrak{M},w)\not\models\phi$.
\end{theorem}
\begin{proof}
Suppose that $\Gamma\not\vdash_{\mathcal{GJL}_{CS}}\phi$. By Thm. \ref{thm:mkrtcompletenes}, we have $\Gamma\not\models^M_\mathsf{GMJL_{CS}}\nobreak\phi$, i.e. there is a $\mathsf{GMJL_{CS}}$-model $\mathfrak{N}$ such that $\mathfrak{N}\models\Gamma$ but $\mathfrak{N}\not\models\phi$. If
\[
\mathfrak{N}\in\{\mathsf{GM},\mathsf{GM4},\mathsf{GM45}\},
\] 
let $\mathfrak{M}=\overline{\mathfrak{N}}_0$. On the other hand, if 
\[
\mathfrak{N}\in\{\mathsf{GMT}, \mathsf{GMLP}, \mathsf{GMT45}\},
\] 
take $\mathfrak{M}=\overline{\mathfrak{N}}_1$. Let $\mathfrak{N}=\langle\mathcal{E},e\rangle$, then for the latter we have that $\mathcal{E}(t,\phi)\leq e(\phi)$ for all $\phi\in\mathcal{L}_J$, $t\in Jt$. We obtain that $\mathfrak{M}$ either way respects the constant specification $CS$ and that $(\mathfrak{M},w)\models\Gamma$ but $(\mathfrak{M},w)\not\models\phi$. This follows from Lem. \ref{lem:zeromodadeq} and Lem. \ref{lem:onemodadeq} respectively. Also, by Lem. \ref{lem:zeromodprop} or Lem. \ref{lem:onemodprop} respectively, we have that $\mathfrak{M}$ is either way a well-defined $\mathsf{GJL_{CS}}$-model. As $\mathcal{D}(\mathfrak{M})=\{w\}$, $\mathfrak{M}$ is simply finite.
\end{proof}
The above considerations show that the content of the possible-worlds part of G\"odel justification models may be \emph{completely encoded} into the admissible evidence function, similarly to the classical case. This shows in which strong ways justifications add information to epistemic scenarios also in the many-valued setting.

This is, at first, in contrast with standard G\"odel modal logic, where the fundamental proof systems do not enjoy the finite model property w.r.t. their fundamental semantics over G\"odel-Kripke models, which are G\"odel justification models without the admissible evidence function $\mathcal{E}$. However, the function $\mathcal{E}$ in the G\"odel-Fitting models is essentially an infinite object which is why this reduction to finite sets of possible worlds is in a sense really just a \emph{simple} and not a true finite model property.

In the literature for the classical case, there are various ways of reducing the admissible evidence function to a finite object, e.g. by considering so called \emph{evidence bases} (see \cite{Kuz2008}), and it remains as an open problem to further investigate the applicability of those to strengthen the above results.
\section{An application of vague justifications}
T-norm based fuzzy logic is well known for its capabilities of modeling and resolving argumentation scenarios involving vague propositions. As shown by Ghari's treatment in \cite{Gha2016} for an epistemic variant of the famous sorites-style (slippery-slope) paradox, fuzzy justification logics are very well adapted to treat vague justifications for (vague) propositions. 

A difference between G\"odel justification logics and other representatives of the class of fuzzy justification logics is that the G\"odel-case can also handle $[0,1]$-valued Fitting models with a \emph{fuzzy} accessibility function. This proves to be advantageous in modeling certain epistemic scenarios, e.g. the following, where we present a slippery slope argument taking place inside the accessibility function, which is determined by a vague predicate. Consider the following situation:
\begin{displayquote}
Imagine a person in a room. His room has a temperature of 25 $^{\circ}$C, which he considers warm and his feeling of the warmth provides evidence for the room being warm. As he is living in a country near the equator, he can only imagine the room warm, that is any possible different situation of his reality involves the room being warm. He thus has a justified true belief of the room being warm. He agrees, that if one would lower the temperature in his room, it would become gradually less warm, with 0 $^{\circ}$C being $0$ in degree of warmness. He also agrees that a temperature change of $\pm$1 $^{\circ}$C in a warm room will not make it cold. He thus considers a situation possible where the room temperature is 24 $^{\circ}$C and iterating this argument he considers a situation possible where the temperature is 0 $^{\circ}$C, which is however not warm anymore, contradicting his justified true belief.
\end{displayquote}
For a formalization, we use a propositional variable $w$ for the proposition \emph{the room is warm} and a justification variable $x$ for representing the feeling of the agent that the temperature of the actual room is warm.

We may first set up an accessibility-crisp G\"odel-Fitting model
\[
\mathfrak{M}=\langle\{\mathbf{25}, \mathbf{24}, \dots, \mathbf{0}\},R,\mathcal{E},e\rangle
\]
where the worlds correspond to various temperature scenarios of the room, encoded in their name. We may set $\mathcal{E}(\mathbf{25},x,w)=1$ as a natural modeling assumption for the agents feeling of the temperature.\footnote{In favor of simplicity, we ignore the other values of the evidence function (and similarly so for the forthcoming evaluation and accessibility functions).} It is natural, in the different room scenarios $\mathbf{25},\dots,\mathbf{0}$ considered as possible worlds, that the proposition $w$, representing the actual degree of warmness, is modeled to decline in truth value, e.g. by
\[
e(\mathbf{T},w)=\big (\frac{\mathbf{T}}{25}\big )^6
\]
for $\mathbf{T}\in\{\mathbf{25},\dots,\mathbf{0}\}$. For the accessibility function, it is at first reasonable to set $R(\mathbf{25},\mathbf{25})=1$. By further formalizing the assumption from the presented example regarding the accessibility function, and being restricted to the values $0,1$, we are required to set $R(\mathbf{25},\mathbf{24})=1$, as we want that if $R(\mathbf{25},\mathbf{T})>0$, then $R(\mathbf{25},\mathbf{T-1})>0$. Continuing this, we are thus necessarily left with an accessibility function $R$ being characterized by $R(\mathbf{25},\mathbf{T})=1$ for any $\mathbf{T}\in\{\mathbf{25},\dots,\mathbf{0}\}$. We may visualize this model as follows:
\begin{center}
\begin{tikzpicture}
\node[draw,circle] at (4,2) (T) {$\mathbf{25}$};
\node[draw,circle] at (0,0) (N) {$\mathbf{24}$};
\node[draw,circle] at (2,0) (E) {$\mathbf{23}$};
\node[draw,circle] at (4,0) (S) {$\mathbf{22}$};
\node at (6,0) (D) {$\dots$};
\node at (5.25,0.75) (D') {$\dots$};
\node[draw,circle] at (8,0) (O) {$\mathbf{0}$};
\draw (T) -- node[right] {$1$} (N);
\draw (T) -- node[right] {$1$} (E);
\draw (T) -- node[right] {$1$} (S);
\draw (T) -- node[above] {$1$} (O);
\end{tikzpicture}
\end{center}
This yields by the semantics of G\"odel-Fitting models:
\[
e(\mathbf{25},x:w)=\mathcal{E}(\mathbf{25},x,w)\odot e(\mathbf{25},\Box w)=e(\mathbf{0},w)=0.
\]
An accessibility-crisp model is thus not capable, given the premises, to resolve this argument. The problem here is essentially that, although the accessibility-crisp model is able to model the vagueness of the propositions and justifications properly, it is not able to model the vagueness determining the accessibility function.\\

In the case of true $[0,1]$-valued Fitting models however, we can formalize the much more natural assumption (which is still in accordance to the described situation) that the accessibility degrees decrease while still staying positive and even that they decrease much faster than the degree of warmness. We may thus require $R(\mathbf{25},\mathbf{T})>R(\mathbf{25},\mathbf{T-1})>0$. And the many-valuedness of the accessibility function allows a reasonable concrete implementation of this assumption by e.g. setting
\[
R'(\mathbf{25},\mathbf{T})=\big (\frac{\mathbf{T}}{25}\big )^7
\]
for any $\mathbf{T}\in\{\mathbf{25},\dots,\mathbf{0}\}$. This is also in line with the assumption made before that the current world $\mathbf{25}$ is totally accessible, as $R'(\mathbf{25},\mathbf{25})=1$. The resulting model $\mathfrak{M}'=\langle\{\mathbf{25}$, $\dots$, $\mathbf{0}\}$, $R',\mathcal{E}',e'\rangle$, with $\mathcal{E}'=\mathcal{E}$ and $e'=e$, can be visualized as follows:
\begin{center}
\begin{tikzpicture}
\node[draw,circle] at (4,2) (T) {$\mathbf{25}$};
\node[draw,circle] at (0,0) (N) {$\mathbf{24}$};
\node[draw,circle] at (2,0) (E) {$\mathbf{23}$};
\node[draw,circle] at (4,0) (S) {$\mathbf{22}$};
\node at (6,0) (D) {$\dots$};
\node at (5.25,0.75) (D') {$\dots$};
\node[draw,circle] at (8,0) (O) {$\mathbf{0}$};
\draw (T) -- node[right] {$\approx\frac{3}{4}$} (N);
\draw (T) -- node[right] {$\approx\frac{1}{2}$} (E);
\draw (T) -- node[right] {$\approx\frac{2}{5}$} (S);
\draw (T) -- node[above] {$0$} (O);
\end{tikzpicture}
\end{center}
We obtain, as desired: $e'(\mathbf{25},x:w)=\mathcal{E}'(\mathbf{25},x,w)\odot e'(\mathbf{25},\Box w)=1$ as we have
\[
R'(\mathbf{25},\mathbf{T})=\big (\frac{\mathbf{T}}{25}\big )^7\leq\big (\frac{\mathbf{T}}{25}\big )^6=e'(\mathbf{T},w)
\]
for any $\mathbf{T}\in\{\mathbf{25},\dots,\mathbf{0}\}$ and thus $e'(\mathbf{25},\Box w)=1$.
\section{Conclusions and further directions}
In this note, we exhibited fuzzy analogies to concepts from justification logic. More specifically, we replaced classical boolean propositional logic with G\"odel fuzzy logic as a base for the modal extensions of justification logic. With this, we translated the common semantical approach via Kripke-Fitting possible world semantics to a many-valued setting and, in contrast to previous approaches to fuzzy justification logic, we considered models with a fuzzy accessibility function. We then provided Hilbert-style axiomatic proof systems for the resulting analogous model classes of the most common representatives of classical justification logic, proved a strong completeness theorem for all of them and deduced various corollaries in the following. With G\"odel-Mkrtychev models, we also translated a second semantical access point to justification logic besides the Kripke-Fitting approach into the setting of G\"odel logic for which we provided a second strong completeness theorem for the here introduced proof systems, which is another similarity G\"odel justification logic bears with the classical version. A conversion of G\"odel-Mkrtychev models to G\"odel justification models is exhibited at the end.\\

However, this paper is only one of a few regarding the topic of fuzzy justification logics and there remain a lot of interesting questions yet still to be answered. In the following, we give pointers to some possible directions.
\subsection{Forgetful projection and realization}
Very prominent results in the classical case are the realization and projection theorems by Artemov, relating a justification logic to a classical modal counterpart in the sense that for every theorem of
\begin{enumerate}
\item the classical modal proof calculus, there exists an assignment of justification terms to the occurrences of the standard necessity modality $\Box$ such that the resulting formula is a theorem in the calculus of the justification logic, (Realization),
\item the justification proof calculus, replacing every justification modality by the standard necessity operator $\Box$ results in a theorem of the classical modal calculus, (Forgetful Projection).
\end{enumerate} 
For the systems $\mathcal{G}_\Box$, $\mathcal{G}_\Box+\mathbf{T}_\Box$, $\mathcal{G}_\Box+\mathbf{4}_\Box$ and $\mathcal{G}_\Box+\mathbf{T}_\Box+\mathbf{4}_\Box$ established in \cite{CR2010} and the systems $\mathcal{GJ}_{CS}$, $\mathcal{GJT}_{CS}$, $\mathcal{GJ}4_{CS}$ and $\mathcal{GLP}_{CS}$ respectively, introduced in this paper, the Forgetful Projection property follows immediately. It shall be very interesting to see as of if and how the Realization Theorem can be proved in the case of G\"odel justification logic and standard G\"odel modal logic.
\subsection{Adding the Baaz-Delta and truth constants}
One common extension to G\"odel (or in general fuzzy) logics is the Baaz-Delta operator $\Delta$, introduced by Baaz in \cite{Baa1996} as a unary crisp projection operator stipulated over the truth function $\delta:[0,1]\to[0,1]$ with
\[
\delta(x)=\begin{cases}1, &\text{if }x=1\\0, &\text{otherwise}\end{cases}.
\]
In plain G\"odel logic, this operator is not definable and thus adds expressive strength. Another common extension is the incorporation of countably many truth-value constants into the language, i.e. adding formulas of the form $\bar{c}$ for $c\in C\subseteq [0,1]$ for a countable $C$ and stipulating an evaluation of $\bar{c}$ in every case as its represented value $c$. These extensions, especially in combination with one another, are by now well-studied in the framework of basic propositional mathematical fuzzy logic, see e.g. \cite{EGGN2007}.\\

As an advantage in the case of G\"odel justification logic, besides gaining general expressive strength, it might also be interesting to consider \emph{graded justification assertions}, that is formulas of the type
\begin{align*}
t:_c\phi &:=\bar{c}\rightarrow t:\phi,\\
t:^c\phi &:=t:\phi\rightarrow\bar{c},\\
t\stackrel{c}{:}\phi &:= t:_c\phi\land t:^c\phi,
\end{align*}
for truth constants $\bar{c}$ with the intuitive reading of having \emph{at least}, \emph{at most} and \emph{exactly} a certainty degree of $c$ of regarding $t$ as a justification of $\phi$. These were already studied by Ghari in \cite{Gha2014}, \cite{Gha2016} in the context of justification logic over rational Pavelka logic and considered conceptually different before also by Milnikel in \cite{Mil2014}. The additional presence of the crisp projection $\Delta$ might even create various other possibilities for internal definitions of model-theoretically interesting justification assertions.
\subsection{Using other fuzzy logics as a base}
Among the other prominent representatives for systems of mathematical fuzzy logic, G\"odel logic is in general a well-behaved example (e.g. being the only instance enjoying the classical deduction theorem), as this paper additionally exhibits through the similarity of G\"odel justification logic to many classical cases. However, for future work it might be interesting to consider these other common examples as choices of bases for justification logic. Investigations in this already include Ghari's work \cite{Gha2016}, where he studies the case of using rational Pavelka logic, i.e. \L ukasiewicz logic with truth constants $\bar{r}$ for every $r\in [0,1]\cap\mathbb{Q}$. But also \L ukasiewicz logic alone as well as Product logic shall be very interesting to consider.\footnote{As said before, in \cite{Gha2014}, Ghari already exhibited the basics of some of these various other systems over crisp frames.}\\

As however already exhibited in e.g. \cite{VEG2017}, \cite{BEG2007}, \cite{Vid2015}, these logics prove themselves already to be quite untamed in the context of classical modal operators, as e.g. the modal axiom (K)
\[
\Box(\phi\rightarrow\psi)\rightarrow (\Box\phi\rightarrow\Box\psi)
\]
is no longer valid over the class of all corresponding Kripke models with \emph{fuzzy} accessibility function. It should be interesting to see how these logics cooperate with an extension in the spirit of justification logic, both in fuzzy-framed and crisp-framed models and if they are respectively axiomatizable.
\bibliographystyle{plain}
\bibliography{ref}
\end{document}